\theoremstyle{plain}
\newtheorem{theorem}{Theorem}[section]
\newtheorem{corollary}[theorem]{Corollary}
\newtheorem{lemma}[theorem]{Lemma}
\newtheorem{proposition}[theorem]{Proposition}
\theoremstyle{definition}
\newtheorem{definition}[theorem]{Definition}
\newtheorem{RHP}[theorem]{Riemann-Hilbert Problem}
\theoremstyle{remark}
\newtheorem{remark}[theorem]{Remark}
\numberwithin{figure}{section}
\numberwithin{equation}{section}
\DeclareMathOperator{\ad}{ad}
\DeclareMathOperator{\imag}{Im}
\DeclareMathOperator{\sech}{sech}
\DeclareFontFamily{U}{mathx}{\hyphenchar\font45}
\DeclareFontShape{U}{mathx}{m}{n}{
      <5> <6> <7> <8> <9> <10>
      <10.95> <12> <14.4> <17.28> <20.74> <24.88>
      mathx10
      }{}
\DeclareSymbolFont{mathx}{U}{mathx}{m}{n}
\DeclareMathAccent{\widecheck}{0}{mathx}{"71}
\DeclareMathAccent{\wideparen}{0}{mathx}{"75}
\begin{document}


\title[Global Existence for DNLS]{Global Existence for the Derivative Nonlinear Schr\"{o}dinger Equation with Arbitrary Spectral Singularities}
\author{Robert Jenkins}
\author{Jiaqi Liu}
\author{Peter Perry}
\author{Catherine Sulem}
\address[Jenkins]{Department of Mathematics, Colorado State University, Fort Collins, Colorado 80523-1874}
\address[Liu]{Department of Mathematics, University of Toronto, Toronto, Ontario M5S 2E4, Canada}
\address[Perry]{ Department of Mathematics,  University of Kentucky, Lexington, Kentucky 40506--0027}
\address[Sulem]{Department of Mathematics, University of Toronto, Toronto, Ontario M5S 2E4, Canada }
\date{\today}
\begin{abstract}

We show that the derivative nonlinear Schr\"odinger (DNLS)  equation is globally well-posed in the weighted Sobolev space $H^{2,2}(\mathbb{R})$. 
Our result exploits the complete integrability of DNLS and removes certain spectral conditions on the initial data required by our previous work \cite{JLPS17a},
thanks to Zhou's analysis on spectral singularities in the context of inverse scattering   \cite{Zhou89-2}. 
\end{abstract}
\maketitle
\tableofcontents

%
%
%
%

\newcommand{\purpletext}[1]{{\color{purple} #1}}
\newcommand{\bluetext}[1]{ #1 }								

%
%

\tikzset{->-/.style={decoration={
  markings,
  mark=at position .55 with {\arrow[scale=0.6]{triangle 45}} },postaction={decorate}}
}

%
%

\newcommand{\sidenote}[1]{\marginpar{\scriptsize{\color{purple} #1}}}

\newcommand{\eps}{\varepsilon}
\newcommand{\lam}{\lambda}

\newcommand{\darr}{\downarrow}
\newcommand{\rarr}{\rightarrow}

\newcommand{\Lam}{\Lambda}

\newcommand{\dee}{\partial}
\newcommand{\dbar}{\overline{\partial}}
\newcommand{\qbar}{\overline{q}}

\newcommand{\dotarg}{\, \cdot \, }

\newcommand{\ba}{\breve{a}}
\newcommand{\balpha}{\breve{\alpha}}
\newcommand{\bb}{\breve{b}}
\newcommand{\bbeta}{\breve{\beta}}
\newcommand{\br}{\breve{r}}
\newcommand{\bbC}{\mathbb{C}}

\newcommand{\zetabar}{\overline{\zeta}}

\newcommand{\scrB}{\mathscr{B}}

\newcommand{\C}{\mathbb{C}}
\newcommand{\N}{\mathbb{N}}
\newcommand{\R}{\mathbb{R}}
\newcommand{\Z}{\mathbb{Z}}

\newcommand{\calA}{\mathcal{A}}
\newcommand{\calB}{\mathcal{B}}
\newcommand{\calC}{\mathcal{C}}
\newcommand{\calG}{\mathcal{G}}
\newcommand{\calI}{\mathcal{I}}
\newcommand{\calL}{\mathcal{L}}
\newcommand{\calJ}{\mathcal{J}}
\newcommand{\calM}{\mathcal{M}}
\newcommand{\calS}{\mathcal{S}}
\newcommand{\calT}{\mathcal{T}}
\newcommand{\calQ}{\mathcal{Q}}

\newcommand{\One}{\mathbf{1}}

\newcommand{\kbar}{\overline{k}}
\newcommand{\ubar}{\overline{u}}
\newcommand{\zbar}{\overline{z}}

\newcommand{\bfA}{\mathbf{A}}
\newcommand{\bbR}{\mathbb{R}}
\newcommand{\bfr}{\mathbf{r}}
\newcommand{\bfM}{\mathbf{M}}
\newcommand{\bfN}{\mathbf{N}}
\newcommand{\bft}{\mathbf{t}}

\newcommand{\tildJ}{\widetilde{J}}
\newcommand{\brevcalJ}{\breve{\calJ}}

\newcommand{\dint}{\displaystyle{\int}}

\newcommand{\norm}[2][ ]{\left\Vert #2 \right\Vert_{#1}}
\newcommand{\bigO}[2][ ]{\mathcal{O}_{#1}\left( #2 \right)}

\newcommand{\upmat}[1]{\ttwomat{0}{#1}{0}{0}}
\newcommand{\lowmat}[1]{\ttwomat{0}{0}{#1}{0}}

\newcommand{\twomat}[4]
{
\begin{pmatrix}
#1	&	#2	\\
#3	&	#4
\end{pmatrix}
}


\newcommand{\ttwomat}[4]
{
\begin{pmatrix}
#1	&	#2	\\[3pt]
#3	&	#4
\end{pmatrix}
}

\newcommand{\Twomat}[4]
{
	\left(
			\begin{array}{cc}
			#1 & #2 \\
			\\
			#3 & #4
			\end{array}
	\right)
}

\newcommand{\diagmat}[2]
{
\begin{pmatrix}
#1		&	0	\\
0		&	#2
\end{pmatrix}
}

\newcommand{\offmat}[2]
{
\begin{pmatrix}
0	&	#1		\\
#2	&	0
\end{pmatrix}
}

\newcommand{\twovec}[2]
{
	\left(
		\begin{array}{c}
			#1		\\
			#2
		\end{array}
	\right)
}

\newcommand{\Twovec}[2]
{
	\left(
		\begin{array}{c}
			#1		\\
			\\
			#2
		\end{array}
	\right)
}

%
%

\section{Introduction}

In this paper, we prove global well-posedness of the Cauchy problem for the Derivative Nonlinear Schr\"odinger equation (DNLS)
\begin{equation}
\label{DNLS1}
\left\{
\begin{aligned}
i u_t + u_{xx} -i \eps (|u|^2 u)_x &=0, ~~\eps=\pm1\\
u(x,t=0) &= u_0(x)
\end{aligned}
\right.
\end{equation}
with initial condition $u_0$ in the weighted Sobolev space 
 $$ H^{2,2}(\R) = \left\{ u \in L^2(\R): u''(x), \, x^2 u(x) \in L^2(\R) \right\}.$$
In contrast to previous work using PDE methods \cite{FHI17,HO92,Wu15}, we impose no upper bound on the $L^2$-norm of the initial data (although we require more smoothness and decay than these authors), and in contrast to previous work using completely integrable methods \cite{JLPS17a,JLPS17b,Lee89,LPS15,PSS17}, we make no spectral restrictions to ``generic initial data''  that rule out singularities of the spectral data associated to the initial condition.
We use the complete integrability of DNLS discovered by Kaup and Newell \cite{KN78}. 
 As we will explain, an essential ingredient of our work is  Zhou's approach to inverse scattering with arbitrary spectral singularities \cite{Zhou89-2,Zhou98}, 
building on the work of Beals and Coifman \cite{BC84}; to our knowledge, the present paper constitutes the first application of these techniques to global wellposedness questions for integrable PDE's that involves \emph{no} spectral assumptions on the initial data. This is significant in that Zhou's methods are quite general and are likely applicable to wellposedness questions for other integrable PDE's in one space dimension.

To describe our results more precisely, we recall that the invertible gauge transformation
$$ \calG(u)(x) = u(x)  \bluetext{  ~\exp\left(  i \eps  \int_x^\infty |u(y)|^2 \, dy\right)}$$
maps solutions of  \eqref{DNLS1} to solutions of
\begin{equation}
\left\{
\begin{aligned}
\label{DNLS2}
iq_t + q_{xx} + i\eps q^2 \bar q_x + \frac{1}{2} |q|^4 q &= 0, ~~\eps=\pm1\\
q(x,t=0) &= q_0(x).
\end{aligned}
\right.
\end{equation}
Equation \eqref{DNLS2} is more directly amenable to inverse scattering. 
It is shown in \cite{CKSTT02}
 that $\calG$ is a continuous map from $H^{s}$ to $H^s$, $s>1/2$. It is  straightforward to check that it is a locally Lipschitz continuous map from $H^{2,2}(\R)$ to itself. 
Indeed,  denoting  $\varphi (u)= \int_x^\infty |u(y)|^2 \, dy$  and writing $\calG (u) = u+ \left( e^{i\eps\varphi}-1 \right) u$, it is sufficient to prove  the map 
$u \mapsto e^{i\eps \varphi}$ is Lipschitz continuous from $H^{2,2}(\R)$ into $W^{2,\infty}(\R)$  and apply the Leibnitz rule. In particular, one 
easily prove that 
$$ \|1 -e^{i \eps \int_x^\infty (|u(y)|^2-|v(y)|^2 )} dy \| _{W^{2,\infty} }\l  \lesssim \norm[H^{2,2}]{u-v} $$ where the implied constants may depend on $\norm[H^{2,2}]{u}$ and $\norm[H^{2,2}]{v}$.
Global wellposedness in $H^{2,2}(\R)$ for equations   \eqref{DNLS1} and \eqref{DNLS2} are thus equivalent.
In the following, we fix  
$\eps=-1$,  since solutions of \eqref{DNLS2} with 
$\eps=1$ are mapped to solutions of \eqref{DNLS2} with 
$\eps = -1$ by  $q(x,t)  \mapsto q(-x,t)$.
The main result of the paper is the following theorem:
\begin{theorem}
\label{Theorem-main}
Suppose that $q_0 \in H^{2,2}(\R)$. There exists a unique solution $q(x,t)$ of \eqref{DNLS2} with $q(x,t=0)=q_0$ and
$t \mapsto q(\dotarg,t) \in C([-T,T],H^{2,2}(\R))$ for every $T>0$. Moreover, the map $q_0 \mapsto q$ is Lipschitz continuous from $H^{2,2}(\R)$ to $C([-T,T], H^{2,2}(\R))$ for every $T>0$.
\end{theorem}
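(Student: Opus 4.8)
The plan is to prove Theorem \ref{Theorem-main} via the inverse scattering transform (IST), following the classical scheme: direct scattering $q_0 \mapsto$ scattering data, trivial linear time-evolution of the scattering data, and inverse scattering to recover $q(\cdot,t)$. Concretely, I would organize the proof around a Riemann--Hilbert problem (RHP) for the Kaup--Newell spectral problem whose jump matrix is built from the reflection coefficient $r(\lambda)$ and whose residue conditions encode any discrete spectrum (solitons). The novelty compared to \cite{JLPS17a} is that I do \emph{not} assume $r$ is bounded away from the spectral singularities; I must instead carry Zhou's BC-based framework \cite{Zhou89-2,Zhou98} through the entire argument.

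**Step 1 (direct scattering map).** Establish that the map $q_0 \mapsto (r, \{\text{discrete data}\})$ is well-defined and Lipschitz continuous from $H^{2,2}(\R)$ into an appropriate target space that allows $r$ to vanish or blow up at finitely many points of $\R$ (spectral singularities), while retaining enough regularity/decay of $r$ (roughly $r \in H^{1,1}$ away from the singular set, with controlled local behavior) for the RHP to be solvable. This requires the Beals--Coifman machinery and Zhou's singular-integral estimates to replace the usual vanishing-lemma argument for unique solvability of the direct problem, uniformly in a neighborhood of $q_0$.

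**Step 2 (time evolution and inverse map).** Show that under the DNLS flow the scattering data evolve by an explicit, linear, isospectral law ($r(\lambda) \mapsto r(\lambda) e^{-2i\lambda^2 t + \cdots}$ and analogous phases on the discrete data), so the whole problem reduces to solving the inverse problem with time-dependent, but still admissible, data. Then prove the inverse map --- solving the RHP and reconstructing $q(x,t)$ from the asymptotics of its solution --- is well-defined and Lipschitz from the scattering-data space into $C([-T,T],H^{2,2}(\R))$. Here too, the key point is that Zhou's analysis guarantees unique solvability of the RHP \emph{despite} the spectral singularities, with solution estimates uniform in $(x,t)$ and Lipschitz in the data; the reconstruction formulas then transfer $H^{2,2}$-control by the usual $\dbar$-/Fourier arguments relating decay and smoothness of $q$ to smoothness and decay of $r$. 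Composing the direct map, the linear flow, and the inverse map gives existence, uniqueness (the IST representation is canonical), continuity in $t$, and Lipschitz dependence on $q_0$; undoing the gauge $\calG$ transfers everything back to \eqref{DNLS1}.

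**The main obstacle** is Step 1 together with the solvability half of Step 2: without the genericity assumption one loses the standard vanishing lemma, so establishing \emph{unique} solvability of the associated RHP --- and, crucially, doing so with estimates that are \emph{uniform} in the external parameters $(x,t)$ and \emph{Lipschitz} in the scattering data near the singular points --- is the technical heart of the paper. This is precisely where Zhou's framework \cite{Zhou89-2} enters: one must verify that the DNLS reflection coefficient, with its possible zeros/poles on $\R$, falls within the class for which Zhou's Fredholm-index and symmetry arguments force triviality of the homogeneous RHP, and then propagate that into the requisite analytic estimates on the reconstructed potential. Everything else --- the time evolution of scattering data, the Lipschitz bookkeeping, and the reconstruction estimates away from the singularities --- is by now fairly standard and parallels \cite{JLPS17a}.
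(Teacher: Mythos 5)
Your overall IST scheme (direct map, linear evolution, inverse map, compose and undo the gauge) matches the paper's, and you correctly identify Zhou's work as the key input. But the way you propose to handle the singularities in Step~1 is not what the paper does, and as stated it would fail. You propose to keep the standard RHP on $\R\cup i\R$ with jump built from $r=\bb/a$, to ``allow $r$ to vanish or blow up at finitely many points'' with ``controlled local behavior,'' and to encode solitons by residue conditions. The difficulty is that the zeros of $a$ on $\R$ (spectral singularities) and in $\Omega^-$ (resonances) are not stable under $H^{2,2}$-perturbations of $q_0$: their number, location and order can change, zeros can merge or migrate onto the contour, and there is no Banach space of ``reflection coefficients with prescribed local singular behavior plus discrete data'' in which the direct map is even well defined on all of $H^{2,2}(\R)$, let alone Lipschitz. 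This is exactly the obstruction that forces the genericity assumptions in \cite{JLPS17a}, and your plan does not remove it.

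The paper's actual mechanism is different and is the essential missing idea in your proposal: one \emph{augments the contour} with a circle $\Sigma_\infty$ of radius $R$ large enough to contain all zeros of $a$ and $\ba$ (with $R$ uniform on bounded sets of $H^{2,2}$), and \emph{replaces} the Beals--Coifman solution inside the disc by a different piecewise-analytic function $M^{(2)}$ built from the cutoff potential $q_{x_0}$, which by the smallness condition \eqref{x0-condition} has zero-free scattering data. The resulting jump data --- $r$ on $\R\cup i\R$ outside the circle (where $a,\ba$ are zero-free), $r_0$ inside, and Jost-function ratios on the circle --- are entirely regular, require no residue conditions, and depend Lipschitz-continuously on $q$ (Propositions \ref{lip prop} and \ref{post lip prop}). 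Zhou's vanishing lemma is then applied to this \emph{regular} augmented RHP in the $\zeta$-variable via the symmetry of Proposition \ref{schwarz invariance}; it could not be applied directly to a RHP whose jump matrix blows up on the contour. Without the augmentation your Step~1 target space does not exist and your Step~2 uniqueness argument has nothing regular to act on, so the proof as proposed does not go through.
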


The Cauchy problem for equation \eqref{DNLS1}  is locally well-posed in $H^{1}(\R)$ as well as in weighted spaces $H^{m,0}\cap H^{0,m}$ ($m\ge1$) and it
is  globally well-posed for small  initial data  \cite{ TF80,HO92}. 
More precisely,  
it was proved  in  \cite{HO92} that for any initial condition  $u_0\in H^{1}(\R)$ such that $\| u_0 \|_{L^2} < \sqrt{2\pi}$, global wellposedness holds in $H^1(\R)$. 
The  smallness condition was recently improved  to  $\| u_0 \|_{L^2} < \sqrt{4\pi}$ (or $\|u_0\|_{L^2} = \sqrt{4\pi}$ with additional conditions on initial data)
\cite{Wu15, FHI17}.


The present paper also builds on previous work of the co-authors 
which proved global wellposedness of DNLS for initial conditions $u_0$ in
weighted Sobolev spaces  under some additional  conditions
that exclude
the so-called  \emph{spectral singularities}  \cite{JLPS17a,Liu17,PSS17}.
In this context,  we proved global  well-posedness  for data in an open and dense set of $H^{2,2}(\R)$  which allows finitely many resonances, which refer to  eigenvalues away from the continuous spectrum but no spectral singularities, and also
established the long-time behavior of solutions in the form of  the soliton resolution \cite{JLPS17b}. 
We will discuss precisely in Section \ref{sec:direct}  the meaning of spectral singularities. In the
present paper, we remove all
spectral assumptions on the initial data and obtain global well-posedness  of the DNLS equation for general initial condition in $H^{2,2}(\R)$. 

Our approach is inspired by the work of Zhou, 
who, in a series of papers \cite{Zhou89-2,Zhou95,Zhou98}, developed new tools to construct  direct and inverse scattering maps that are insensitive to singularities of the spectral data.
We emphasize that spectral singularities may affect the long-time behavior of solutions, in the same way that  eigenvalues affect 
the long-time behavior of solutions  through soliton resolution 
{(see \cite{JLPS17b} where the soliton resolution conjecture is proved for generic initial data). In the case of the  focusing cubic 
nonlinear  Schr\"odinger equation,
Kamvissis  \cite{Kamvissis96}  studied the effect of  a single spectral singularity on the large-time behavior of solutions. He showed that 
the latter is limited to 
the region of the  $(x,t)$-plane in which the spectral singularity is close to the point of stationary phase, and there,  slightly modifies the rate of decay.
In a future paper, we will investigate how spectral singularities affect the long-time behavior of DNLS solutions.
A new version of the inverse scattering transform has been recently introduced by Bilman and Miller \cite{BM19} to
deal with arbitrary-order poles and spectral singularities in the context of focusing NLS with non-zero boundary conditions. This method
relies on the initial value problem for the Lax pair  and avoids the use of a cut-off potential. }

 Occurrence of spectral singularities in the spectral problem is not an exceptional phenomenon.  In the context of the focussing NLS equation, Zhou   \cite{Zhou89-2} constructed one example in which Schwartz class potential leads to infinitely many eigenvalues accumulating on the real line to form a spectral singularity and another example where infinitely many spectral singularities accumulate. In Appendix B of \cite{JLPS17a}, we analyzed a  family of potentials of the form $q(x) = A \sech(x) e^{i\phi(x)}$  
 for which one can  explicitly compute the scattering  data, thus illustrating various characterizations of the spectral map.
In particular, we exhibit potentials  for which the associated spectral problem has either no discrete spectrum, or  exactly $n$ eigenvalues and no spectral singularities, or  $n$ eigenvalues and one spectral singularity. 

To explain our methods, we will sketch the completely integrable method for \eqref{DNLS2} as discovered by Kaup and Newell \cite{KN78} in two steps. First, we describe how the method works when the initial data do not support solitons or spectral singularities. Next, we describe how Zhou's method 
\cite{Zhou89-2,Zhou98} can be extended to the DNLS equation to construct global solutions in the presence of solitons and spectral singularities.

\subsection{The Inverse Scattering Method:  No Singularities}

Kaup and Newell \cite{KN78} showed that the flow determined by \eqref{DNLS2} may be linearized by spectral data associated to the linear problem
\begin{equation}
\label{LS}
\frac{d}{dx} \Psi(x,\zeta) = -i\zeta^2 \sigma \Psi + \zeta Q(x) \Psi + P(x) \Psi, \quad \zeta\in \bbR\cup i\bbR
\end{equation}
where $\Psi(x,\zeta)$ is a $2 \times 2$ matrix-valued function of $x$ and 
\begin{equation}
\label{sigma-Q-P}
\begin{gathered}
\sigma=\diagmat{1}{-1}, \\
Q(x) = \offmat{q(x)}{-\overline{q(x)}}, \quad
P(x) =  \bluetext{ \diagmat{p_1}{p_2} } =    \frac{i}{2} \diagmat{|q(x)|^2}{-|q(x)|^2}.
\end{gathered}
\end{equation}
Later, it will be convenient to set $\Psi(x,\zeta) = m(x,\zeta) e^{-ix \zeta^2 \sigma}$, so that $m$ solves 
the equation
\begin{equation}
\label{LS.m}
\frac{d}{dx} m (x,\zeta) = -i\zeta^2 \ad (\sigma) m + \zeta Q(x) m + P(x) m
\end{equation}
where
$$
\ad(\sigma) A  = \sigma A - A \sigma. 
$$
Equation \eqref{LS} admits bounded solutions provided $q \in L^1(\R) \cap L^2(\R)$ and $\zeta \in \R \cup i \R$. There exist unique solutions $\Psi^\pm(x,\zeta)$ of \eqref{LS} satisfying the respective boundary conditions
$$ \lim_{x \rarr \pm \infty} \Psi^\pm(x,\zeta)  e^{i\zeta^2 x \sigma} = I, \quad I= \diagmat{1}{1}. $$
These \emph{Jost solutions} have determinant $1$ and define action-angle variables $a$ and $b$ for the flow \eqref{DNLS2} through the relation
$$
\Psi^+(x,\zeta) = \Psi^-(x,\zeta) \twomat{a(\zeta)}{b(\zeta)}{\bb(\zeta)}{\ba(\zeta)}.
$$
That is, if $q(x,t)$ solves \eqref{DNLS2}, and $a(\zeta,t)$ and $b(\zeta,t)$ are the corresponding scattering data for $q(\cdot,t)$, then
\begin{equation}
\label{evolve-a-b}
\dot{a}(\zeta,t) = 0, \quad \dot{b}(\zeta,t)= -4i\zeta^4 b(\zeta,t).
\end{equation}
Thus, if the map $q \mapsto (a,b)$ can be inverted, one can hope to solve \eqref{DNLS2} via a composition of the direct scattering map $q \rarr (a,b)$, the flow map defined by \eqref{evolve-a-b}, and the inverse map $(a,b) \mapsto q$. 

The functions $a$ and $\ba$ have analytic extensions to the respective regions $\Omega^- = \{ \imag z^2 < 0 \}$ and $\Omega^+ = \{ \imag z^2 > 0\}$ (see Figure \ref{fig-1}). Zeros of $a$ (resp.\ $\ba$) in $\Omega^-$ (resp.\ $\Omega^+$) are associated to soliton solutions of \eqref{DNLS2}, while zeros of $a$ or $\ba$ on $\R \cup i \R$ are called \emph{spectral singularities}. For the moment, we assume that $a$ and $\ba$ are zero-free in their respective regions of definition. This allows us to define the reflection coefficients
\begin{equation}
\label{rba}
r(\zeta) = \bb(\zeta)/a(\zeta), \quad \br(\zeta) = b(\zeta)/\ba(\zeta), \quad \zeta\in \bbR\cup i\bbR. 
\end{equation}
The map $q \mapsto r$ is the \emph{direct scattering map}. One can recover $a$ and $b$ from $r$ by solving a scalar Riemann-Hilbert problem. By symmetry one  has that  that $\br(\zeta)=-\overline{r(\overline{\zeta})}$.

In his thesis, J.-H. Lee \cite{Lee83} formulated the inverse scattering map as a Riemann-Hilbert problem (RHP) in which $r$ and $\br$ enter as jump data for a piecewise analytic function. To describe it,  denote by $\R \cup i\R$  the oriented contour, shown in Figure \ref{fig-Sigma}, that bounds $\Omega^\pm$ with $\Omega^+$ to the left and $\Omega^-$ to the right. An oriented contour that divides $\C$ into two such regions $\Omega^+$ and $\Omega^-$, is called a \emph{complete} contour. 

Denote by $m^\pm$ the \emph{renormalized Jost solutions} $m^\pm = \Psi^\pm e^{ix\zeta^2 \sigma}$.  Let $m^+_{1}$ and $m^+_{2}$ denote the first and second columns of $m^+$, with a similar notation $m^-_{1}, m^-_{2}$ for the columns of $m^-$. From the integral equations \eqref{m1+}--\eqref{m2-},  it is easy to see that, for each $x$,  $m_{1}^-(x,\zeta)$ and $m_{2}^+(x,\zeta)$ extend to analytic functions of $z \in \Omega^+$, while
$m_{1}^+(x,\zeta)$ and $m_{2}^-(x,\zeta)$ extend to analytic functions of $z \in \Omega^-$. From these columns, 
one can construct left and right  \emph{Beals-Coifman solutions} $M(x,z)$ of \eqref{LS.m} which are piecewise analytic for $z \in \C \setminus (\R \cup i\R)$ and normalized so that $\lim_{x \rarr \infty} M(x,z) = I $ (right-normalized, \eqref{M-right}) or $\lim_{x \rarr -\infty} M(x,z) = I$ (left-normalized, \eqref{M-left}). In what follows, we discuss the right-normalized solution. Enforcing these normalizations involves division by $a$ and $\ba$ so any zeros of $a$ and $\ba$ would create new singularities.

The Beals-Coifman solution solves a Riemann-Hilbert problem (RHP) in the $z$ variable. Thus $x$ plays the role of a parameter and, for each $x$, the function $M(x,z)$ is piecewise analytic in $z$ with prescribed asymptotics as $z \rarr \infty$ and prescribed multiplicative jumps along the contour $\R \cup i\R$.






\begin{figure}[H]
\centering
\begin{subfigure}{0.45\textwidth}
\caption{The Regions $\Omega^\pm$ and the Contour $\R \cup i\R$
\label{fig-Sigma}
}

\medskip

\begin{tikzpicture}[scale=0.6]
\draw[thick,->-]	(0,0)	--	(-4,0);
\draw[thick,->-]	(0,0)	--	(4,0);
\draw[thick,->-]	(0,4)	--	(0,0);
\draw[thick,->-]	(0,-4)	--  (0,0);
\node at		(-2,2)		{$\Omega^-$};
\node at 	(2,-2)		{$\Omega^-$};
\node at  	(2,2)		{$\Omega^+$};
\node at		(-2,-2)	{$\Omega^+$};
\node [below] at (3.8,0)		{$\R$};
\node [right]  	at	(0,3.8)		{$i\R$};
\end{tikzpicture}
\end{subfigure}
\quad
\begin{subfigure}{0.45\textwidth}
\caption{Zeros of $a$ and $\ba$ 
\label{fig-Zeros}
}

\medskip

\begin{tikzpicture}[scale=0.6]
\draw[thick,->-]	(0,0)	--	(-4,0);
\draw[thick,->-]	(0,0)	--	(4,0);
\draw[thick,->-]	(0,4)	--	(0,0);
\draw[thick,->-]	(0,-4)	--  (0,0);
\draw[black,fill=black]	(1,1)	circle(0.1cm);
\draw[black,fill=black]	(1,-1)	circle(0.1cm);
\draw[black,fill=black]	(-1,1)	circle(0.1cm);
\draw[black,fill=black]	(-1,-1)	circle(0.1cm);
\node at		(0.5,0)		{$\times$};
\node at		(-0.5,0)		{$\times$};
\node at		(2,2)		{$\ba(\zeta)$};
\node at		(-2,-2)	{$\ba(\zeta)$};
\node at 	(2,-2)		{$a(\zeta)$};
\node at		(-2,2)		{$a(\zeta)$};
\node [below] at (3.8,0)		{$\R$};
\node [right]  	at	(0,3.8)		{$i\R$};
\end{tikzpicture}
\end{subfigure}
\caption{\label{fig-1} }
\end{figure}

More precisely, for each $x$, the piecewise analytic function $M(x,\dotarg)$ 
solves the following Riemann-Hilbert problem.
\begin{RHP}
\label{RHP1}
For each $x \in \R$, find an analytic\footnote{If $a$ has zeros, $M$ is meromorphic and discrete data for each pole must be added to close the problem. For the present, we assume that $a$ and $\ba$ are zero-free.} function $M(x,\cdot): \C \setminus (\R \cup i\R) \rarr SL(2,\C)$
with:
\begin{enumerate}
\item[(i)]		$\lim_{z \rarr \infty} M(x,z) = I $,
\smallskip
\item[(ii)]	$M$ has continuous boundary values $M_\pm$ as  $z\to \zeta\in \bbR\cup i\bbR$ from $\Omega^\pm$, and
\item[(iii)]	$M_\pm$ obey the jump relation 
				$$M_+(x,\zeta) = M_-(x,\zeta) e^{-ix\zeta^2 \ad \sigma} v(\zeta)$$
				where 
				$$e^{-ix\zeta^2\ad \sigma} v(\zeta) = \twomat{1+|r(\zeta)|^2}{e^{-2ix\zeta^2 }r(\zeta)}{- e^{2ix\zeta^2} \br(\zeta)}{1}.  $$
\end{enumerate}
\end{RHP}
The matrix   $ e^{-ix\zeta^2\ad \sigma} v$  is called the \emph{jump matrix} for the RHP \ref{RHP1}. We recover $q(x)$ through the asymptotic formula
\begin{equation}
\label{q-recon}
q(x) = 2i \lim_{z \rarr \infty} zM_{12}(x,z)
\end{equation}
 which may easily be deduced from the large-$z$-expansion for $M(x,z)$ and the fact that $M(x,z)$ satisfies \eqref{LS.m}.

RHP \ref{RHP1} and the reconstruction formula \ref{q-recon} define the \emph{inverse scattering map}.

\subsection{The Inverse Scattering Method:  Singularities}

So far, we have assumed that $a$ and $\ba$ are zero-free; however, zeros of $a$ and $\ba$ do occur for data of physical interest. By the symmetries
\begin{equation} \label{a.sym}
\ba(\zeta) = \overline{a(\zetabar)}, \quad 
a(-\zeta) = a(\zeta), \quad
\end{equation}
zeros of $a$ and $\ba$ in $\C \setminus (\R \cup i\R)$ occur in ``quartets'' as shown in Figure \ref{fig-Zeros}.
These quartets correspond to soliton solutions of \eqref{DNLS2}. 
The further symmetry
\begin{equation}
\label{b.sym}
b(-\zeta) = -b(\zeta), \quad \bb(\zeta) = -\overline{b(\zetabar)}
\end{equation}
and the determinant condition
$$
a(\zeta) \ba(\zeta) - b(\zeta) \bb(\zeta) = 1 
$$
imply that
$$
|a(it)|^2 - |b(it)|^2 = 1
$$
for all real $t$, so $a$ has no zeros on the imaginary axis. However, zeros of $a$ on the real axis may occur and correspond to \emph{spectral singularities}. RHP \ref{RHP1} is no longer solvable since  the jump matrix $v$ now has singularities on the contour $\R \cup i\R$; moreover, any zeros of $a$ and $\ba$ in their domains of analyticity will make the Beals-Coifman solutions meromorphic rather than analytic. 

On the other hand, any zeros of $a$ and $\ba$ lie in the disc $$B(0,R) = \{ z: |z| < R \},$$ where $R$ is determined by $\norm[H^{2,2}]{q}$ (see, for example, \cite[Proposition 3.2.5]{Liu17}). Moreover, for $\norm[H^{2,2}]{q}$ sufficiently small, $a$ and $\ba$ are zero-free on their respective domains. We will say that such a potential has \emph{zero-free scattering data}.

Zhou's insight in \cite{Zhou89-2,Zhou98} is that RHP \ref{RHP1} can be modified in the following way.  First, choose $R$ so large that $a$ and $\ba$ have no zeros in $\C \setminus B(0,R)$, and denote by $\Sigma_R$ the circle of radius $R$ centered at $0$. 

Choose $x_0 >0$ sufficiently large so that the potential
\begin{equation}
\label{cutoff-q}
q_{x_0}(x) = 	\begin{cases}
						0,		&	x \leq x_0\\
						q(x)	&	x > x_0
					\end{cases}
\end{equation}
has zero-free scattering data; a sufficient condition to achieve this is that
$$ \sup_{|z| \leq R} \norm[L^1(x>x_0)]{z Q + P} < 1/2$$ 
(see Section \ref{sec:direct}, \eqref{x0-condition} and the discussion that follows).

 Note that both $x_0$ and $R$ may be chosen \bluetext{  uniformly}  for $q$ in a bounded subset of $H^{2,2}(\R)$. Next, let $M^{(0)}(x,z)$ denote the solution to RHP \ref{RHP1} for $q_{x_0}$.  The function $M^{(0)}$ is analytic in $\C \setminus (\R \cup i\R)$
with continuous boundary values $M^{(0)}_\pm$ on $\R \cup i\R$. 
Indeed,  resonances  and spectral singularities for $q_{x_0}$ are ruled out by the small norm assumption.

\begin{remark}
Although the sharp cutoff potential $q_{x_0}$ is not in the $H^{2,2}$ space, it is in $H^{0,2}$ and we will only need this decay property to construct $H^{2,0}$ scattering data on a bounded set.
\end{remark}

Denote by $M^{(1)}$ the unique solution of the Volterra integral equation
\begin{equation}
\label{M1.def}
 M^{(1)}(x,\zeta) = I + 
	\int_{x_0}^x e^{i(y-x)\zeta^2 \ad\sigma}  
	\left( 
	   \zeta Q(y)M^{(1)} (y,\zeta)+
	   P(y) M^{(1)}(y,\zeta) 
	\right) dy 
\end{equation}
and  define
\begin{equation}
\label{M2.def}
	M^{(2)}(x,\zeta) =M^{(1)}(x,\zeta)e^{-i(x-x_0)\zeta^2\ad\sigma } M^{(0)}(x_0,\zeta).
\end{equation}

Since $M^{(2)}$ and $M^{(0)}$ agree at $x=x_0$, it follows by uniqueness that $M^{(2)}(x,\zeta) = M^{(0)}(x,\zeta)$ for all $x \geq x_0$. We notice that $M^{(1)}(x_0, z)$ is entire in $z$, thus $M^{(2)}(x_0, z)$ and $M^{(0)}(x_0, z)$ share the same domain of analyticity.
Define the contour
\[
	\Sigma = \R \cup i \R \cup \Sigma_\infty, \qquad \Sigma_\infty = \{ |z| = R \}
\]
oriented as in Figure~\ref{aug-1}, and define
\begin{equation}
\label{m-aug}
   \bfM(x,z) = \begin{cases}
     M(x,z),		&   z \in \C \setminus ( B(0,R) \cup \Sigma) \\
     M^{(2)}(x,z),	&   z \in B(0,R) \setminus \Sigma,
   \end{cases}
\end{equation}
The function $M(x,z)$ is piecewise analytic on $\C  \setminus \left( B(0,R) \cup (\R \cup i\R)\right)$ because $a$ and $\ba$ are zero-free for $|z| > R$. 
By construction, the function $M^{(2)}$ is piecewise analytic in $B(0,R) \setminus (\R \cup i\R)$. The new unknown $\bfM(x,z)$
obeys RHP \ref{RHP.zeta}. 

The jump matrix of  the Riemann-Hilbert problem for $\bfM(x,z)$  is unchanged outside the circle $\Sigma_\infty$ but is replaced inside by new jump data that may be explicitly computed from $q_0$ and $q_{x_0}$; see Section \ref{sec:direct} for a full discussion. Since $\bfM(x,\zeta) = M(x,\zeta)$ in a neighborhood of infinity, we can still recover $q$ from the reconstruction formula \eqref{q-recon}. To carry out the analysis, we change variables from $\zeta$ to $\lam=\zeta^2$ and actually analyze RHP  \ref{RHP.lambda}.

\begin{figure}[H]
\begin{subfigure}[t]{0.40\textwidth}
\captionsetup{singlelinecheck=off}
\caption[.]{The augmented contour $$\Sigma = \R \cup i\R \cup \Sigma_\infty$$ in the $\zeta$-plane}

\bigskip

\begin{tikzpicture}[scale=0.42]
\fill[fill=gray!20!] (6,0) -- (4,0)  arc (0:90:4) -- (0,6) -- (6,6) -- (6,0) -- cycle;
\fill[fill=gray!20!] (-6,0) -- (-4,0)  arc (-180:-90:4) -- (0,-6) -- (-6,-6) -- (-6,0) -- cycle;
\fill[fill=gray!20!] (0,4)  arc (90:180:4) -- (0,0) -- (0,4);
\fill[fill=gray!20!] (0,-4)  arc (-90:0:4) -- (0,0) -- (0,-4);
\draw[thick,->-]	(0,0)		--	(0,4);
\draw[thick,->-]	(-4,0)		--	(0,0);
\draw[thick,->-]	(4,0)		--	(0,0);
\draw[thick,->-]	(0,0)		--	(0,-4);
\draw[thick,->-]	(0,-6)		--	(0,-4);
\draw[thick,->-]	(-4,0)		--	(-6,0);
\draw[thick,->-]	(4,0)		--	(6,0);
\draw[thick,->-]	(0,6)		--	(0,4);
\draw[thick,->-]	(0,4)				arc(90:180:4);
\draw[thick,->-]	(0,4)				arc(90:0:4);
\draw[thick,->-]	(0,-4)				arc(270:180:4);
\draw[thick,->-]	(0,-4)				arc(270:360:4);
\node at		(45:4.5)					{\scriptsize{$+$}};
\node at		(45:3.5)					{\scriptsize{$-$}};
\node at		(135:3.5)				{\scriptsize{$+$}};
\node at		(135:4.5)				{\scriptsize{$-$}};
\node at		(225:3.5)				{\scriptsize{$-$}};
\node at 	(225:4.5)				{\scriptsize{$+$}};
\node at		(315:3.5)				{\scriptsize{$+$}};
\node at		(315:4.5)				{\scriptsize{$-$}};
\node[right]	at	(0.2,2)			{\scriptsize{$-$}};
\node[left]		at	(-0.2,2)			{\scriptsize{$+$}};
\node[above]	at	(-2,0.2)			{\scriptsize{$+$}};
\node[below]	at	(-2,-0.2)			{\scriptsize{$-$}};
\node[above]	at	(2,0.2)			{\scriptsize{$-$}};
\node[below] 	at	(2,-0.2)			{\scriptsize{$+$}};
\node[left]		at	(-0.2,-2)			{\scriptsize{$-$}};
\node[right]	at	(0.2,-2)			{\scriptsize{$+$}};
\node[left]		at	(-0.2,5)			{\scriptsize{$-$}};
\node[right]	at	(0.2,5)			{\scriptsize{$+$}};
\node[above]	at	(5,0.2)			{\scriptsize{$+$}};
\node[below]	at	(5,-0.2)			{\scriptsize{$-$}};
\node[left]		at	(-0.2,-5)			{\scriptsize{$+$}};
\node[right]	at	(0.2,-5)			{\scriptsize{$-$}};
\node[above]	at	(-5,0.2)			{\scriptsize{$-$}};
\node[below]	at	(-5,-0.2)			{\scriptsize{$+$}};
\node[above] at (4,4)				{\footnotesize{${\widetilde{\Omega}}_1$}};
\node[above] at (-4,4) 				{\footnotesize{${\widetilde{\Omega}}_2$}};
\node[above] at (-4,-4)			{\footnotesize{${\widetilde{\Omega}}_3$}};
\node[above] at (4, -4)				{\footnotesize{${\widetilde{\Omega}}_4$}};
\node[above] at (30:2)				{\footnotesize{${\widetilde{\Omega}}_5$}};
\node[above] at (150:2) 			{\footnotesize{${\widetilde{\Omega}}_6$}};
\node[below] at (210:2)			{\footnotesize{${\widetilde{\Omega}}_7$}};
\node[below] at (330:2)			{\footnotesize{${\widetilde{\Omega}}_8$}};
\end{tikzpicture}
\label{aug-1}
\end{subfigure}
\qquad
\begin{subfigure}[t]{0.40\textwidth}
\captionsetup{singlelinecheck=off}
\caption[.]{The  augmented contour $$\Gamma=\R \cup \Gamma_\infty$$ in the $\lambda$-plane}

\bigskip

\begin{tikzpicture}[scale=0.42]
\draw[thick,white]	(0,-6) -- (0,6);
\fill[fill=gray!20] (-6,0) -- (-3,0) arc (180:0:3) -- (6,0) -- (6,6) -- (-6,6) -- (-6,0) -- cycle;
\fill[fill=gray!20] (-3,0) arc (-180:0:3) -- (-3,0) -- cycle; 
\draw[black, fill=black] 		(3,0) 		circle [radius=0.15];
\draw [black, fill=black] 		(-3,0) 	circle [radius=0.15];
\draw[thick,->-]		(-3,0) arc(180:0:3);
\draw[thick,->-]		(-3,0)	 arc(180:360:3);
\draw [thick,->-] 		(-6,0) -- (-3,0);
\draw [thick,->-]		(3,0)	--	(-3,0);
\draw [thick,->-] 		(3,0) -- 	(6,0);
\node [below] at (1,-0.2) 		{\scriptsize{$+$}};
\node [above] at (1,0.2) 		{\scriptsize{$-$}};
\node [below] at (5,-0.2) 		{\scriptsize{$-$}};
\node [above] at (5,0.2) 		{\scriptsize{$+$}};
\node [below] at (-5,-0.2) 	{\scriptsize{$-$}};
\node [above] at (-5,0.2) 		{\scriptsize{$+$}};
\node [below] at (0,-0.5) 		{$\Omega_4$};
\node [above] at (0,0.5) 		{$\Omega_3$};
\node [below] at (4,-3) 		{$\Omega_2$};
\node [above] at (4, 3) 		{$\Omega_1$};
\node [above] at (140:3.5)	{$\Gamma_\infty^+$};
\node [below] at (220:3.5) 	{$\Gamma_\infty^-$};
\node[below] at (2.2,0)		{\scriptsize{$S_\infty$}};
\node[below] at (-2.0,0)		{\scriptsize{$-S_{\infty}$}};
\end{tikzpicture}
\label{aug-2}
\end{subfigure}
\caption{
The Augmented Contour $\Gamma$ for the Modified Riemann-Hilbert Problem RHP~\ref{RHP.lambda} and its preimage $\Sigma$ in the $\zeta$-plane. The regions $\Omega_+ = \Omega_1 \cup \Omega_4$ (shaded) and $\Omega_- = \Omega_2 \cup \Omega_3$ lie, respectively, to the left and right of $\Gamma$.
}
\end{figure}
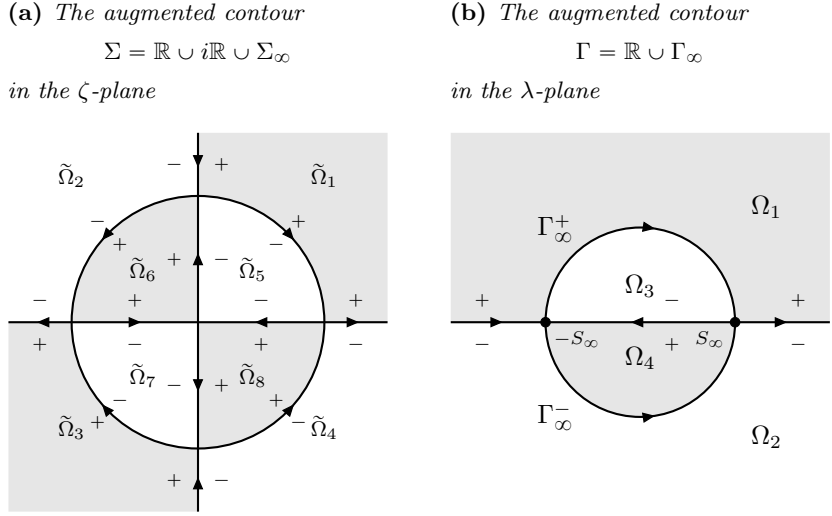

To analyze the direct map (from the given potential $q_0$ to the jump matrix for the augmented contour $\Sigma$) and the inverse map (from the jump matrix to the recovered potential) it is helpful to exploit the symmetry reduction of the spectral problem \eqref{LS} to the spectral variable $\lam = \zeta^2$. Under the map $\zeta \mapsto \zeta^2$, 
 the augmented contour $\Sigma$ is mapped to the contour
\begin{equation}\label{gamma.def}
	\Gamma = \R \cup \Gamma_\infty, \qquad \Gamma_\infty = \{ |z| = S_\infty\}
\end{equation}
with induced orientation as shown in Figure~\ref{aug-2}; the shaded and unshaded regions shown in Figure~\ref{aug-1} are mapped to the shaded and unshaded regions shown in Figure~\ref{aug-2}. The circle $\Sigma_\infty$ is mapped to $\Gamma_\infty$, the circle of radius $S_\infty = R^2$; we let $\Gamma_\infty^\pm = \Gamma_\infty \cap \C^\pm$. The augmented contour $\Gamma$ in Figure~\ref{aug-2} decomposes $\C \setminus \Gamma$ into two sets\footnote{The notation for the sets $\Omega_\pm$, consistent with our use of subscripts for boundary values, should not be confused with the superscripted sets $\Omega^\pm = \{ \pm \Im z^2 > 0 \}$ previously introduced.  
\begin{equation}
	\Omega_+ = \Omega_1 \cup \Omega_4 
	\quad \text{and} \quad
	\Omega_- = \Omega_2 \cup \Omega_3 	
\end{equation}
such that $\Omega_+$ (resp. $\Omega_-$) lies everywhere to the left (resp. right) of $\Gamma$. 
The contour ${\Gamma}$ can be viewed simultaneously as the boundary of $\Omega_+$ or $\Omega_-$, and we will write 
\begin{equation}
	\Gamma_+ = \partial \Omega_+ 
	\quad \text{or} \quad
	{\Gamma}_- = \partial \Omega_- 
\end{equation}
when we want to emphasize either interpretation. 
} Finally, in what follows, we will set
\begin{equation}
\label{R-infty}
\R_\infty = \R \setminus [-S_\infty,S_\infty],
\end{equation}
the part of the contour $\R$ outside the \bluetext{circle $\Gamma_\infty$.}

In the rest of the paper, the letter $z$ is used as  a general notation for  a complex variable off contours, while  $\zeta$ refers the variable on the contour $\Sigma$ and $\lambda=\zeta^2$ to the variable on the contour $\Gamma$.

One can compute the jump data for the Riemann-Hilbert problem on the contour $\Gamma$ explicitly in terms of scattering data for $q$, scattering data for $q_{x_0}$, and normalized Jost solutions for $q$ (see Figure \ref{fig-scattering} and Proposition \ref{matrices-entry}; it is then easy to show that the direct spectral map from $q \in H^{2,2}(\R)$ to these scattering data is continuous in a natural topology on the jump data (see Theorem \ref{thm:scattering data} for a precise statement). 

It remains to show that the scattering data can be time-evolved continuously and that RHP with scattering data as described in Theorem \ref{thm:scattering data} can be uniquely solved and used to recover the potential $q$. To do so, much as in \cite{JLPS17a} and \cite{LPS15}, we show that the Riemann-Hilbert problem in the $\lam$ variables is equivalent to a Riemann-Hilbert problem in the $\zeta$-variable which is uniquely solvable. We then apply Zhou's uniqueness theorem (see Proposition \ref{schwarz invariance} and \cite{Zhou89-1}) to obtain unique solvability. We also need to show that the recovered potential is continuous in the scattering data; this will  follow from  Zhou's results  \cite{Zhou98} and our previous results on the scattering transform in \cite{LPS15}.


Finally, we sketch the content of the paper. 

Section  \ref{sec:direct} is devoted to the direct scattering map. 
In Section \ref{sec:scattering-prob-zeta}, we recall the basic properties of the scattering problem  and Beals-Coifman solutions in the $\zeta$ variables. In Sections \ref{sec:scattering-zeta} and \ref{sec:scattering-lambda}, we construct the scattering data in the $\zeta$ and $\lam$ variables. 
The goal is to choose the scattering data so the inverse  scattering problem will allow a reconstruction formula for the potential.  
For this purpose, we implement Zhou's method to deal with spectral singularities. 
In this setting, the usual Beals-Coifman solutions are changed to piecewise analytic functions  according to \eqref{m-aug}. We give explicit formulas for the  corresponding jump matrices along the augmented contours.\footnote{In \cite{Zhou98}, the non-zero off-diagonal entries are  not calculated explicitly.} We use Zhou's approach \cite{Zhou98} (see also  Trogdon-Olver \cite{TO16}) to address the matching conditions at  the intersection points of the contours and give a full description of the jump matrices and their factorization.
In Section  \ref{time-evol}, we establish the time evolution of the scattering data.  Finally, as shown in \cite[Lemma 3.4]{DZ03}  in the absence of spectral singularities, right and left RHPs are needed  to obtain decay rate of the potential as $x\to \pm\infty$, there are separate left and right augmented RHPs for the same purpose in this paper. 
In section \ref{aux-scatt}, we compute the auxiliary matrix that relates  their corresponding  jump matrices (see \eqref{auxiliary}). This result allows us to focus on the right RHP thereafter.

In Section \ref{sec:unique-solv}, we  show that the RHP with the augmented contour and  the  jump matrices, as derived in Sections   \ref{sec:scattering-zeta} and \ref{sec:scattering-lambda} has a unique solution. The proof follows the lines of the proof given in \cite{Liu17}.
Suppose the RHP in $\lam$ has a null vector $\bfN$, i.e., a solution which satisfies the jump conditions but vanishes as $z \to \infty$. 
This null vector corresponds to a homogeneous solution $\nu$ of the Beals-Coifman integral equation for the RHP, and induces a homogeneous solution $\mu$ to the Beals-Coifman integral equation for the RHP in the $\zeta$ variable 
which is the zero solution due to Zhou's vanishing lemma \cite[Theorem 9.3]{Zhou89-1}.
It follows from Fredholm theory that the Beals-Coifman equation for $\mu$ is uniquely solvable, and hence that the RHP in the $\lam$ variable is also uniquely solvable. 

As in \cite{JLPS17a}, we establish the existence and uniqueness of solutions to the RHP for scattering data
in a larger space  $Y$ (see Definition \ref{def-space-Y}) in order to obtain uniform resolvent estimates  for scattering data in bounded sets of a smaller space.

In Section \ref{sec:inverse}, we   establish the mapping properties of the inverse scattering map and estimate the potential obtained from   the reconstruction formula in the  $\lam$ variable.
This analysis requires another technical step taken from Zhou's method \cite{Zhou98}. As seen  in Fig. \ref{aug-2}, the orientation of the piece of the contour $(S^-_\infty, S^+_\infty)$
goes from right to left. A second augmentation  shown in Figure \ref{fig:new.mod.contour} allows the new contour to have the usual orientation thus allowing standard estimates of the Cauchy projectors on $\mathbb{R}$ to be used to obtain decay estimates on the potential. The Lipschitz continuity follows from the second resolvent identity.

To analyze Riemann-Hilbert problems with self-intersecting contours, we make use of certain Sobolev spaces of functions that obey continuity conditions at self-intersection points. For the reader's convenience, we briefly describe these Sobolev spaces in Appendix \ref{app:Sobolev}.
In Appendix \ref{app-cc}, we present the necessary abstract functional analysis tools used to prove uniform resolvent estimates needed for the Lipschitz continuity
of the inverse scattering map presented in Section \ref{sec:inverse}.

We end the introduction by discussing the role that  factorization of the jump 
matrix plays in our application of the  Beals-Coifman approach to inverse scattering. In Figure  
\ref{aug-2}, the oriented contour divides the complex plane into positive and 
negative regions. We factorize the jump matrix 
$$J(\lambda)=J_-(\lambda)^{-1} J_+(\lambda)$$
where 
$W_+ = J_+ -I$ and $W_- = I- J_-  $ belong to $H^1(\Gamma_\pm)$ and are continuous across the intersections between straight line contours and 
circular arcs, respecting the orientations.
This continuity means that the  matrix pair $(W_+, W_-)$ belongs to a pair of decomposing algebras $(H^1(\Gamma_+), H^1(\Gamma_-))$ in the sense of  Zhou; see \cite[Sec.\ 9]{Zhou89-1}) where a general theory of Riemann-Hilbert problems on self-intersecting contours is presented. 
As shown by Zhou, this decomposition implies that the Beals-Coifman integral operator \eqref{SIE-nu} is Fredholm. Unique solvability of \eqref{SIE-nu} then follows from the Fredholm alternative and  an appropriate vanishing lemma (the statement that the homogeneous version of \eqref{SIE-nu} has no nonzero solutions).

In our case,
we need to show that the Beals-Coifman operator
$$C_J f= C_\Gamma^+ (f W_-) +C_\Gamma^- (f W_+)$$
is Fredholm. For this purpose, following Zhou, we  approximate 
$W_\pm$ by rational functions; in this approximation, the 
operator $C^\pm_{W_\mp} \circ C^{\mp}_{W_\pm}$ is compact. We thus obtain a Fredholm regulator of the  Beals-Coifman operator (see \cite[Prop. 4.1]{Zhou89-1}).  Another way to think about the compactness is that 
continuity across intersection points prevents singularities near these points 
which might otherwise occur, spoiling the compactness. 

%
%

\section{The Direct Scattering Map}
\label{sec:direct}

\subsection{\texorpdfstring{ The scattering problem in the $\zeta$ variable}{The scattering problem in the zeta variable}}
\label{sec:scattering-prob-zeta}

The system \eqref{LS.m} can be  written  in the form of an integral equation for the $2\times 2$ matrix $m(x,\zeta)$
\begin{equation}
\label{IE-m}
m(x,\zeta)=I+\int_{\delta}^x e^{i(y-x)\zeta^2 \ad\sigma}  \left( \zeta Q(y)m(y,\zeta)+P(y)m(y,\zeta) \right)dy,
\end{equation}
where the lower limit $\delta$ can be different for  various choices of normalization. We will use several solutions of \eqref{IE-m}. The standard AKNS method starts with the following two Volterra integral equations as special cases of \eqref{IE-m} for  $\imag \zeta^2=0$:
$$
m^{\pm}(x, \zeta)=I + \int_{\pm\infty}^x e^{i(y-x)\zeta^2 \ad\sigma}  \left( \zeta Q(y)m^{\pm} (y,\zeta)+P(y)m^{\pm}(y,\zeta) \right) dy,
$$
which are expressed in componentwise form as
\begin{align}
\label{m1+}
\twovec{m_{11}^+(x,\zeta)}{m_{21}^+(x,\zeta)}
	&=	\twovec{1}{0}
			-\int_x^\infty 
				\twovec{\zeta qm _{21}^+ + p_1 m_{11}^+}
							{e^{2i\zeta^2 (x-y)}
								\left[ 
									-\zeta\overline{q}m_{11}^+ + p_2m_{21}^+ 
								\right]}				
				\, dy	
\\[10pt]
\label{m2+}
\twovec{m_{12}^+(x,\zeta)}{m_{22}^+(x,\zeta)}
	&=	\twovec{0}{1}
			-\int_x^\infty
				\twovec{e^{-2i\zeta^2(x-y)}
								\left[\zeta q m_{22}^++ p_1 m_{12}^+ \right]}
							{-\zeta \qbar m_{12}^+ + p_2 m_{22}^+}
					\, dy					
\\[10pt]
\label{m1-}
\twovec{m_{11}^-(x,\zeta)}{m_{21}^-(x,\zeta)}
	&=	\twovec{1}{0}
			+\int_{-\infty}^x
				\twovec{\zeta q m _{21}^- + p_1 m_{11}^-}
							{e^{2i\zeta^2 (x-y)}
								\left[ 
									-\zeta\overline{q}m_{11}^- + p_2m_{21}^- 
								\right]}				
				\, dy	
\\[10pt]
\twovec{m_{12}^-(x,\zeta)}{m_{22}^-(x,\zeta)}
	&=	\twovec{0}{1}
			+ \int_{-\infty}^x
				\twovec{e^{-2i\zeta^2(x-y)}
					\left[\zeta q m_{22}^-+ p_1 m_{12}^- \right]
					}
					{-\zeta\qbar m_{12}^- + p_2 m_{22}^-}
				\, dy.
\label{m2-}
\end{align}
By  uniqueness theory for ODEs and the normalizations of $m^\pm$ as $x\to\pm\infty$,  
 $m^\pm(x,\zeta)$ defined by \eqref{m1+}--\eqref{m2-}
are related by a matrix $A(\zeta)$ with $\det A(\zeta)=1$  in the form
$$
m^{+}(x,\zeta)=m^{-}(x,\zeta) e^{-ix\zeta^2\ad\sigma}A(\zeta), \qquad A(\zeta)=\twomat{a}{\bb}{b}{\ba}.
$$
The matrix-valued function $A(\zeta)$  is expressed in terms of  $m^{(\pm)}$ as
\begin{align}
\label{a.int}
a(\zeta)	
	&=	1-\int_{-\infty}^\infty \left(\zeta q m_{21}^+ +  p_1 m_{11}^+\right)\, dy=1+\int_{-\infty}^\infty \left(-\zeta \qbar m_{12}^- + p_2 m_{22}^-\right) \, dy,\\
\label{ba.int}
\ba(\zeta)
	&=	1-\int_{-\infty}^\infty \left(-\zeta \qbar m_{12}^+ + p_2 m_{22}^+ \right) \, dy=1+\int_{-\infty}^\infty \left(\zeta q m_{21}^- + p_1 m_{11}^-\right) \, dy,
\end{align}
and
\begin{align*}
b(\zeta)	
	&=	\int_{-\infty}^\infty  e^{-2i\zeta^2 y}\left( \zeta\qbar m_{11}^+ -  p_2 m_{21}^+\right)\, dy 
	=	\int_{-\infty}^\infty e^{-2i\zeta^2 y}\left( \zeta\qbar m_{11}^- -  p_2 m_{21}^-\right)\, dy,\\
\bb(\zeta)
	&=	-\int_{-\infty}^\infty e^{2i\zeta^2 y} \left(\zeta q m_{22}^+ + p_1 m_{12}^+ \right) \, dy 
	=-\int_{-\infty}^\infty  e^{2i\zeta^2 y}  \left(\zeta q m_{22}^- + p_1 m_{12}^-\right) \, dy.
\end{align*}
We  now construct the Beals-Coifman solutions needed  for the RHP in the form of piecewise analytic matrix functions. 
The left and right Beals-Coifman solutions are constructed
from the normalized Jost solutions as follows:
\begin{align}
\label{M-right}
M_R(x,z)	&=	
	\begin{cases}
	\left[ 	\dfrac{m_1^{-}(x,z) }{\ba(z)}, \, m_2^{+ }(x,z) \right],	
			&	\imag  z^2 > 0	\\
			\\
	\left[	m_1^{+}(x,z), \dfrac{m_2^{ - }(x,z)}{a(z)} \right],
			&	\imag   z^2 < 0
	\end{cases}	\\[5pt]
	\label{M-left}
M_L(x,z)	&=
	\begin{cases}
	\left[	m_1^{ -}(x,z), \dfrac{m_2^{+}(x,z)}{ \ba(z)} \right],
			&	\imag   z^2 > 0	\\
			\\
	\left[	\dfrac{m_1^{ +}(x,z)}{a(z)}, m_2^{ -}(x,z) \right],
			&	\imag   z^2 < 0.
	\end{cases}
\end{align}
The Beals-Coifman solutions are piecewise meromorphic with continuous boundary values denoted $M_{L,\pm}$ and $M_{R,\pm}$ as $\pm \imag z^2 \darr 0$, in the absence of spectral singularities.
The Beals-Coifman solutions corresponding to the potential $q_{x_0}$ are constructed similarly. 

From here onward, we will analyze the right Beals-Coifman solution \eqref{M-right}  and drop the subscripts $R$ and $L$.  The left RHP is connected to the right RHP through multiplication by an auxiliary scattering matrix which is constructed in Section \ref{aux-scatt}.

\subsection{\texorpdfstring{Construction of the scattering data in the $\zeta$ variable}{Construction of the scattering data in the zeta variables}}
\label{sec:scattering-zeta}

In this subsection, we construct  
 the piecewise analytic function $M^{(2)}(x,z)$ introduced in    \eqref{m-aug} and  defined inside the circle $\Sigma_\infty$ from which one extracts scattering data in the form of jump matrices along the contour $\Sigma_\infty$. In this subsection, $M$ denotes the right-normalized Beals-Coifman solution $M_R$.

Combining  \eqref{m1-} and  \eqref{m2+}, we obtain 
\begin{equation}
\label{m-a}
\left[ m_1^{-} , m_2^{+} \right]= I+ \int_{\delta}^{x} e^{i(y-x)\zeta^2 \ad\sigma}  \left( \left(   \zeta Q(y)+P(y)  \right) \left[ m_1^{-}(y) , m_2^{+}(y) \right]   \right) dy
\end{equation}
where  $\delta$ is chosen differently for the different entries of the matrix, namely $\delta=-\infty$ for the (1-1) and (2-1) entries and $\delta=+\infty$ for the (1-2) and (2-2) entries. Using \eqref{ba.int}, we  rewrite \eqref{m-a} as 
\begin{multline*}
\left[ m_1^{-} , m_2^{+} \right]= \\
 \twomat{\ba}{0}{0}{1} + \int_{\delta}^{x} e^{i(y-x)\zeta^2 \ad\sigma}  \left( \left(   \zeta Q(y)+P(y)  \right) \left[ m_1^{-}(y) , m_2^{+}(y) \right]   \right) dy
\end{multline*}
where $\delta=-\infty$ for the (2-1) entry and $\delta=+\infty$ for the (1-1), (1-2) and (2-2) entries. If the inverse of $ \twomat{\ba}{0}{0}{1}$ exists, 
we obtain a Fredholm equation for $M$ defined in  \eqref{M-right}:
\begin{equation}
\label{Fredholm}
\left[ 	\dfrac{m_1^{-} }{\ba}, \, m_2^{+ } \right]=I+  \int_{\delta}^{x} e^{i(y-x)\zeta^2 \ad\sigma}  \left( \left(   \zeta Q(y)+P(y)  \right) \left[ 	\dfrac{m_1^{-} }{\ba}, \, m_2^{+ } \right]  \right) dy
\end{equation}
and $\left[ {m_1^{-} }/{\ba}, \, m_2^{+} \right]$ solves  \eqref{Fredholm} iff $\ba(\zeta)\neq 0$.


The right-normalized Beals-Coifman solution $M$ is analytic in the intersection of $\pm \imag z^2 >0$ and $|z| > R$, where $R$ is chosen so large that any zeros of $a$ and $\ba$ are contained inside the disc $B(0,R) = \{z: |z| \leq R\}$. We now show how to construct solutions $M^{(2)}(x,\zeta)$, analytic inside this disc, and modify the Riemann-Hilbert problem accordingly. 

Recall from \eqref{cutoff-q}, for $x_0 \gg 1$,  denote by $q_{x_0}$ 
$$ q_{x_0} (x) = 
	\begin{cases}
		0,		&	x 	\le 	x_0	\\
		q(x)	&	x	> 		x_0
	\end{cases}
$$
and denote by $Q_{x_0}$ and $P_{x_0}$ the matrices \eqref{sigma-Q-P} with $q$ replaced by $q_{x_0}$. We choose $x_0$ so that
\begin{equation}
\label{x0-condition}
\sup_{\zeta \in B(0,R)}  \norm[L^1]{\zeta Q_{x_0} + P_{x_0}} < 1/2.
\end{equation}
This condition 
guarantees that there is 
a bounded Beals-Coifman solution $M^{(0)}$ normalized as $x\to \infty$ associated to the  potential $q_{x_0}$ in the form of \eqref{M-right}.
To see this, first note that the equation
\begin{equation}
\label{IE-m-x0}
m(x,\zeta)= I +\int_{\delta}^x e^{i(y-x)\zeta^2 \ad\sigma}  \left( \zeta Q_{x_0}(y)m(y,\zeta)+P_{x_0}\bluetext{ (y)}m(y,\zeta) \right)dy,
\end{equation}
(where $\delta=-\infty$ for the (2-1) entry and $\delta=+\infty$ for the (1-1), (1-2) and (2-2) entry) is uniquely solvable for $\zeta \in B(0,R)$ owing to the smallness condition \eqref{x0-condition}. 

Next, we claim that 
that  $\ba_0(z)$ associated to $q_{x_0}$ is nonzero for $z \in \Omega^+ \cap B(0,R)$; 
we prove this statement  by contradiction. Suppose that there exists $z_0$ such that \eqref{x0-condition} holds and $m(x,z_0)$ solves \eqref{IE-m-x0}, but $\ba(z_0)=0$. By uniqueness, \bluetext{ there exists a non-singular matrix $B(z_0)$ such that}
$$m(x, z_0)=\left[ m_1^{-} , m_2^{+} \right] e^{-i x z_0^2 \ad\sigma}B(z_0).$$
Here, $m_1^{-} , m_2^{+} $ are the Jost solutions \eqref{m2+} and \eqref{m1-} associated to the potential $q_{x_0}$.
Letting $x\to +\infty$ and using \eqref{ba.int}, we obtain
$$\twomat{1}{0}{*}{1}=\twomat{0}{0}{*}{1}e^{-i x z_0^2 \ad\sigma}B(z_0),$$
which leads to a contradiction.
Thus the cutoff potential $q_{x_0}$ supports neither eigenvalues nor spectral singularities in $B(0,R)$, so that we can construct a bounded Beals-Coifman solution of the form \eqref{M-right} associated to the potential $q_{x_0}$ and normalized as $x \to \infty$. We denote by $M^{(0)}$ this unique bounded solution.

Using the solutions $M(z)$ and $M^{(0)}$ corresponding to to the initial data potential $q$ and the related potential $q_{x_0}$, respectively, one defines a new functon $\bfM$ using Zhou's constructions as described in \eqref{M1.def}-\eqref{m-aug} above. The matrix $\bfM$ is analytic in $\C \setminus \Sigma$, and we can compute 
the jump matrix
$$v(\zeta)=e^{ix \zeta^2 \ad\sigma}\bfM_-(x,\zeta)^{-1}\bfM_+(x,\zeta)$$
explicitly across the various parts of the augmented contour $\Sigma$.
%
Along the contour $\bbR\cup i\bbR$, outside of the circle,  
\begin{equation}
\label{v}
v(\zeta)=\Twomat{1-r(\zeta)\br(\zeta)}{r(\zeta)}{-\br(\zeta)}{1} ~.
\end{equation}
Along the contour $\bbR\cup i\bbR$ inside of the circle, 
$$
v(\zeta)=\Twomat{1}{-r_0(\zeta)}{\br_0(\zeta)}{1-r_0(\zeta)\br_0(\zeta)} ~.
$$
Here, the subscript ``0'' denotes the scattering data generated by  $q_{x_0}$.

Since both $M$ and $M^{(2)}$ are solutions of \eqref{LS.m} with non-vanishing determinant,
we have
\begin{equation}
\label{v0}v(\zeta)=e^{ix\zeta^2\ad\sigma} \left( M^{(2)}(x,\zeta)^{-1}M(x,\zeta) \right)
\end{equation}
along the circle $\Sigma_\infty$.
In particular, setting $x=x_0$, we obtain $v(\zeta)$  in terms of Jost functions. 
Across the arc in the first and third quadrant, we have:
\begin{align}
\label{v-circle-+}
e^{-ix_0\zeta^2\ad\sigma} v(\zeta) &= M^{(2)}(x_0,\zeta)^{-1}M(x_0,\zeta) =\Twomat{1}{0}{\dfrac{m^{-}_{21} (x_0, \zeta) }{\ba(\zeta) \ba_0(\zeta)  } }{1}.
\end{align}
Across the arc in the second and fourth quadrant, we have:
\begin{align}
\label{v-circle--}
e^{-ix_0\zeta^2\ad\sigma} v(\zeta) 
				&= M^{(2)}(x_0,\zeta)^{-1}M(x_0,\zeta) 
				&=\Twomat{1}{-\dfrac{m^{-}_{12} (x_0, \zeta) }{a(\zeta) a_0(\zeta)  }}{0}{1}.
\end{align}
Denote by $A^\dagger$ the hermitian conjugate of the matrix $A$. The following property of $v$ will be used later to prove the unique solvability of the RHP (Proposition \ref{vanishing-zeta}).
 \begin{proposition}
 \label{schwarz invariance}
The jump matrix $v$  \bluetext{along the contour}  $\Sigma$, defined in \eqref{v}-\eqref{v-circle--}, satisfies:
\begin{enumerate}
\item[(i)] $v(\zeta)+v(\zeta)^\dagger$ is positive definite for $\zeta\in\bbR$.
\smallskip
\item[(ii)] $v(\overline{\zeta})=v(\zeta)^\dagger$ for $\zeta\in \Sigma \setminus\bbR$.
\end{enumerate}
\end{proposition}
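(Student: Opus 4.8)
The plan is to verify each of the two properties directly on every piece of the augmented contour $\Sigma = \R \cup i\R \cup \Sigma_\infty$, using the explicit formulas \eqref{v}, the corresponding formula inside the circle, and \eqref{v-circle-+}--\eqref{v-circle--}, together with the symmetry relations \eqref{a.sym}, \eqref{b.sym} for the scattering data and the analogous symmetries for the Jost solutions $m^\pm$.

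For property (i), I would restrict attention to $\zeta \in \R$, where the jump matrix is given by \eqref{v}, i.e.\ $v(\zeta) = \begin{pmatrix} 1 - r\bar r & r \\ -\bar r & 1 \end{pmatrix}$ for $|\zeta|>R$ and $v(\zeta) = \begin{pmatrix} 1 & -r_0 \\ \bar r_0 & 1 - r_0\bar r_0 \end{pmatrix}$ for $|\zeta|<R$. In both cases one computes $v + v^\dagger$ and checks positive-definiteness by examining its trace and determinant. For $|\zeta|>R$: $v+v^\dagger = \begin{pmatrix} 2(1-r\bar r) & r - \bar r \\ \bar r - r & 2 \end{pmatrix}$ (using that $\bar r(\zeta) = \overline{r(\zeta)}$ on $\R$, which follows from \eqref{a.sym}, \eqref{b.sym} restricted to the real axis — here one should double-check the precise complex-conjugation relation between $r$ and $\bar r$ on $\R$). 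Its trace is $4 - 2|r|^2 > 0$ and its determinant is $4(1-|r|^2) - |r-\bar r|^2$; one must show this is strictly positive, which should reduce to the bound $|r(\zeta)| < 1$ for $\zeta \in \R$ outside the support of spectral singularities — a consequence of the determinant relation $a\bar a - b\bar b = 1$ giving $|a|^2 \geq 1 + |b|^2$ hence $|r| = |\bar b/a| < 1$. The inner piece is handled symmetrically using the corresponding bound $|r_0| < 1$, which holds because $q_{x_0}$ has zero-free scattering data by construction. I should also note that $\R$ carries the portions inside and outside $\Sigma_\infty$ but no points on $\Sigma_\infty$ itself contribute to (i).

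For property (ii), I would go contour-piece by contour-piece through $\Sigma \setminus \R$, namely the imaginary axis $i\R$ and the circle $\Sigma_\infty$. On $i\R$, $\overline\zeta = -\zeta$, so I must show $v(-\zeta) = v(\zeta)^\dagger$; this follows from the jump formula on $i\R$ (which is obtained the same way as \eqref{v} but with the roles of the columns exchanged, cf.\ the ``inside/outside'' formulas) together with the parity symmetries $a(-\zeta)=a(\zeta)$, $b(-\zeta) = -b(\zeta)$ and the conjugation relation $\bar a(\zeta) = \overline{a(\bar\zeta)}$, which on $i\R$ reads $\bar a(\zeta) = \overline{a(-\zeta)} = \overline{a(\zeta)}$. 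On the circle $\Sigma_\infty$, the map $\zeta \mapsto \overline\zeta$ carries the arc in the first/third quadrant to the arc in the second/fourth quadrant, so I must match \eqref{v-circle-+} evaluated at $\overline\zeta$ against the Hermitian conjugate of \eqref{v-circle--} evaluated at $\zeta$ (and vice versa), accounting for the conjugation factor $e^{-ix_0\zeta^2\ad\sigma}$, which behaves correctly because $(\ad\sigma)$ is real and the conjugate of the phase is handled by $\overline{\zeta}^2 = \overline{\zeta^2}$. The key identities needed are $\overline{m^-_{21}(x_0, \overline\zeta)} = \pm\, m^-_{12}(x_0, \zeta)$ and $\overline{\ba(\overline\zeta)\,\ba_0(\overline\zeta)} = a(\zeta)\, a_0(\zeta)$, both of which are instances of the standard symmetry of the Jost solutions of the AKNS-type system \eqref{LS.m} with $Q$ as in \eqref{sigma-Q-P} (the reality structure $\overline{Q(x)^\dagger} = -Q(x)$ up to conjugation by $\sigma$), applied to both $q$ and $q_{x_0}$.

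The main obstacle I anticipate is bookkeeping the precise complex-conjugation and parity relations for the Jost solutions $m^\pm$ and the functions $a, \ba, b, \bb, r, \br$ on the various rays and arcs — getting every sign and every conjugate exactly right, especially for the off-diagonal circle entries \eqref{v-circle-+}--\eqref{v-circle--} where the $x_0$-conjugation factor and the quotients by $\ba\ba_0$ vs.\ $a a_0$ interact. Once those symmetries are recorded (they follow from the integral equations \eqref{m1+}--\eqref{m2-} and the symmetries of $Q$, $P$), the verification of (i) and (ii) is a finite and essentially mechanical case check; the positive-definiteness in (i) additionally uses the strict inequalities $|r|<1$, $|r_0|<1$ on $\R$, which is where the absence-of-singularities input genuinely enters.
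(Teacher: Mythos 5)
Your overall strategy (direct verification on each piece of $\Sigma$ using \eqref{rba} and the symmetries \eqref{a.sym}--\eqref{b.sym}) is exactly what the paper does — its proof is the one-line remark that the claim is immediate from those formulas. However, your execution of part (i) contains a genuine error at precisely the point where the content of the proposition lives. On $\R$ the symmetries give $\ba(\zeta)=\overline{a(\zeta)}$ and $\bb(\zeta)=-\overline{b(\zeta)}$, hence $\br(\zeta)=-\overline{r(\zeta)}$, not $\br=\overline{r}$ as you assumed. With the correct sign, $1-r\br=1+|r|^2$ and $-\br=\overline{r}$, so $v(\zeta)$ is already Hermitian with $\det v=1$ and positive trace; thus $v+v^\dagger=2v$ is positive definite with determinant $4$, \emph{for every value of} $|r|$. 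Your version instead reduces (i) to the bound $|r|<1$, and your justification of that bound is wrong: the determinant condition $a\ba-b\bb=1$ on the \emph{real} axis reads $|a|^2+|b|^2=1$ (the inequality $|a|^2\geq 1+|b|^2$ you invoke is the one valid on $i\R$). Consequently $|r|=|b|/|a|$ can be arbitrarily large on $\R$ — this is exactly the spectral-singularity phenomenon the paper is built to handle — so the step "positive-definiteness reduces to $|r|<1$" would fail even in situations where the proposition is true. The same remark applies to your appeal to $|r_0|<1$: zero-free scattering data for $q_{x_0}$ means $a_0\neq0$, not $|a_0|^2>1/2$. The fix is simply to use the correct conjugation relation, after which no bound on $|r|$ or $|r_0|$ is needed.

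Part (ii) of your plan is sound: on $i\R$ one combines $\overline{\zeta}=-\zeta$ with the parity relations $a(-\zeta)=a(\zeta)$, $b(-\zeta)=-b(\zeta)$ and with $\ba=\overline{a}$, $\br=\overline{r}$ (valid on $i\R$), and on $\Sigma_\infty$ one matches \eqref{v-circle-+} at $\zetabar$ against the adjoint of \eqref{v-circle--} at $\zeta$ using $\overline{\ba(\zeta)}=a(\zetabar)$ and the Jost symmetry $m^-_{12}(x_0,\zetabar)=-\overline{m^-_{21}(x_0,\zeta)}$, which follows from the structure of $Q$ and $P$ in \eqref{sigma-Q-P}. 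Pinning down that minus sign (you left it as $\pm$) is what makes the off-diagonal entries match.
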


\begin{proof}
\bluetext{ This is an  immediate consequence of the definitions  \eqref{v}--\eqref{v-circle--} and  \eqref{rba} as well as the symmetries \eqref{a.sym}--\eqref{b.sym}.   }
\end{proof}

\subsection{\texorpdfstring{Construction of the scattering data in the $\lambda$ variable}{Construction of the scattering data in the lambda variable}}
\label{sec:scattering-lambda}

In  the absence of eigenvalues and spectral singularities, we reduced the scattering problem \eqref{LS.m} of $\zeta \in \R \cup i\R$ to scattering problem for $\lambda = \zeta^2 \in \bbR$, and identified a single scattering datum $\rho(\lam)$ defining the direct scattering map \cite{LPS15}; we can carry out a similar reduction here.  
Let $m(x,\zeta)$ be a solution to \eqref{LS.m}.  We   set
$$ m^\sharp(x,\zeta) = \Twomat{m_{11}(x,\zeta)}{ \zeta^{-1} m_{12}(x,\zeta)}{\zeta m_{21}(x,\zeta) }{m_{22}(x,\zeta)}.$$
 $m^\sharp$ is an even function of $\zeta$.  Defining
$ \lambda = \zeta^2$, 
$n(x,\lambda) = m^\sharp(x,\zeta)$, the map
$$ \twomat{a}{b}{c}{d} \mapsto \twomat{a}{\zeta^{-1}b }{\zeta c }{d} $$
is an automorphism of $2 \times 2$ matrices and commutes with differentiation in $x$. 
It follows that the functions $n^\pm$ obtained from $m^\pm$ by this map obey
\begin{subequations}
\label{n}
\begin{align}
\label{n.de}
\frac{dn^\pm}{dx}	&=	-i\lambda \ad \sigma  (n^\pm) +\offmat{q}{-\lam \qbar} n^\pm + P n^\pm\\
\label{n.ac}
\lim_{x \rarr \pm \infty} n^\pm(x,\lam)	&=	I
\end{align}
\end{subequations}
and satisfy
\begin{align}
\label{n.T}
n^+(x,\lam)& =	n^-(x,\lam)
	e^{-i\lambda x \ad \sigma } 
	\Twomat{\alpha(\lam)}
				{\beta(\lam)}
				{\lam {\bbeta(\lam)}}
				{{\balpha(\lam)}}\\[5pt]
\nonumber
           & =  n^-(x,\lam)
	e^{-i\lambda x \ad \sigma } 
	\Twomat{\alpha(\lam)}
				{\beta(\lam)}
				{-\lam \overline{\beta(\lam)}}
				{\overline{\alpha(\lam)}}
\end{align}
where
$ \alpha(\lam) = a(\zeta), \quad \beta(\lam) = \zeta^{-1} \bb(\zeta) $
and the relation 
$  |\alpha(\lam)|^2 +\lam |\beta(\lam)|^2 = 1 $ holds. 

In the presence of  spectral singularities, we perform the change of variable $\zeta\to \lambda$ in the same way as in \cite{LPS15} and obtain the corresponding row vector-valued Beals-Coifman solutions $N^{(0)}$, $N^{(2)}$ and $N$ :
\begin{align*}
N^{(0)} &= \textrm{first row of } \twomat{\zeta^{-1/2}}{0}{0}{\zeta^{1/2}}M^{(0)}  \twomat{\zeta^{1/2}}{0}{0}{\zeta^{-1/2}},\\
N^{(2)} &= \textrm{first row of } \twomat{\zeta^{-1/2}}{0}{0}{\zeta^{1/2}}M^{(2)}  \twomat{\zeta^{1/2}}{0}{0}{\zeta^{-1/2}},\\
N &= \textrm{first row of } \twomat{\zeta^{-1/2}}{0}{0}{\zeta^{1/2}}M \twomat{\zeta^{1/2}}{0}{0}{\zeta^{-1/2}}.
\end{align*}
The contour $\Gamma$ for the new RHP, defined by \eqref{gamma.def} is the image in Figure~\ref{aug-2} of the contour $\Sigma$ in Figure \ref{aug-1} under the change of variable $\lam = \zeta^2$.

 Notice that the direction of  the contour that consists of the part of the real axis inside the circle is from right to left.
Define  the piecewise analytic function $\bfN$ as
\begin{equation}
\label{bfN}
\bfN(x,z)	=
	\begin{cases}
		N(x,z),		& z \in \Omega_1\cup \Omega_2,\\
		N^{(2)}(x,z), 	& z \in \Omega_3\cup \Omega_4.
	\end{cases}
\end{equation}
By setting
\begin{align*}
\alpha(\lambda) &=a(\zeta), \quad \balpha(\lambda) =\ba(\zeta),\\
\rho(\lambda)&= \zeta^{-1}r(\zeta), \quad \rho_0(\lambda)= \zeta^{-1}r_0(\zeta),\\
n_{21}^-(x,\lambda)&=\zeta  m_{21}^-(x,\zeta),\quad n_{12}^-(x,\lambda)=\zeta^{-1}  m_{12}^-(x,\zeta),
\end{align*}
we obtain from \eqref{v} -- \eqref{v-circle--} the  jump matrices $J(\lam)$ for the piecewise  row vector $\bfN$.
  
 \begin{proposition}\label{matrices-entry}
The jump matrices for $\bfN$ along the various parts of the contour ${\Gamma}$ are given as follows:
 $$\bfN_+(x,\lambda)=\bfN_-(x,\lambda)e^{-i\lambda \ad\sigma}J(\lambda)$$
 
\begin{enumerate}
\item[(i)] on $\R_\infty$ the part of the real line outside the circle:
\begin{equation}
\label{J}
J(\lambda)=\ttwomat{ 1+\lambda|\rho(\lambda)|^2}{\rho(\lambda)}{\lambda\overline{\rho(\lambda)}}{1}
\end{equation}
\item[(ii)] on $(-S_\infty, S_\infty)$ the part of the real line inside the circle:
\begin{equation}
\label{J-0}
J(\lambda)=\ttwomat{1} {-\rho_0(\lambda)}{-\lambda\overline{\rho_0(\lambda)}}  { 1+\lambda|\rho_0(\lambda)|^2}
\end{equation}
\item[(iii)] on the semicircular arc $\Gamma_\infty^+$ in $\bbC^+$:
\begin{equation}
\label{J+}
J(\lambda)=\ttwomat{1}{0}{e^{-2ix_0\lambda}  \dfrac{n^{-}_{21} (x_0, \lambda) }{\balpha(\lambda) \balpha_0(\lambda)  }}{1}
\end{equation}
\item[(iv)] on the semicircular arc $\Gamma_\infty^-$ in $\bbC^-$:
\begin{equation}
\label{J-}
J(\lambda)=\ttwomat{1}{-e^{2ix_0\lambda}\dfrac{n^{-}_{12} (x_0, \lambda) }{\alpha(\lambda) \alpha_0(\lambda)  }}{0}{1} ~.
\end{equation}
\end{enumerate}
\end{proposition}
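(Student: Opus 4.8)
The plan is to obtain each of \eqref{J}--\eqref{J-} directly from the jump matrices computed in the $\zeta$-variable in Section~\ref{sec:scattering-zeta} --- namely \eqref{v} along $\R\cup i\R$ outside $\Sigma_\infty$, its counterpart along $\R\cup i\R$ inside $\Sigma_\infty$, and \eqref{v-circle-+}, \eqref{v-circle--} along the two arcs --- by transporting them through the change of variable $\lam=\zeta^2$. The key observation is that the $\sharp$-substitution $\twomat{a}{b}{c}{d}\mapsto\twomat{a}{\zeta^{-1}b}{\zeta c}{d}$ relating $M^{(0)},M^{(2)},M$ to $N^{(0)},N^{(2)},N$ is conjugation by the diagonal matrix $D(\zeta)=\diagmat{\zeta^{-1/2}}{\zeta^{1/2}}$, and that $D(\zeta)$ commutes with $e^{ix\zeta^2\ad\sigma}$. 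Since $\zeta\mapsto\zeta^2$ maps the oriented contour $\Sigma$ of Figure~\ref{aug-1} onto the oriented contour $\Gamma$ of Figure~\ref{aug-2} --- the exterior sectors $\widetilde{\Omega}_1,\dots,\widetilde{\Omega}_4$ onto $\Omega_1\cup\Omega_2$ and the interior sectors $\widetilde{\Omega}_5,\dots,\widetilde{\Omega}_8$ onto $\Omega_3\cup\Omega_4$, two-to-one but compatibly with orientation (in particular the right-to-left orientation of the part of $\Gamma$ inside $\Gamma_\infty$ is inherited from the inner pieces of $\R\cup i\R$) --- the jump relation $\bfM_+=\bfM_-\,e^{-ix\zeta^2\ad\sigma}(v)$ pushes forward to the jump relation for $\bfN$ with jump matrix
$$J(\lam)=D(\zeta)\,v(\zeta)\,D(\zeta)^{-1}=\twomat{v_{11}}{\zeta^{-1}v_{12}}{\zeta\,v_{21}}{v_{22}}.$$

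It then remains to substitute the $\lam$-variable data introduced just before the proposition --- $\rho=\zeta^{-1}r$, $\rho_0=\zeta^{-1}r_0$, $\alpha=a$, $\balpha=\ba$ and their $q_{x_0}$-analogues, together with $n^-_{21}(x_0,\lam)=\zeta\,m^-_{21}(x_0,\zeta)$ and $n^-_{12}(x_0,\lam)=\zeta^{-1}m^-_{12}(x_0,\zeta)$ --- and to note that $e^{ix_0\zeta^2\ad\sigma}$ acts as multiplication by $e^{-2ix_0\zeta^2}=e^{-2ix_0\lam}$ on the $(2,1)$ entry and by $e^{2ix_0\zeta^2}=e^{2ix_0\lam}$ on the $(1,2)$ entry (the eigenvalues of $\ad\sigma$ on the strictly lower- and upper-triangular entries being $-2$ and $+2$). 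For parts (iii) and (iv), multiplying the right-hand sides of \eqref{v-circle-+} and \eqref{v-circle--} back through $e^{ix_0\zeta^2\ad\sigma}$ and conjugating by $D$ immediately gives \eqref{J+} and \eqref{J-}. For parts (i) and (ii) one additionally uses the symmetries \eqref{a.sym}--\eqref{b.sym} (equivalently the definitions \eqref{rba}): for $\zeta\in\R$ one has $\br(\zeta)=-\overline{r(\zeta)}=-\zeta\,\overline{\rho(\lam)}$, so the $(1,1)$ entry $1-r\br$ of \eqref{v} becomes $1+\lam|\rho|^2$, the $(2,1)$ entry $v_{21}=-\br$ equals $\zeta\,\overline{\rho}$ whence $\zeta\,v_{21}=\lam\,\overline{\rho}$, and the analogous computation for the inner jump matrix produces \eqref{J-0}.

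The computation is elementary; the one point that requires care is the branch of $\zeta^{1/2}$. Although $D(\zeta)$ is double-valued, the conjugation $D(\zeta)\,\cdot\,D(\zeta)^{-1}$, and hence every expression above, involves only $\zeta^{\pm1}$, $\zeta^{\pm2}$, and quantities already even in $\zeta$; together with the evenness of $m^\sharp$ recorded in Section~\ref{sec:scattering-lambda}, this shows that each $J(\lam)$ is single-valued and is a genuine function of $\lam$. I expect this branch bookkeeping, together with checking that orientations match at the self-intersection points of $\Gamma$, to be the only mildly delicate part of the argument; no analytic input beyond the formulas established in Section~\ref{sec:scattering-zeta} is needed.
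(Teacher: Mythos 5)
Your proposal is correct and is essentially the computation the paper leaves implicit: the paper states Proposition \ref{matrices-entry} as an immediate consequence of the substitutions $\rho=\zeta^{-1}r$, $\alpha=a$, $n^-_{21}=\zeta m^-_{21}$, etc.\ applied to \eqref{v}--\eqref{v-circle--}, which is exactly your conjugation by $D(\zeta)=\diagmat{\zeta^{-1/2}}{\zeta^{1/2}}$ together with the $\ad\sigma$ eigenvalue bookkeeping. The only small point worth making explicit is that the sign relation $\br=-\overline{r}$ you use for (i)--(ii) holds on $\R$ but flips to $\br=+\overline{r}$ on $i\R$ (by \eqref{a.sym}--\eqref{b.sym} with $\zetabar=-\zeta$), where $\zeta^2=\lam<0$ compensates so that the same formulas \eqref{J}, \eqref{J-0} in $\lam$ result — consistent with your single-valuedness remark.
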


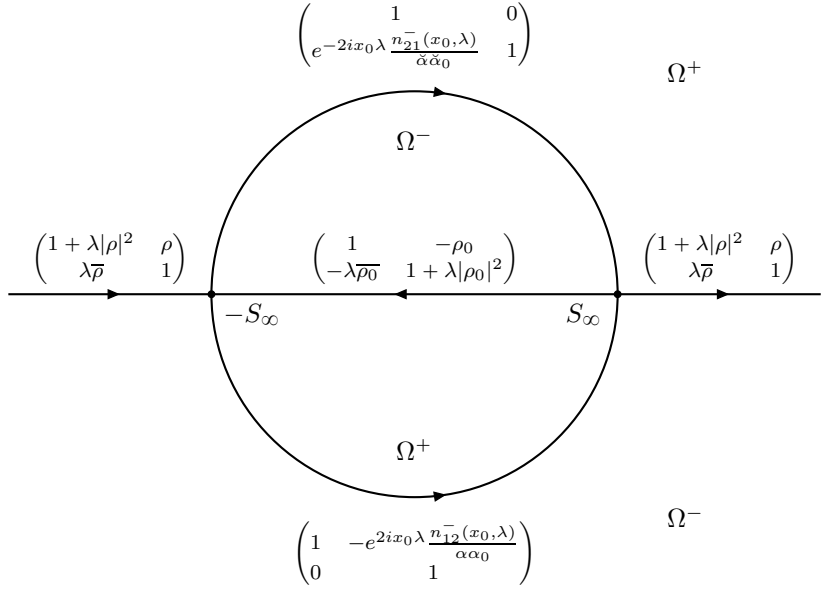
\begin{figure}[H]
\caption{Scattering data for $q$}

\bigskip

\begin{tikzpicture}[scale=0.9]
\draw[black, fill=black] 		(3,0) 		circle [radius=0.05];
\draw [black, fill=black] 		(-3,0) 	circle [radius=0.05];
\draw[thick,->-]		(-3,0) arc(180:0:3);
\draw[thick,->-]		(-3,0)	 arc(180:360:3);
\draw [thick,->-] 		(-6,0) -- (-3,0);
\draw [thick,->-]		(3,0)	--	(-3,0);
\draw [thick,->-] 		(3,0) -- 	(6,0);
\node [below] at (0,-2) 		{$\Omega^+$};
\node [above] at (0,2) 		{$\Omega^-$};
\node [below] at (4,-3) 		{$\Omega^-$};
\node [above] at (4, 3) 		{$\Omega^+$};
\node[below] at (2.5,0)		{$S_\infty$};
\node[below] at (-2.4,0)		{$-S_{\infty}$};
\node[above] at 	(-4.5,0)		{\footnotesize{$\twomat{1+\lam|\rho|^2}{\rho}{\lam \overline{\rho}}{1}$}};
\node[above] at 	(4.5,0)		{\footnotesize{$\twomat{1+\lam|\rho|^2}{\rho}{\lam \overline{\rho}}{1}$}};
\node[above]	at	(0,0)			{\footnotesize{$\twomat{1}{-\rho_0}{-\lam \overline{\rho_0}}{1+\lam|\rho_0|^2}$}};
\node[above]	at	(0,3.2)			{\footnotesize{$\twomat{1}{0}{e^{-2ix_0 \lam}\frac{n_{21}^-(x_0,\lam)}{\balpha \balpha_0}}{1}$}};
\node[below] 	at	(0,-3.2)			{\footnotesize{$\twomat{1}{-e^{2ix_0\lam}\frac{n_{12}^-(x_0,\lam)}{\alpha \alpha_0}}{0}{1}$}};
\end{tikzpicture}
\label{fig-scattering}
\end{figure}

\begin{remark}
The \textit{scattering data} associated to the potential $q$  are defined as the entries of the different  jump matrices
along ${\Gamma}$, as listed in Proposition \ref{matrices-entry} and shown in Figure \ref{fig-scattering}. We show the  factorizations of jump matrices exist and obtain estimates in appropriate Sobolev spaces. The choice of scattering data is motivated by the inverse problem. From these spectral data,   we will, in the next section,  define an inverse map and a reconstruction of the potential. Note that the scattering data depend on the choice of $x_0$ as well as the choice of the large circle $\Gamma_\infty$.  Indeed in \cite{Zhou89-2},  scattering data are seen  as an equivalence class. In the study of the inverse map, we will need the fact that the reconstruction formula does not depend on $x_0$ and $\Gamma_\infty$. This is because the reconstruction formula involves a limit as $\lambda$ tends to infinity of the entry $(1,2)$ of  the solution of a RHP, and  will not be affected by the exact position of the cut-off point or the circle $\Gamma_\infty$, although the RHP itself depends on it. For more details, we refer  to \cite[Theorem 3.3.15]{Zhou89-2}. 
 \end{remark}
 
To give a  full characterization of  the scattering data,  we use \bluetext{   the Sobolev } spaces  $H^k_z(\Gamma)$ and $H^k_\pm(\Gamma)$ 
\bluetext{ defined on self-intersecting contours}  (see Appendix \ref{app:Sobolev})
and   the notion of $k$-regularity \cite[Definition 2.54]{TO16} of a given jump matrix along an admissible contour.
\bluetext{ All contours under consideration here}  are admissible in the sense of \cite[Definition 2.40]{TO16}.

\begin{definition}
A jump matrix $J$ defined on an admissible contour $\Gamma$ is $k$-\emph{regular} if $\Gamma$ is  complete and $J$ has a factorization
$$J(s)=J_-^{-1}(s)J_+(s)$$
where $J_\pm(s)-I$ and $J_\pm^{-1}(s)-I$ $\in H^k_\pm(\Gamma)$.
\end{definition}
\begin{definition}
Assume $a\in\gamma_0$, the set of self-intersections of $\Gamma$. Let $\Gamma_1$, . . . , $\Gamma_m$ be a counter-clockwise ordering of sub-components of $\Gamma$ which contain $z = a$ as an endpoint. For $J \in H^k(\Gamma)$, we define $\widehat{J}_i$ \bluetext{ as the restriction }  $J\restriction_{\Gamma_i}$ if $\Gamma_i$ is oriented outwards and by $(J\restriction_{\Gamma_i})^{-1}$ otherwise. We say that  {\emph{$J$ satisfies the $(k -1)$th-order product condition}} if, using the $(k -1)$th-order Taylor expansion of each $J_i$, we have
\begin{equation}
\label{product}
\prod_{i=1}^m \widehat{J}_i=I+\mathcal{O}\left(| \lambda -a|^k  \right)\quad \forall a\in \gamma_0~.
\end{equation}
\end{definition}
The following theorem is due to Zhou \cite{Zhou99}; see also Trogdon-Olver  \cite[Theorem 2.56]{TO16}.

\begin{theorem}
\label{thm:k-regular}
The two following statements are equivalent:

(i) $J-I$ and $J^{-1}-I$ $\in H^k(\Gamma)$ away from points of self intersection and $J$ satisfies the $(k-1)$th-order product condition;

(ii) $J$ is $k$-regular.
\end{theorem}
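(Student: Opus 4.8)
The plan is to prove the equivalence $(i) \Leftrightarrow (ii)$ of Theorem \ref{thm:k-regular} by treating the two implications separately, with the self-intersection points as the only real difficulty. The easy direction is $(ii) \Rightarrow (i)$: if $J = J_-^{-1} J_+$ with $J_\pm - I, J_\pm^{-1} - I \in H^k_\pm(\Gamma)$, then away from $\gamma_0$ the multiplicative identities $J - I = J_-^{-1}(J_+ - J_-)$ and $J^{-1} - I = J_+^{-1}(J_- - J_+)$ together with the algebra property of $H^k$ (Sobolev embedding $H^k \hookrightarrow L^\infty \cap C^{k-1}$ on a one-dimensional contour, for $k \geq 1$) show $J - I, J^{-1} - I \in H^k(\Gamma)$. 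For the product condition, I would evaluate the factorization near a self-intersection $a$: each $\widehat{J}_i$ equals either $(J_{-}^{-1} J_+)\restriction_{\Gamma_i}$ or its inverse, and the key point is that the functions $J_\pm$ defined on $\Gamma_\pm = \partial \Omega_\pm$ are the boundary values of functions analytic in $\Omega_\pm$ near $a$; hence when one forms the alternating product $\prod_{i=1}^m \widehat{J}_i$ in counterclockwise order, adjacent analytic pieces telescope, and what survives is $I$ up to the error from having used only the $(k-1)$th-order Taylor polynomials, i.e.\ $I + \mathcal{O}(|z-a|^k)$. This is essentially the observation that a function analytic in a full punctured neighborhood of $a$ (which is what the alternating product builds) that is also continuous at $a$ must be constant to the relevant order.

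The harder direction is $(i) \Rightarrow (ii)$, which is a construction: given $J$ with $J - I, J^{-1} - I \in H^k(\Gamma)$ away from $\gamma_0$ and the product condition, produce the factorization $J = J_-^{-1} J_+$ with the stated membership. The strategy I would follow is the standard one (Zhou \cite{Zhou99}, Trogdon--Olver \cite[Theorem 2.56]{TO16}): first reduce to a neighborhood of each self-intersection point via a partition of unity, since away from $\gamma_0$ on a smooth arc one can factor $J$ trivially (take $J_+ = J$, $J_- = I$ on the relevant side, then patch). Near a self-intersection $a$ with sub-components $\Gamma_1, \dots, \Gamma_m$ emanating, I would build $J_\pm$ locally by an explicit recursion: pick an analytic (in fact polynomial in $z - a$) interpolant on each sector that matches the prescribed $\widehat{J}_i$ to order $k-1$ along the corresponding ray, and use the product condition \eqref{product} to guarantee that these local pieces are compatible, i.e.\ can be assembled into a genuine pair of functions $J_+$ analytic near $a$ in $\Omega_+$ and $J_-$ analytic near $a$ in $\Omega_-$ with $J_-^{-1} J_+ = J$ along $\Gamma$ modulo a remainder that is $\mathcal{O}(|z-a|^k)$, hence absorbable into the $H^k$ smooth part handled away from $a$. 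The product condition is exactly what makes the "going around $a$" monodromy trivial to the needed order, so the local factors close up consistently.

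The main obstacle — and the step I expect to consume most of the work — is the local construction at a self-intersection point and, in particular, verifying that the pieces glue to give factors with $H^k_\pm(\Gamma)$ regularity (not merely $C^{k-1}$). This requires care with the definition of the spaces $H^k_\pm(\Gamma)$ on self-intersecting contours from Appendix \ref{app:Sobolev}: one must check that the locally constructed $J_\pm$ and their inverses, after the partition-of-unity patching with the (trivial) factorization on the smooth part, lie in these spaces, which encode compatibility of one-sided boundary values at $\gamma_0$. The error terms generated by truncating Taylor expansions to order $k-1$ are $\mathcal{O}(|z-a|^k)$ and thus vanish to high enough order at $a$ that multiplying by a smooth cutoff keeps them in $H^k$; tracking this bookkeeping, and confirming that inverting the constructed $J_\pm$ (which are $I$ plus something small and smooth near $a$) stays in the space, is the technical heart. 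Since this is precisely Zhou's theorem as cited, I would present the argument at the level of indicating the recursion and the role of the product condition, and refer to \cite{Zhou99} and \cite[Theorem 2.56]{TO16} for the full details of the gluing estimates.
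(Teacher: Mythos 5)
The paper offers no proof of this theorem: it is stated as a quoted result of Zhou \cite{Zhou99} (see also \cite[Theorem 2.56]{TO16}), so there is no internal argument to compare against. Your sketch is a faithful outline of the proof in those references --- trivial factorization away from $\gamma_0$, local polynomial interpolants at each self-intersection point whose consistency is guaranteed by the product condition, and a partition-of-unity gluing --- and since you explicitly defer the gluing estimates to the same sources the paper cites, your proposal is at the level of detail the paper itself supplies.

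One small correction in your $(ii)\Rightarrow(i)$ direction: the telescoping of the alternating product $\prod_i \widehat{J}_i$ does not come from $J_\pm$ being boundary values of functions analytic in $\Omega_\pm$ near $a$ --- a general element of $H^k_\pm(\Gamma)$ need not extend analytically. What makes adjacent factors cancel to order $k-1$ is precisely the definition of $H^k_\pm(\Gamma)$ in Appendix \ref{app:Sobolev}: membership encodes equality of the one-sided limits of $f^{(j)}$, $0\le j\le k-1$, across the designated pairs of components meeting at $a$ (conditions \eqref{Hk+} and \eqref{Hk-}), so the Taylor polynomials of consecutive $\widehat{J}_i$ agree and the product collapses to $I+\mathcal{O}(|z-a|^k)$. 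The conclusion is unaffected, but the mechanism is the derivative-matching built into the spaces, not analyticity.
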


In the next theorem, we check that the jump matrix $J(\lambda)$
satisfies the condition (i) of Theorem \ref{thm:k-regular} and characterize the large-$\lam$ decay of scattering data in  weighted Sobolev spaces.  
Let
\begin{align*}
H^{2,2}(\dee\Omega_2)	 	&=	
		\left\{ f \in H^2(\dee \Omega_2):  
					\left. f \right|_{\R_\infty} \in H^{2,2}(\R_\infty)
		\right\},	\\
H^{1,1}(\dee \Omega_1)		&=	
		\left\{	f \in H^1(\dee \Omega_1):
					\left. f \right|_{\R_\infty} 		\in H^{1,1}(\R_\infty)
		\right\}.
\end{align*}
Theorem \ref{thm:scattering data} should be compared to (C2.28) of \cite{Zhou95}, where the scattering matrix is characterized as belonging to $H^{k}$ for any $k\geq 1$ given initial data $q_0$ is in Schwartz class. Theorem \ref{thm:scattering data} shows that the direct scattering transform maps  a potential $q$ in the weighted Sobolev space $H^{2,2}(\R)$ into scattering data in appropriate \emph{weighted} Sobolev spaces.

\begin{theorem}
\label{thm:scattering data}
The matrix $J(\lambda)$ admits a triangular factorization $$J(\lambda)=J_-^{-1}(\lambda)J_+(\lambda)$$ where: 
\begin{enumerate}
\item[(i)]
$J_{-}(\lambda)-I\in H^{2,2}(\partial \Omega_2) $, $J_{-}(\lambda)-I\in H^{2}(\partial \Omega_3) $,
$J_{+}(\lambda)-I\in H^{2}(\partial \Omega_4)$ and $J_+(\lambda)-I\in H^{1,1}(\partial\Omega_1)$\footnote{ The asymmetry of the regularity properties of  the  terms  in  $J(\lambda)$ factorization  on the various parts of the contour comes from the fact that the (1,2) and (2,1) entries in the expression for $J$ (see eq.  \eqref{J}) differ by a weight  $\lambda$.}, and
\smallskip
\item[(ii)] $J_+\restriction_{\partial\Omega_1}-I$ and $J_-\restriction_{\partial\Omega_3}-I$ are strictly lower triangular while $J_-\restriction_{\partial\Omega_2}-I$ and $J_+\restriction_{\partial\Omega_4}-I$ are strictly upper triangular.
\end{enumerate}
\begin{enumerate}
\item[(iii)]  The matrix $J(\lam)$ satisfies the first-order product condition at the 
\bluetext{ intersection} points $\pm S_\infty$ \bluetext{ of} the real $\lam$-axis.
\end{enumerate}
\end{theorem}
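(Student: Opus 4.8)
The plan is to verify the three items of Theorem \ref{thm:scattering data} by working directly from the explicit jump matrices listed in Proposition \ref{matrices-entry} and tracing the Sobolev regularity of the scattering data back to the regularity of $q \in H^{2,2}(\R)$. For item (ii), the structure is immediate: along $\partial\Omega_1$ (the arc in $\C^+$) and along the real line outside the circle with the appropriate orientation, the jump matrix in \eqref{J+} is already lower triangular with $I$ on the diagonal; by the orientation conventions displayed in Figure \ref{aug-2} (the $+$/$-$ decorations and the fact that $\Gamma_+ = \partial\Omega_1 \cup \partial\Omega_4$, $\Gamma_- = \partial\Omega_2\cup\partial\Omega_3$), the factor $J_+$ on $\partial\Omega_1$ is exactly $J$ there, hence strictly lower triangular minus $I$; similarly $J_-$ on $\partial\Omega_3$ comes from \eqref{J-0} read with reversed orientation, which is lower triangular, and $J_-$ on $\partial\Omega_2$, $J_+$ on $\partial\Omega_4$ come from \eqref{J} and \eqref{J-} and are upper triangular. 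So item (ii) amounts to bookkeeping with the orientation of $\Gamma$.

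For item (i) — the Sobolev membership — I would split into the contributions on $R_\infty$ (the real axis outside $\Gamma_\infty$) and on the arcs. On $R_\infty$ the jump matrix is governed by $\rho(\lambda) = \zeta^{-1} r(\zeta)$ with $\lambda = \zeta^2$, and the required statement $\rho \in H^{1,1}(R_\infty)$ (with the weight supplying the $\lambda$-decay and two derivatives being tracked where the contour is $\partial\Omega_2$, i.e. $H^{2,2}$) is essentially the classical direct-scattering estimate: this is where one invokes the mapping properties of $q \mapsto r$ established in \cite{LPS15} (and used in \cite{JLPS17a}), translating the $H^{2,2}(\R)$ regularity of $q$ into $H^{2,2}$/$H^{1,1}$ bounds on $\rho$ via the integral representations \eqref{a.int}–\eqref{ba.int} for $a, b$ and the Fredholm equation \eqref{Fredholm} for $M$; the factor $\zeta^{-1}$ is harmless because $a$ (hence $\alpha$) is bounded away from zero for $|\lambda|$ large. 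On the arcs of $\Gamma_\infty$ the jump data are $e^{\mp 2ix_0\lambda} n^-_{12,21}(x_0,\lambda)/(\alpha\alpha_0)$ and $\rho_0$; here $\lambda$ ranges over a compact arc, so no weights are needed and one only needs $H^2$ smoothness. Since $q_{x_0}$ has zero-free scattering data (by the choice \eqref{x0-condition} and the analytic-Fredholm argument already given in the excerpt), $\alpha_0$, $a_0$ are smooth and nonvanishing on the arc, and $n^-_{12}(x_0,\cdot)$, $n^-_{21}(x_0,\cdot)$, $\rho_0$ inherit $H^2$-regularity (indeed more) from the smooth, compactly-supported-in-$\zeta$-disc dependence of the Jost solutions of \eqref{IE-m} on $\lambda$; analyticity of $n^-$ in a neighborhood of the arc makes the $H^2$ bound on a compact set routine. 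The subtlety to watch is the matching of the $R_\infty$ piece and the arc pieces at $\pm S_\infty$: one must check that the scattering data are continuous (with matching derivatives) across those junctions, but this is precisely what item (iii) asserts and is a local statement.

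For item (iii) — the first-order product condition at $\pm S_\infty$ — the idea is to compute, at the self-intersection point $S_\infty$, the ordered product $\prod_i \widehat J_i$ over the (four) sub-arcs of $\Gamma$ meeting there (the two halves of $R$ inside/outside and the two arcs of $\Gamma_\infty$) with the orientation-dependent inversion of the definition preceding \eqref{product}, and to show it equals $I + \mathcal{O}(|\lambda - S_\infty|^2)$. The key point is that this product is not an accident of the formulas but is forced by the construction: at $\zeta_0 = \sqrt{S_\infty} \in \Sigma_\infty$, the function $\bfN$ (equivalently $\bfM$) is obtained by gluing $N$ and $N^{(2)}$, and the four jump matrices around $S_\infty$ are the pairwise transition matrices among $N_\pm$ and $N^{(2)}_\pm$ evaluated along the four incident rays; their cyclic product is $N_+^{-1} N_+ = I$ on the nose at $\lambda = S_\infty$ (the "zero-th order" product condition), and the first-order condition follows by differentiating, using that $N$ and $N^{(2)}$ both solve the same ODE \eqref{n.de} so their ratio is locally analytic at $\zeta_0$ away from $\R$, hence its Taylor expansion matches to the required order. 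Concretely one can quote Zhou's framework \cite{Zhou98} (and Trogdon–Olver \cite{TO16}, Def. 2.54 and Thm. 2.56 = Theorem \ref{thm:k-regular} above): the augmented contour was built precisely so that the matching conditions at contour intersections hold, so verifying the product condition reduces to checking that the off-diagonal entries produced in \eqref{J}–\eqref{J-} agree to first order in the Taylor expansion at $\pm S_\infty$ — i.e. that the limiting value of $\rho$ from $R_\infty$ and the limiting values of $\rho_0$ and of $e^{\mp 2ix_0\lambda} n^-_{12,21}(x_0,\lambda)/(\alpha\alpha_0)$ from the arcs assemble, via the triangular-matrix product, to the identity through order one. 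This computation, together with the elementary observation that a product of two strictly-lower and two strictly-upper unipotent triangular $2\times2$ matrices equals $I$ iff the off-diagonal entries cancel in the right order, is the heart of item (iii).

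The main obstacle I anticipate is not item (iii) (which is structural) but the weighted-Sobolev estimate on $R_\infty$ in item (i): one must carry two derivatives and two powers of $\lambda$ through the integral equations for the Jost solutions and the scattering coefficients while keeping the division by $\alpha$ (i.e. by $a(\zeta)$) under control at large $|\lambda|$ and while correctly accounting for the $\zeta^{-1}$ in $\rho = \zeta^{-1} r$ near $\lambda = 0$ — though the latter only matters off $R_\infty$, so on $R_\infty$ the estimate is genuinely the $H^{2,2}\to$ scattering-data estimate of \cite{LPS15,JLPS17a}, adapted to the cutoff geometry. I would organize the proof to dispatch (ii) first (pure orientation bookkeeping), then (iii) (local matching at $\pm S_\infty$, quoting \cite{Zhou98,TO16}), then (i) by citing and localizing the established direct-scattering bounds on $R_\infty$ and handling the arcs by smoothness-on-a-compact-set for the zero-free cutoff data.
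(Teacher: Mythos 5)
Your proposal is correct and follows essentially the same route as the paper: item (ii) by orientation bookkeeping from the explicit factorizations \eqref{factor-J1}--\eqref{factor-J3}, item (i) by quoting the $H^{2,2}$ direct-scattering estimates of \cite{LPS15} on $\R_\infty$ and using the algebra property of $H^2$ together with nonvanishing of $\alpha,\alpha_0,\balpha,\balpha_0$ on the compact arcs, and item (iii) by reducing the product condition at $\pm S_\infty$ to a matching of the off-diagonal entries via the relation \eqref{n.T}. The only cosmetic difference is that you frame the zeroth-order product condition as an automatic telescoping of transition matrices, whereas the paper verifies it (and the first-order condition) by explicitly rewriting $J_2(S_\infty)$ and $J_4(S_\infty)$ in terms of $\rho$ and $\rho_0$ and differentiating; these amount to the same computation.
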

\begin{proof}
 Let $J_i$  be the restriction of $J$ to  $\Gamma_i$, $1 \leq i \leq 5$, 
where the contours $\Gamma_i$ are shown in the figure below
\begin{center}
\begin{figure}[H]
\caption{Zero-Sum Conditions}
\bigskip
\begin{subfigure}{0.4\textwidth}
\caption{$-S_\infty$}
\bigskip

\begin{tikzpicture}[scale=0.6]
\draw[thick,->-,>=stealth]		(-6,0) -- (-3,0);
\draw[thick,->-,>=stealth]		(0,0)	--	(-3,0);
\draw[thick,->-,>=stealth]		(-3,0)		arc(180:90:3);
\draw[thick,->-,>=stealth]		(-3,0)		arc(-180:-90:3);
\draw[black,fill=black]			(-3,0)		circle(0.10cm);
\node[below]	at	(-3.75,-0.15){$-S_\infty$};
\node[above]	at	(-5,0)			{$\Gamma_1$};
\node[above]	at	(135:3.5)	{ $\Gamma_2$};
\node[below]	at	(225:3.5)	{ \bluetext{  $\Gamma_4$}};
\node[above]	at	(-1.5,0)		{\bluetext{ $\Gamma_3$}};
\end{tikzpicture}
\end{subfigure}
\qquad
\begin{subfigure}{0.4\textwidth}
\caption{$S_\infty$}
\bigskip

\begin{tikzpicture}[scale=0.6]
\draw[thick,->-,>=stealth]		(3,0)	--	(0,0);
\draw[thick,->-,>=stealth]		(3,0)	--	(6,0);
\draw[thick,->-,>=stealth]		(0,3)		arc(90:0:3);
\draw[thick,->-,>=stealth]		(0,-3)		arc(-90:0:3);
\node[below]	at	(3.5,-0.15){$S_\infty$};
\draw[black,fill=black]			(3,0)		circle(0.10cm);
\node[above]	at	(5,0)			{\bluetext{ $\Gamma_1$}};
\node[above] at	(45:3.5)		{$\Gamma_2$};
\node[below]	at	(-45:3.5)	{  \bluetext{ $\Gamma_4$}};
\node[above]	at	(1.5,0)		{ \bluetext{ $\Gamma_3$}};
\end{tikzpicture}
\end{subfigure}
\label{fig-zero-sum}
\end{figure}
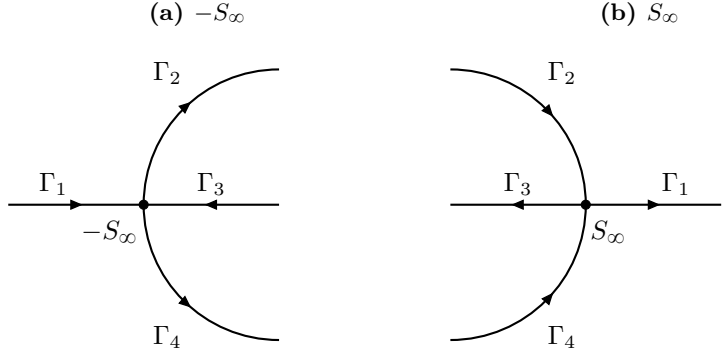
\end{center}
and denote  $\R_\infty = \R \setminus [-S_\infty,S_\infty]$.  
On $\R_\infty$, the scattering matrix $J_1$ admits the  factorization:
\begin{equation}
\label{factor-J1}
J_1(\lambda) 
	=J_{1,-}(\lambda)^{-1}J_{1,+}(\lambda)
	=\twomat{1}{\rho(\lambda)}{0}{1}\twomat{1}{0}{\lambda\overline{\rho(\lambda)}}{1},
\end{equation}
with the same decomposition for $J_5(\lam)$, 
while on $(-S_\infty, S_\infty)$, the scattering matrix $J_3$ admits the  factorization:
\begin{equation}
\label{factor-J3}
J_3(\lambda)
	=J_{3,-}(\lambda)^{-1}J_{3,+}(\lambda)
	=\twomat{1}{0}{-\lambda\overline{\rho_0(\lambda)}}{1}\twomat{1}{-\rho_0(\lambda)}{0}{1}.
\end{equation}

\medskip

(i) The methods of section 3 of \cite{LPS15}  can be used to show that $\rho \in H^{2,2}(\R_\infty)$. It follows from this fact and the explicit factorization \eqref{factor-J1} that  
$$\left. \left[J_{1,-}(\lam)-I\right] \right|_{\R_\infty} \in H^{2,2}(\R_\infty) \;\; \textrm{and} \;\;
\left. \left[J_{1,+}(\lam)-I\right]\right|_{\R_\infty} \in H^{1,1}(\R_\infty).$$
\bluetext{Similarly,} the restrictions of $J_\pm(\lam)-I$ to $(-S_\infty,S_\infty)$ belong to $H^2(-S_\infty,S_\infty)$. The remaining Sobolev estimates all involve the bounded semicircular contours $\Gamma_\infty^\pm$. 
$\Gamma_\infty^\pm$ are open, finite length contours, so $H^2(\Gamma_\infty^\pm)$ is equivalent to $H^2(0,\pm\pi) $ after parametrization by an angle $\theta$.  The $H^2$-norm of a function $f$  controls the $L^\infty$-norm of $f$ and $f'$ and  thus $H^2$ is an algebra by the Leibnitz rule.
Using \eqref{J+} and \eqref{J-}, it suffices to show that  the functions 
$n_{12}^-(x_0,\lam)$, $1/\balpha(\lam)$, and $1/\balpha_0(\lam)$ belong  to $H^2(\Gamma^+_\infty)$
and that the functions $n_{21}^-(x,\lam)$,  $1/\alpha(\lam)$, and $1/\alpha_0(\lam)$ belong to $H^2(\Gamma^-_\infty)$.
This is easily deduced from the Volterra integral equations corresponding to \eqref{n} and the integral representations  for $\alpha$ and $\balpha$ that can be deduced from \eqref{n.T}.

\medskip

(ii)  
The assertions about triangularity follow from the factorizations \eqref{factor-J1} and \eqref{factor-J3} together with the formulas
\eqref{J+} and \eqref{J-}. 

\medskip

(iii)  
Using the relation  \eqref{n.T},  the scattering matrices $J_2$ and $J_4$ are given at the self-intersection point $S_\infty$ respectively by
\begin{align*}
J_2(S_\infty) 
	&=	\twomat{1}{0}{e^{-2ix_0 S_\infty}  \dfrac{n^{-}_{21} (x_0,  S_\infty) }
						{\balpha( S_\infty) \balpha_0( S_\infty)  }}{1}\\[5pt]
     &=	\twomat{1}{0}{e^{-2ix_0 S_\infty} \dfrac{ n^{+}_{21}(S_\infty) }
     					{\balpha_0(S_\infty)  } }{1}  
     		\twomat{1}{0}
     					{  \dfrac{S_\infty\overline{\beta (S_\infty)} n^+_{22} (S_\infty)}
     						{\balpha (S_\infty)\balpha_0(S_\infty) } }
     					{1} \\[5pt]
     &=	\twomat{1}{0}{-S_\infty\overline{\rho_0(S_\infty)} }{1}
     		\twomat{1}{0}{S_\infty\overline{\rho(S_\infty)}}{1}
\end{align*}
and
\begin{align*}
J_4(S_\infty) 
	&=	\twomat{1}{-e^{2ix_0 S_\infty}  \dfrac{n^{-}_{12} (x_0,  S_\infty) }{\alpha( S_\infty) \alpha_0( S_\infty)  }}
						{0}{1}\\[5pt]
	&=	\twomat{1}{-e^{2ix_0 S_\infty} \dfrac{ n^{+}_{12}(S_\infty) }{\alpha_0(S_\infty) }}
						{0} {1} 
			\twomat{1}
						{
								\dfrac{ \beta (S_\infty)
										n^+_{11} (S_\infty) }{ \alpha (S_\infty)\alpha_0(S_\infty) }  
						}
						{0}{1}\\[5pt]
	&=	\twomat{1}{\rho(S_\infty)}{0}{1}\twomat{1}{-\rho_0(S_\infty)}{0 }{1}~.
\end{align*}
The factorizations of $J_2$ and $J_4$ along the arcs are obtained by polynomial  interpolation between $S_{\pm \infty}$ (see for example 
eq.(5.19) of \cite{Liu19}).

We want to establish \eqref{product} for $k=2$, that is,
\begin{equation}
\label{prod1}\prod_{i=1}^4 \widehat{J}_i=I+\mathcal{O}\left(|\lambda-S_\infty|^2  \right).
\end{equation}
Denoting  by $\mathfrak{J}_i$ the first order Taylor polynomial of $J_i$ at $S_\infty$ ,  $i=1,...,4$, proving \eqref{prod1} is equivalent to
proving that
$$\mathfrak{J}_1\mathfrak{J}_2^{-1}=\mathfrak{J}_4\mathfrak{J}_3^{-1}+\mathcal{O}\left(|\lambda-S_\infty|^2  \right).$$
It is clear that $J_1(S_\infty)J_2(S_\infty)^{-1}=J_4(S_\infty)J_3(S_\infty)^{-1}$. We  also have to check that 
$$\left(J_1J_2^{-1} \right)_\lambda(S_\infty)=\left(J_4J_3^{-1} \right)_\lambda(S_\infty).$$
To achieve this, we need to show that 
\begin{equation}
\label{factor'-1}
\dfrac{d}{d\lambda} \dfrac{ e^{-2ix_0 \lambda}   n^{-}_{21} (x_0,  \lambda )}{\balpha( \lambda) \balpha_0( \lambda)  }\bigg\rvert_{\lambda=S_\infty} ={\rho'(S_\infty)}{-\rho'_0(S_\infty)}
\end{equation}
This is done by first letting $\lambda\to \bbR$, which leads to the factorization of $J_4$:
\begin{align*}
	\twomat{1}{-e^{2ix_0 \lambda}  \dfrac{n^{-}_{12} (x_0,  \lambda) }{\alpha( \lambda) \alpha_0( \lambda)  }}{0}{1}=	\twomat{1}{\rho(\lambda)}{0}{1}\twomat{1}{-\rho_0(\lambda)}{0 }{1}~.
\end{align*}
and take derivative along $\bbR$. In the same way, we can show
$$ \dfrac{d}{d\lambda} \dfrac{ e^{2ix_0 \lambda}   n^{-}_{12} (x_0,  \lambda )}{\alpha( \lambda) \alpha_0( \lambda)  }\bigg\rvert_{\lambda=S_\infty}={-S_\infty\overline{\rho'_0(S_\infty)} }-\overline{\rho_0(S_\infty)} +
     		{S_\infty\overline{\rho'(S_\infty)}}+\overline{\rho(S_\infty)}.$$ 		
We thus verify  the $(k-1)$th order product condition for $k=2$ and Part $(i)$  of Theorem \ref{thm:k-regular}holds for the matrix $J$.  We conclude that $J$ is $k$-regular, which in turn implies that $J_+(\lambda)$ satisfies the matching condition \eqref{Hk+} and $J_-(\lam)$ satisfies the matching condition \eqref{Hk-} at the non-smooth point $(S_\infty, 0)$. 
A similar proof shows that the same conclusion holds for $(-S_\infty, 0)$. 
\end{proof}

\bluetext{ The following  propositions stating Lipschitz continuity results can be obtained by the methods of \cite{LPS15, Liu17}, in particular Propositions 3.2 and 3.3 of
\cite{LPS15}. The exclusion of the disk $|\lam| < R$ implies that $|\alpha(\lam)|$ is strictly positive so division by $\alpha$ does not affect the estimates.
Proposition \ref{lip prop},  formulas \eqref{J} -- \eqref{J-}, and the factorizations \eqref{factor-J1} and \eqref{factor-J3}
  imply  also the Lipschitz continuity of the scattering data (Proposition \ref{post lip prop}).}
\begin{proposition}
\label{lip prop}
The maps 
\begin{align*}
q 	         & \mapsto \left. \rho \right|_{\R_\infty} \in H^{2,2}(\R_\infty), &   
q 			& \mapsto n_{21}^-(x_0,\dotarg) \in H^2(\Gamma_\infty^+) \\
q 			& \mapsto n_{12}^-(x_0,\dotarg) \in H^2(\Gamma_\infty^-), &
q 			& \mapsto \frac{1}{\balpha} \in H^2(\Gamma_\infty^+), \\
q 			& \mapsto \frac{1}{\balpha_0} \in H^2(\Gamma_\infty^+), &
q 			& \mapsto 1/\alpha \in H^2(\Gamma_\infty^-),\\
q 			& \mapsto 1/\alpha_0 \in H^2(\Gamma_\infty^-)
\end{align*}
are locally Lipschitz continuous from $H^{2,2}(\R)$ into the respective ranges. Moreover, the map
$$q_{x_0} \mapsto \rho_0 \in H^{2}(\R)$$
is locally Lipschitz continuous from $H^{0,2}(\bbR)$ to $H^{2,0}(\bbR)$.
\end{proposition}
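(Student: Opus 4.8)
The plan is to reduce the Lipschitz continuity of each listed map to the known Lipschitz estimates for the reflection coefficient and the renormalized Jost solutions established in \cite{LPS15} (Propositions 3.2 and 3.3) and in \cite{Liu17}. The key structural observation is that every map on the list is built, by explicit algebraic formulas, out of two basic objects: (a) the reflection coefficient $\rho$ (restricted either to $\R_\infty$ for the full potential $q$, or to all of $\R$ for the compactly-supported-on-a-half-line potential $q_{x_0}$), and (b) the Beals--Coifman/Jost data $n_{21}^-(x_0,\cdot)$, $n_{12}^-(x_0,\cdot)$, $\alpha$, $\balpha$, $\alpha_0$, $\balpha_0$ evaluated on the fixed bounded semicircular arcs $\Gamma_\infty^\pm$. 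So the proof has three layers: first, transfer the known Lipschitz bounds for the $\zeta$-variable scattering transform into the $\lam$-variable via the fixed, smooth change of variables $\lam=\zeta^2$ on each piece of the contour; second, obtain Lipschitz dependence of the Jost solutions on the bounded arcs from the Volterra integral equations \eqref{n}; third, handle the division by $\alpha$, $\balpha$ (and their $x_0$-analogues) using that these stay uniformly bounded away from zero on $\Gamma_\infty^\pm$ for $q$ in a bounded set of $H^{2,2}$.

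More concretely, I would proceed as follows. \textbf{Step 1 (reflection coefficients).} For $q\mapsto \rho|_{\R_\infty}$, note that $\rho(\lam)=\zeta^{-1}r(\zeta)$ with $\lam=\zeta^2$, and on $\R_\infty$ we have $|\zeta|\ge \sqrt{S_\infty}>0$, so $\zeta^{-1}$ is a bounded smooth multiplier; the map $q\mapsto r$ is locally Lipschitz $H^{2,2}(\R)\to H^{2,2}$ of the relevant contour piece by \cite[\S3]{LPS15}, and composing with the bounded bijection $\zeta\mapsto\zeta^2$ (which is a diffeomorphism away from $\zeta=0$ and hence bounded on weighted $H^2$ of $\R_\infty$) gives $q\mapsto\rho|_{\R_\infty}\in H^{2,2}(\R_\infty)$ locally Lipschitz. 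For $q_{x_0}\mapsto\rho_0\in H^{2,2}(\R)$, the same argument applies to the cutoff potential, together with the fact (used already in Section \ref{sec:direct}) that $q\mapsto q_{x_0}$ is itself Lipschitz $H^{2,2}(\R)\to H^{2,2}(\R)$ since it is just multiplication by a characteristic function, which is bounded on $H^{2,2}$ up to the endpoint regularity that is controlled by the $H^{2,2}$ norm. \textbf{Step 2 (Jost data on the arcs).} The functions $n^-(x_0,\lam)$ solve the Volterra equation associated to \eqref{n.de} with data at $x=-\infty$; since $\Gamma_\infty^\pm$ is a fixed compact arc and the kernel depends analytically and Lipschitz-continuously on the coefficient $Q,P$ (hence on $q$ in $L^1\cap L^2$, controlled by $\|q\|_{H^{2,2}}$), a standard Neumann-series/contraction argument in $H^2(\Gamma_\infty^\pm)$—which is a Banach algebra—gives that $q\mapsto n_{21}^-(x_0,\cdot)\in H^2(\Gamma_\infty^+)$ and $q\mapsto n_{12}^-(x_0,\cdot)\in H^2(\Gamma_\infty^-)$ are locally Lipschitz, with the second-resolvent identity supplying the Lipschitz bound. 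The integral representations \eqref{a.int}, \eqref{ba.int} (in the $n$-variables, read off from \eqref{n.T}) then give $\alpha,\balpha$ as explicit $H^2$-functionals of these Jost data, hence locally Lipschitz in $q$; likewise $\alpha_0,\balpha_0$ in terms of $q_{x_0}$. \textbf{Step 3 (reciprocals).} On $\Gamma_\infty^+$ the quantity $\balpha(\lam)$ is continuous and, because $R$ was chosen so that all zeros of $a,\ba$ lie inside $B(0,R)$ and none lie on $\R$ (by the determinant identity $|a(it)|^2-|b(it)|^2=1$ ruling out imaginary-axis zeros, hence none on $\Gamma_\infty$ which corresponds to $|\zeta|=R$), it is bounded below in modulus; moreover this lower bound is uniform for $q$ in a bounded subset of $H^{2,2}$ (using the continuity of $q\mapsto\balpha$ and compactness of the arc, or the quantitative estimate \cite[Prop.~3.2.5]{Liu17}). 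Then $g\mapsto 1/g$ is Lipschitz on $\{g\in H^2(\Gamma_\infty^+): \inf|g|\ge c\}$ because $1/g_1-1/g_2=(g_2-g_1)/(g_1 g_2)$ and $H^2$ of a compact arc is a Banach algebra closed under inversion of elements bounded away from zero. Combining Steps 1--3 yields local Lipschitz continuity of every map in the list.

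The main obstacle I anticipate is not any single estimate but rather the bookkeeping of \emph{uniformity}: one must verify that the cutoff point $x_0$ and the radius $R$ (equivalently $S_\infty=R^2$) can be chosen once and for all, valid simultaneously for every $q$ in a fixed bounded ball of $H^{2,2}(\R)$, so that the contour $\Gamma$, the arcs $\Gamma_\infty^\pm$, and the lower bound on $|\alpha|,|\balpha|$ do not drift as $q$ varies. This is exactly the point flagged in the text ("both $x_0$ and $R$ may be chosen uniformly for $q$ in a bounded subset of $H^{2,2}(\R)$") and it rests on the explicit condition \eqref{x0-condition} together with the $H^{2,2}$-controlled radius bound of \cite[Prop.~3.2.5]{Liu17}; once this uniformity is in hand, the rest reduces to the cited $\zeta$-variable results of \cite{LPS15,Liu17} and routine Banach-algebra manipulations on fixed compact contours. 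A secondary technical nuisance is the weighted space $H^{2,2}(\R_\infty)$ rather than $H^2$: one must check the change of variables $\zeta\mapsto\zeta^2$ and multiplication by $\zeta^{-1}$ respect the polynomial weight $\langle\lam\rangle^2$, which follows because on $\R_\infty$ one has $\langle\lam\rangle\sim\langle\zeta\rangle^2$ and $|\zeta|^{-1}\le S_\infty^{-1/2}$, so the weighted norms are comparable under the substitution.
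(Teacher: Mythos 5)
Your proposal is essentially the paper's own argument: the authors dispose of this proposition in a short remark invoking the methods of \cite{LPS15,Liu17} (in particular Propositions 3.2 and 3.3 of \cite{LPS15}) for the reflection-coefficient and Jost-function estimates, together with the observation that excluding the disk $|\lam|<S_\infty$ keeps $|\alpha|,|\balpha|$ bounded below on $\Gamma_\infty^\pm$ so that division by them does not affect the estimates --- which is exactly your Steps 1--3. One small caution: your aside that $q\mapsto q_{x_0}=q\chi_{(x_0,\infty)}$ is bounded on $H^{2,2}(\R)$ is false as stated (truncation creates a jump at $x_0$ and destroys $H^2$ regularity), but it is also not needed for the proposition as written, since the map to $\rho_0$ is taken with $q_{x_0}$ itself as the input.
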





\begin{proposition}
\label{post lip prop}
The maps
$$
\begin{array}{rll}
q &	\mapsto	J_1^\pm(\lam) - I 	&\in H^{2,2}(\dee \Omega_2)\\[3pt]
q &	\mapsto	 J_2(\lam)-I 			&\in H^2(\dee \Omega_3)\\[3pt]
q &	\mapsto J_3^\pm(\lam) - I 	&\in H^{1,1}(\dee \Omega_4)\\[3pt]
q &	\mapsto J_4(\lam)-I 			&\in H^2(\dee \Omega_4)
\end{array}
$$
are locally Lipschitz mappings from $H^{2,2}(\R)$ into their respective ranges.
\end{proposition}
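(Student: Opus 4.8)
The plan is to reduce the statement to Proposition \ref{lip prop} together with the continuity of pointwise multiplication on the relevant Sobolev spaces. First I would note that, entry by entry, each of $J_1^\pm$, $J_2$, $J_3^\pm$, $J_4$ is an explicit finite algebraic expression in the scattering data functions $\rho\restriction_{\R_\infty}$, $\rho_0$, $n_{21}^-(x_0,\dotarg)$, $n_{12}^-(x_0,\dotarg)$, $1/\alpha$, $1/\alpha_0$, $1/\balpha$, $1/\balpha_0$ of Proposition \ref{lip prop}, their complex conjugates, and the fixed smooth functions $\lambda\mapsto\lambda$ and $\lambda\mapsto e^{\pm 2ix_0\lambda}$: this is exactly what the factorizations \eqref{factor-J1}, \eqref{factor-J3} and the formulas \eqref{J}--\eqref{J-} say. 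For instance, on $\R_\infty$ one has $J_1^-=\twomat{1}{\rho}{0}{1}$ and $J_1^+=\twomat{1}{0}{\lambda\overline{\rho}}{1}$; on $(-S_\infty,S_\infty)$, $J_3^-=\twomat{1}{0}{-\lambda\overline{\rho_0}}{1}$ and $J_3^+=\twomat{1}{-\rho_0}{0}{1}$; and on the arcs $\Gamma_\infty^+$, $\Gamma_\infty^-$ the matrices $J_2-I$, $J_4-I$ are strictly triangular with the single entries $e^{-2ix_0\lambda}\,n_{21}^-(x_0,\lambda)\cdot(1/\balpha)\cdot(1/\balpha_0)$ and $-e^{2ix_0\lambda}\,n_{12}^-(x_0,\lambda)\cdot(1/\alpha)\cdot(1/\alpha_0)$ respectively. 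Thus $q\mapsto J_i^\pm-I$ factors as the map $q\mapsto(\text{these scattering data})$, locally Lipschitz by Proposition \ref{lip prop}, followed by a fixed ``polynomial'' map between Banach spaces built from finitely many pointwise products and multiplications by the fixed factors $\lambda$ and $e^{\pm 2ix_0\lambda}$.

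It then remains to record two elementary facts. First, the target spaces are stable under these operations: $H^2$ on each compact semicircular arc $\Gamma_\infty^\pm$ is a Banach algebra, and multiplication by $\lambda$ or by $e^{\pm 2ix_0\lambda}$ (smooth with bounded derivatives on a compact arc) is bounded on $H^2$ there; on $\R_\infty$ the weighted spaces $H^{2,2}(\R_\infty)$ and $H^{1,1}(\R_\infty)$ are Banach algebras (using $H^1\hookrightarrow L^\infty$ in one dimension; see Appendix \ref{app:Sobolev}), complex conjugation is an isometry, and multiplication by $\lambda$ maps $H^{2,2}(\R_\infty)$ boundedly into $H^{1,1}(\R_\infty)$ — the one-weight loss already reflected in the target spaces of Theorem \ref{thm:scattering data}. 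Second, any map between Banach spaces that is a finite sum of bounded multilinear maps is locally Lipschitz, by the telescoping identity $\prod_i a_i-\prod_i b_i=\sum_j\big(\prod_{i<j}a_i\big)(a_j-b_j)\big(\prod_{i>j}b_i\big)$ restricted to bounded sets. Composing the locally Lipschitz map of Proposition \ref{lip prop} with such a polynomial map is again locally Lipschitz, and running through the four cases reproduces exactly the claimed mapping properties, the target Sobolev space in each case being the one dictated by the factorization of Theorem \ref{thm:scattering data}.

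The one ingredient that is not purely formal — and the reason Proposition \ref{lip prop} is phrased in terms of the reciprocals $1/\alpha$, $1/\balpha$, $1/\alpha_0$, $1/\balpha_0$ rather than $\alpha$, $\balpha$ — is the division occurring in \eqref{J+}--\eqref{J-}; I expect this to be the only step needing genuine work, and it is already carried out inside Proposition \ref{lip prop}. It rests on the uniform lower bound $\inf|\alpha(\lambda)|>0$ (and likewise for $\balpha$, $\alpha_0$, $\balpha_0$) over the relevant contour, valid uniformly for $q$ in bounded subsets of $H^{2,2}(\R)$: all zeros of $a$, $\ba$ — hence of $\alpha$, $\balpha$ — lie in $B(0,R)$ with $R$ chosen uniformly over such sets (cf.\ \cite[Proposition 3.2.5]{Liu17} and the discussion after \eqref{x0-condition}), while $\Gamma_\infty^\pm\subset\{|\lambda|=S_\infty=R^2\}$ and $\R_\infty\subset\{|\lambda|>S_\infty\}$ avoid $B(0,R)$; the smallness condition \eqref{x0-condition} plays the same role for $\alpha_0$, $\balpha_0$. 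Given this, $x\mapsto x^{-1}$ is Lipschitz on the relevant range and $q\mapsto 1/\alpha$ is locally Lipschitz into $H^2(\Gamma_\infty^-)$ as in Proposition \ref{lip prop}; the remainder of the argument is the multiplicative bookkeeping described above.
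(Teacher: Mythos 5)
Your proposal is correct and follows essentially the same route as the paper, which likewise derives Proposition \ref{post lip prop} from Proposition \ref{lip prop}, the formulas \eqref{J}--\eqref{J-}, and the factorizations \eqref{factor-J1} and \eqref{factor-J3}, with the lower bound on $|\alpha|$ off the disk $B(0,R)$ justifying the divisions. You simply make explicit the multiplicative bookkeeping (Banach-algebra properties, the weight loss under multiplication by $\lambda$, and the telescoping estimate for products) that the paper leaves implicit.
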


\subsection{Time evolution of the scattering data}
\label{time-evol}
A key property of the inverse scattering method is the simple time evolution of its scattering data.
In \cite{Liu17}, we 
calculated the time evolution of the scattering data where they reduce to a reflection coefficient and  discrete data. 
We need to complement the analysis by examining the time evolution of the jump matrix on the additional section of the contour $\Gamma_\infty$ (see Figure \ref{aug-2}).
 As before, we work in the $\zeta$ variable and carry out the change of variable $\zeta\to\lambda$. Given $\bfM^+(x,t;\zeta)=\bfM^-(x,t;\zeta)v_x(t; \zeta)$ where $v_x(t; \zeta)=e^{-i\zeta^2 x\ad\sigma} v(t; \zeta)$, we compute the time derivative
\begin{equation}
\label{RHP.t}
\bfM^+(x,t;\zeta)_t=\bfM^-(x,t;\zeta)_t v_x(t; \zeta)+\bfM^-(x,t;\zeta)v_x(t; \zeta)_t.
\end{equation}
We recall  that
$\bfM^\pm$  are fundamental solutions for the Lax equations
\begin{subequations}
\label{RHP.Lax}
\begin{align}
\label{RHP.Lax.x}
\frac{\dee M}{\dee x}(x,t;\zeta)  
	&	= -i\zeta^2 \ad \sigma (M ) + \zeta Q(x,t) M + P(x,t) M,\\[3pt]
\label{RHP.Lax.t}
\frac{\dee M}{\dee t}(x,t;\zeta) 
	&=-2i\zeta^4 \ad \sigma (M) +  A(x,t;\zeta)M.
\end{align}
\end{subequations}
where  $\sigma$, $P$, $Q$ are given in terms of $q=q(x,t)$  by \eqref{sigma-Q-P} and
\begin{align}
\label{RHP.A}
A(x,t;\zeta) 
	&= 			2 	\zeta^3 \offmat{q}{-\qbar}
					+ 	i\zeta^2 \diagmat{|q|^2}{-|q|^2} 
					+	i\zeta\offmat{q_x}{\qbar_x}\\[5pt]
\nonumber
	&\quad 		+ \frac{i}{4} \diagmat{|q|^4}{-|q|^4} 
					+ \frac{1}{2} \diagmat{-q_x \qbar + q \qbar_x}{-q\qbar_x  + q_x \qbar}.
\end{align}
Taking the limit $x\to +\infty$ in \eqref{RHP.t}, using the normalization of $\bfM^\pm$ at $+\infty$, and using the fact that $\ad\sigma$ is a derivation, we obtain 
$$v_x(t; \zeta)_t=-2i\zeta^4\ad\sigma v_x(t; \zeta).$$ 
Integrating we obtain 
\begin{equation}
\label{v.ev.zeta}
v_x(t;\zeta)=e^{-2i\zeta^4 t\ad\sigma }v_x(0; \zeta)
\end{equation}
or equivalently for $J_x(\lambda)=e^{-i\lambda x\ad\sigma} J(0; \lambda)$

$$
J_x(\lambda, t)=e^{-2i\lambda^2 t\ad\sigma }J_x(\lambda).
$$

The map $(f,t) \mapsto e^{-2i\lam^2 t}f$ is a bounded continuous map from $X \times [-T,T]$ to $X$ for $X=H^{2,2}(\Omega_2)$, $H^{1,1}(\Omega_1)$, $H^2(\Omega_3)$ and $H^2(\Omega_4)$. This map is also  Lipschitz continuous in $X$ uniformly for $f$ in a bounded subset of $X$ and $t \in [-T,T]$, for a fixed $T>0$. 

From Proposition \ref{post lip prop} and these facts, we deduce the following continuity result.

\begin{proposition}
\label{scattering-continuous}
Suppose that $q_0 \in H^{2,2}(\R)$ and that $J(\lam)$ is the scattering data associated to $q_0$. Denote by $J_\pm(\lam,t)$ the matrices $e^{i\lam^2 t\ad \sigma} J_\pm(\lam)$ where $J_\pm(\lam)$ are the factors given in Theorem \ref{thm:scattering data}. For any $T>0$, the maps
\begin{align*}
H^{2,2}(\R) \times [-T,T]	\ni	(q_0,t)	&	\mapsto		J_-(\lam,t) - I	\in 	H^{2,2}(\dee \Omega_2)\\
H^{2,2}(\R) \times [-T,T]	\ni	(q_0,t)	&	\mapsto		J_-(\lam,t) - I \in 		H^2(\dee \Omega_3)\\
H^{2,2}(\R) \times [-T,T]	\ni	(q_0,t)	&	\mapsto		J_+(\lam,t) - I \in 	H^{2}(\dee \Omega_4)\\
H^{2,2}(\R) \times [-T,T]	\ni	(q_0,t)	&	\mapsto		J_+(\lam,t) - I \in 	H^{1,1}(\dee \Omega_1)
\end{align*}
are all continuous, and uniformly Lipschitz in $q_0$ for $t \in [-T,T]$ and $q_0$ in a bounded subset of $H^{2,2}(\R)$. 
\end{proposition}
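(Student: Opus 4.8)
The plan is to express each of the four maps as a composition of the direct scattering map (Proposition~\ref{post lip prop}) with the multiplier map $(f,t)\mapsto e^{\pm 2i\lambda^2 t}f$, whose mapping properties were recorded in the paragraph preceding the proposition, and then to use the elementary fact that composing a locally Lipschitz map with a map that is jointly continuous and uniformly Lipschitz in one of its arguments yields a map enjoying the same two properties in the first argument.

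First I would record the action of the conjugation $e^{i\lambda^2 t\ad\sigma}$ on $2\times 2$ matrices: it fixes the diagonal entries and multiplies the $(1,2)$ entry by $e^{2i\lambda^2 t}$ and the $(2,1)$ entry by $e^{-2i\lambda^2 t}$; in particular it fixes $I$, so $J_\pm(\lambda,t)-I = e^{i\lambda^2 t\ad\sigma}(J_\pm(\lambda)-I)$. The one structural input is part (ii) of Theorem~\ref{thm:scattering data}: each $J_\pm(\lambda)-I$ is \emph{strictly} triangular on each contour component --- lower on $\partial\Omega_1$ and $\partial\Omega_3$, upper on $\partial\Omega_2$ and $\partial\Omega_4$ --- so the $\ad\sigma$-conjugation collapses to multiplication by a single scalar:
\begin{align*}
J_+(\lambda,t)-I &= e^{-2i\lambda^2 t}(J_+(\lambda)-I)\ \text{ on }\ \partial\Omega_1, &
J_-(\lambda,t)-I &= e^{2i\lambda^2 t}(J_-(\lambda)-I)\ \text{ on }\ \partial\Omega_2, \\
J_-(\lambda,t)-I &= e^{-2i\lambda^2 t}(J_-(\lambda)-I)\ \text{ on }\ \partial\Omega_3, &
J_+(\lambda,t)-I &= e^{2i\lambda^2 t}(J_+(\lambda)-I)\ \text{ on }\ \partial\Omega_4.
\end{align*}

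Next I would invoke the multiplier estimate stated just before the proposition, applied with $X = H^{2,2}(\partial\Omega_2)$, $H^2(\partial\Omega_3)$, $H^2(\partial\Omega_4)$ and $H^{1,1}(\partial\Omega_1)$ (the multiplier $e^{+2i\lambda^2 t}$ is handled exactly as $e^{-2i\lambda^2 t}$, e.g.\ by $t\mapsto -t$): multiplication by $e^{\pm 2i\lambda^2 t}$ is bounded on $X$ with operator norm $\le C(T)$ uniformly for $t\in[-T,T]$, and $(f,t)\mapsto e^{\pm 2i\lambda^2 t}f$ is jointly continuous on $X\times[-T,T]$. Composing with the (locally Lipschitz, hence continuous) maps $q_0\mapsto J_\pm(\lambda)-I\in X$ assembled from Proposition~\ref{post lip prop} yields joint continuity of all four maps. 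For the uniform Lipschitz bound, fix $t\in[-T,T]$ and $q_0^{(1)},q_0^{(2)}$ in a bounded set $\mathcal B\subset H^{2,2}(\R)$; then
\[
\|J_\pm(\lambda,t;q_0^{(1)})-J_\pm(\lambda,t;q_0^{(2)})\|_X
\le C(T)\,\|J_\pm(\lambda;q_0^{(1)})-J_\pm(\lambda;q_0^{(2)})\|_X
\le C(T)\,L(\mathcal B)\,\|q_0^{(1)}-q_0^{(2)}\|_{H^{2,2}},
\]
with $C(T)L(\mathcal B)$ independent of $t\in[-T,T]$. (Here one uses, as already in Proposition~\ref{post lip prop}, that $x_0$ and $R$ may be chosen uniformly over $\mathcal B$, so a single Lipschitz constant works.)

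There is no genuine obstacle: the statement is essentially bookkeeping on top of Proposition~\ref{post lip prop}. The only point requiring a little care --- and the only reason this is not a one-line remark --- is that the $\lambda$-derivatives of $e^{\pm 2i\lambda^2 t}$ grow polynomially, so it is not immediately obvious that multiplication by this factor preserves the \emph{weighted} spaces $H^{2,2}(\R_\infty)$ and $H^{1,1}(\R_\infty)$; this is absorbed by interpolation bounds of the type $\|\langle\lambda\rangle f'\|_{L^2(\R_\infty)}\lesssim\|f\|_{H^{2,2}(\R_\infty)}$, which is precisely the content of the assertion displayed before the proposition and can simply be quoted. On the bounded semicircular subcontours $\Gamma_\infty^\pm$ there is nothing to check, since there $e^{\pm 2i\lambda^2 t}$ is smooth with all $\lambda$-derivatives bounded uniformly in $t\in[-T,T]$.
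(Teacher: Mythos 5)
Your proposal is correct and follows essentially the same route as the paper: the paper likewise deduces the proposition by combining Proposition \ref{post lip prop} with the boundedness, joint continuity, and uniform Lipschitz property of the multiplier $(f,t)\mapsto e^{\pm 2i\lambda^2 t}f$ on the four target spaces. Your additional observations --- that the strict triangularity from Theorem \ref{thm:scattering data}(ii) reduces the $\ad\sigma$-conjugation to a scalar multiplier, and that preservation of the weighted spaces $H^{2,2}$ and $H^{1,1}$ rests on bounds of the form $\norm[L^2]{\langle\lambda\rangle f'}\lesssim\norm[H^{2,2}]{f}$ --- merely make explicit what the paper leaves as an assertion.
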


\subsection{Auxiliary scattering matrix}
\label{aux-scatt}

In section \ref{sec:scattering-zeta}, we have chosen $x_0\in \bbR$ such that the cut-off potential   $q_{x_0}=q\chi_{ (x_0, \infty )  }$ satisfies the  smallness condition \eqref{x0-condition}. 
By increasing $x_0$ if necessary,
we assume  $\widetilde{q}_{x_0}=q\chi_{ (- \infty, -x_0 )}$ also satisfies \eqref{x0-condition}.
Let $\widetilde{\bfN }$ be constructed in the same way as  $\bfN$ (see \eqref{bfN}) but with potential   ${q}_0$ changed to  $ \widetilde{q}_0$ with normalization at $x\to-\infty$. We define the auxiliary matrix $s$:
\begin{equation}
\label{auxiliary}
s(\lambda)=e^{ix\lambda\ad\sigma}\widetilde{\bfN}^{-1}(x,\lambda)\bfN(x,\lambda).
\end{equation}
For $\lambda\in \Omega_1\cup\Omega_2$
\begin{equation}
\label{delta-alpha}
s(\lambda)=\twomat{\delta(\lambda)^{-1}}{0}{0}{\delta(\lambda)} ,\;\;\;  \;\; \delta(\lambda)=\begin{cases}
\balpha(\lambda) &\quad  \text{Im} \lambda>0\\
\alpha(\lambda)  &\quad  \text{Im}\lambda <0 
\end{cases}
~.
\end{equation}
The jump matrix $\widetilde{J}$ for $\widetilde{ \bfN }$ is obtained from $J$ by conjugation, as
$\widetilde{J}=s_-^{-1}J s_+.$
In analogy with Theorem \ref{thm:scattering data}, we have:
\begin{theorem}
\label{scattering data tilde}
The matrix $\widetilde{J}=s_-^{-1}J s_+$ admits a triangular factorization $\tildJ(\lambda)=\tildJ_-^{-1}(\lambda) \tildJ_+(\lambda)$ where:
\begin{enumerate}
\item[(i)]
$\tildJ_+(\lambda)-I\in H^{2,2}(\partial\Omega_1)$, $\tildJ_{-}(\lambda)-I\in H^{1,1}(\partial \Omega_2) $, $\tildJ_{-}(\lambda)-I\in H^{2}(\partial \Omega_3) $ and
$\tildJ_{+}(\lambda)-I\in H^{2}(\partial \Omega_4)$.
\smallskip
\item[(ii)] $\tildJ_+\restriction_{\partial\Omega_1}-I$ and $\tildJ_-\restriction_{\partial\Omega_3}-I$ are strictly upper triangular while $\tildJ_-\restriction_{\partial\Omega_2}-I$ and $\tildJ_+\restriction_{\partial\Omega_4}-I$ are strictly lower triangular.
\end{enumerate}
\end{theorem}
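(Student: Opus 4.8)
The plan is to derive Theorem~\ref{scattering data tilde} from Theorem~\ref{thm:scattering data} by tracking how the conjugation $\tildJ = s_-^{-1} J s_+$ acts on the triangular factorization of $J$ already established. The matrix $s$ is diagonal on $\Omega_1 \cup \Omega_2$ by \eqref{delta-alpha}, so the key initial observation is that conjugating a strictly lower (resp. upper) triangular matrix by a diagonal matrix preserves its strictly lower (resp. upper) triangular structure. This already suggests that the triangularity in part (ii) should flip relative to Theorem~\ref{thm:scattering data} precisely on the arcs $\partial\Omega_1$ and $\partial\Omega_2$ (where $s$ is the nontrivial diagonal $\mathrm{diag}(\delta^{-1},\delta)$) but, crucially, I would first determine what $s$ equals on $\partial\Omega_3, \partial\Omega_4$. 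Since $\widetilde{\bfN}$ uses the \emph{left} normalization at $x \to -\infty$ and $\bfN$ the right normalization, and since the discrepancy comes from the interior-circle construction of the two augmented solutions, I expect $s$ to reduce to $I$ on the parts of the contour where both are built from the same Volterra pieces, or to a controlled diagonal/off-diagonal factor elsewhere. Computing $s$ on all of $\Gamma$ is the first concrete step.

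The \textbf{second step} is to read off the factorization $\tildJ_\pm$. Writing $J = J_-^{-1} J_+$ with $J_\pm$ as in Theorem~\ref{thm:scattering data}, one has
\[
\tildJ = s_-^{-1} J_-^{-1} J_+ s_+ = \left(J_- s_-\right)^{-1} \left(s_-^{-1} J_+ s_+\right),
\]
which would be the desired factorization \emph{if} $s_- = s_+$ along each smooth arc, i.e. if $s$ has no jump there; since $s$ is analytic (being a ratio of analytic fundamental solutions / $\alpha,\balpha$) in each $\Omega_j$, the boundary values $s_\pm$ from the two sides of a given arc generally differ, and one must be more careful — the natural guess is $\tildJ_\pm := s_\mp^{-1} J_\pm s_\pm$ chosen so the diagonal/triangular bookkeeping works out, or one absorbs the jump of $s$ itself. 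I would verify which arc borders which $\Omega_j$ (using Figures~\ref{aug-2}, \ref{fig-Gamma+-comp}, \ref{fig-Gamma--comp}), substitute \eqref{delta-alpha}, and check: on $\partial\Omega_1$, conjugation by $\mathrm{diag}(\delta^{-1},\delta)$ sends the strictly-lower-triangular $J_+\!\restriction_{\partial\Omega_1}-I$ of Theorem~\ref{thm:scattering data}(ii) to a strictly lower triangular matrix — but the claim here is \emph{strictly upper triangular}, so in fact the flip must come from swapping the roles of $J_+$ and $J_-$ induced by switching from right to left normalization, not merely from the diagonal conjugation. Pinning down this interplay is the heart of the argument.

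For the \textbf{Sobolev regularity} in part (i), the strategy is purely algebraic once the formulas are in hand: $\tildJ_\pm - I$ is built from $J_\pm - I$, $s_\pm - I$, and products thereof, and each factor lies in the relevant $H^k$ or $H^{k,k}$ space by Theorem~\ref{thm:scattering data}(i) together with the Lipschitz/regularity statements for $\alpha, \balpha$ (hence $\delta$ and $\delta^{-1}$, using that $|\alpha|$ is bounded below away from the disk $|\lambda|<R$, as noted after Proposition~\ref{post lip prop}). Since $H^k$ of a bounded contour is a Banach algebra and $H^{k,k}(\R_\infty)$ behaves well under multiplication by bounded $H^k$ functions, the membership statements follow. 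The weights migrate correctly: $H^{2,2}$ attaches to $\partial\Omega_2$ for $J$ but to $\partial\Omega_1$ for $\tildJ$, consistent with the left/right swap. One should also note the product condition at $\pm S_\infty$ is preserved under conjugation by the analytic (hence Taylor-expandable) matrix $s$, so $\tildJ$ remains $2$-regular — though this is not explicitly claimed in the statement, it is implicitly needed and follows from Theorem~\ref{thm:k-regular} applied to $\tildJ$.

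The \textbf{main obstacle} I anticipate is bookkeeping the orientation and normalization conventions correctly — specifically, getting the triangularity flip (lower $\leftrightarrow$ upper) and the weight migration ($\partial\Omega_2 \leftrightarrow \partial\Omega_1$) to come out right requires carefully disentangling two separate effects: (a) the diagonal conjugation by $s = \mathrm{diag}(\delta^{-1},\delta)$, which by itself preserves triangularity, and (b) the change from right-normalized $\bfN$ to left-normalized $\widetilde{\bfN}$, which reverses the roles of the $+$ and $-$ factors. It is the composition of these that produces the stated result, and an error in either sign or side would give the wrong triangular structure. Once the correct formula for $s$ on every arc is established and the factorization $\tildJ = (\text{correction})\cdot J \cdot (\text{correction})$ is written explicitly, the remaining estimates are routine applications of the algebra property of the relevant Sobolev spaces and the already-established regularity of the scattering data.
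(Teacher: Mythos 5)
The paper states Theorem \ref{scattering data tilde} with no proof (it is introduced only ``in analogy with Theorem \ref{thm:scattering data}''), so there is nothing to compare against line by line; judged on its own, your proposal is an outline rather than a proof, and the step you yourself call ``the heart of the argument'' is precisely where the gap sits. You correctly observe that conjugating by the diagonal matrix $s=\diagmat{\delta^{-1}}{\delta}$ preserves triangularity and therefore cannot by itself flip $J_+\restriction_{\partial\Omega_1}-I$ from strictly lower to strictly upper triangular, but you never supply the mechanism that does. On $\R_\infty$ that mechanism is the \emph{opposite} algebraic factorization of the jump: instead of the upper--lower splitting \eqref{factor-J1} one must write
\begin{equation*}
J_1=\twomat{1}{0}{\dfrac{\lam\overline{\rho}}{1+\lam|\rho|^2}}{1}\,\diagmat{1+\lam|\rho|^2}{(1+\lam|\rho|^2)^{-1}}\,\twomat{1}{\dfrac{\rho}{1+\lam|\rho|^2}}{0}{1},
\end{equation*}
and then use the identity $1+\lam|\rho|^2=(\alpha\balpha)^{-1}=(\delta_+\delta_-)^{-1}$ together with the boundary values $\delta_+=\balpha$, $\delta_-=\alpha$ so that the conjugation $s_-^{-1}(\cdot)\,s_+$ disposes of the middle diagonal factor and leaves outer factors of the \emph{flipped} triangularity, with entries $\rho\,\alpha\balpha$ (still in $H^{2,2}$) and $\lam\overline{\rho}\,\alpha\balpha$ (still in $H^{1,1}$), which is exactly how the weights migrate from $\partial\Omega_2$ to $\partial\Omega_1$ and from $\partial\Omega_1$ to $\partial\Omega_2$. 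Your candidate $\tildJ_\pm=s_\mp^{-1}J_\pm s_\pm$ cannot be right: each such factor is triangular of the \emph{same} type as $J_\pm$, contradicting part (ii).

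A second, independent gap concerns the arcs. On $\Gamma_\infty^+$ the jump $J$ is strictly lower triangular by \eqref{J+}, while the theorem asserts that $\tildJ$ there (as the common restriction of $\tildJ_+\restriction_{\partial\Omega_1}$ and $\tildJ_-\restriction_{\partial\Omega_3}$) is strictly upper triangular. Formula \eqref{delta-alpha} gives $s$ only on $\Omega_1\cup\Omega_2$; the boundary value $s_-$ along $\Gamma_\infty^+$ is taken from $\Omega_3$, where $s=e^{ix\lam\ad\sigma}\widetilde{\bfN}^{-1}\bfN$ is built from the two interior parametrices (cutoffs at $x_0$ and $-x_0$, normalized at $+\infty$ and $-\infty$ respectively) and is \emph{not} diagonal. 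Producing the claimed triangularity on $\partial\Omega_3$ and $\partial\Omega_4$ therefore requires an explicit computation of $s$ inside the circle, analogous to \eqref{v-circle-+}--\eqref{v-circle--}, followed by a check that $s_-^{-1}Js_+$ is upper (resp.\ lower) triangular there. You flag the need to compute $s$ on these regions as your ``first concrete step'' but never carry it out, so neither the triangularity in (ii) nor the $H^2$ memberships in (i) are established on the arcs. Your Sobolev bookkeeping in the final step is sound in principle (algebra property of $H^2$ on bounded arcs, lower bound on $|\alpha|$ outside $B(0,R)$), but it cannot be executed until the explicit formulas for $\tildJ_\pm$ exist.
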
 
\begin{remark}
The reason for working with the Beals-Coifamn solutions normalized at $-\infty$ is to obtain the desired decay in $x$ at $-\infty$. The basic idea is to guarantee that the Fourier variable $|\xi|\geq |x|$. See \cite[Lemma 2.3]{Zhou98} for details.
\end{remark}

%
%

\section{Unique Solvability of the RHP}
\label{sec:unique-solv}
The goal of this section is to prove the unique solvability of the Riemann-Hilbert problem \ref{RHP.lambda} on the contour $\Gamma = \R \cup \Gamma_\infty$ shown in Figure \ref{aug-2}.

\begin{RHP}
\label{RHP.lambda}
Fix $x \in \bbR$. Find a row vector-valued  function 
$\bfN(x,\dotarg)$ on $\C \setminus \Gamma$ with the following properties:
\begin{enumerate}
\item[(i)](Analyticity) $\bfN(x, z)$ is an analytic function of $z$ for $z\in \mathbb{C}\setminus\Gamma $,
\item[(ii)] (Normalization) ${\bfN}(x,z)=(1 ,0)+\bigO{z^{-1}}$ as $z \rightarrow\infty$,
and
\item[(iii)] (Jump condition) For each $\lambda\in\Gamma $, $\bfN$ has continuous 
boundary values
$\bfN_{\pm}(\lambda)$ as $z \rarr \lambda$ from $\Omega_\pm$. 
Moreover, the jump relation 
$$\bfN_+(x,\lambda)=\bfN_-(x,\lambda) J_x(\lambda)$$ 
holds, where 
\begin{align*}
J_x(\lambda)
	&=	e^{-i\lambda x \ad \sigma} 
		\begin{cases}
				\ttwomat{ 1+\lambda|\rho(\lambda)|^2}{\rho(\lambda)}{\lambda\overline{\rho(\lambda)}}{1},
				&	\lambda\in \R_\infty
				\\	
				\\
				\ttwomat{1} {-\rho_0(\lambda)}{-\lambda\overline{\rho_0(\lambda)}}  { 1+\lambda|\rho_0(\lambda)|^2},
				&	\lam \in(-S_\infty, S_\infty)
				\\ \\
				\ttwomat{1}{0}{e^{-2ix_0\lambda}  \dfrac{n^{-}_{21} (x_0, \lambda) }{\balpha(\lambda) \balpha_0(\lambda)  }}{1}
				& \lambda \in \Gamma_\infty^+, \\
				\\
				\ttwomat{1}{-e^{2ix_0\lambda}\dfrac{n^{-}_{12} (x_0, \lambda) }{\alpha(\lambda) \alpha_0(\lambda)  }}{0}{1} 
				&	\lambda \in  \Gamma_\infty^-.
		\end{cases}
\end{align*}

\end{enumerate}
\end{RHP}
\begin{definition}
\label{def-null-lambda}
We say that  the row vector-valued function $\bfN(x,z)$ is a \emph{null vector} for RHP \ref{RHP.lambda} if $\bfN(x,z)$ satisfies (i) and (iii) above but $\bfN(x,z) = \bigO{z^{-1}}$ as $|z| \rarr \infty$. 
\end{definition}

The scattering data that determine the jump matrix $J$ are the functions
$$
SD =  
\left(
	\rho, \rho_0, \alpha, \alpha_0, \balpha_0, n_{12}^-(x_0,\dotarg), n_{21}^-(x_0,\dotarg)
 \right).
$$
 Although these functions are not independent, for the purpose of proving existence and uniqueness of solutions to RHP \ref{RHP.lambda} we may consider them so. Recalling \eqref{R-infty}, we seek a Banach space $Y_0$, consisting of functions $\rho: \bbR_\infty\to\bbC$, with the following properties:
\begin{itemize}
\item[(a)]	There is an injection $i: H^{2,2}(\R_\infty) \rarr Y_0$ that maps bounded subsets of $H^{2,2}(\R)$ to precompact subsets of $Y_0$
\item[(b)]	For each $\rho \in Y_0$, $(1+|\dotarg|) \rho(\dotarg) \in L^2(\R_\infty) \cap L^\infty(\R_\infty)$. 
\item[(c)]	Each $\rho \in Y_0$ is a continuous function with $\lim_{\lam \rarr \infty} \lam \rho(\lam) = 0$. This will allow uniform rational approximation of $(\dotarg)\rho(\dotarg)$ in $L^\infty$.
\end{itemize}

Consider the weighted Sobolev spaces 
$$ H^{\alpha,\beta}(\R) = \left\{ f \in L^2(\R): \, \langle \xi \rangle^{\alpha} \widehat f(\xi), \langle x \rangle^\beta f \in L^2(\R) \right\}.$$
and recall that for any $\eps>0$, $H^{1/2+\eps,0}(\R) \subset C_0(\R)$, where $C_0(\R)$ denotes the continuous functions vanishing at infinity.
Also, recall that the embedding $i: H^{\alpha,\beta}(\R) \hookrightarrow H^{\alpha',\beta'}(\R)$ is compact for $\alpha>\alpha'$ and $\beta>\beta'$. 
From the estimates
$$ 
\norm[L^2]{\langle \dotarg \rangle \rho(\dotarg)} \leq \norm[H^{0,1}(\R)]{\rho}, \quad
\norm[H^{1,0}(\R)]{\langle \dotarg \rangle \rho(\dotarg)}  \leq  \norm[H^{2,2}(\R)]{\rho}
$$
it follows by interpolation that for any $\eps>0$, 
$$
\norm[H^{1/2+\eps,0}(\R)]{\langle \dotarg \rangle \rho (\dotarg)} \leq \norm[H^{1+2\eps,3/2 +\eps}]{\rho}.
$$
 $Y_0= H^{1+2\eps,3/2+\eps}(\R_\infty)$,
 is the image of the fractional Sobolev space $H^{1+2\eps,3/2+\eps}(\R)$ 
under the restriction map $f \mapsto \left. f \right|_{\R_\infty}$. This space satisfies the required properties (a), (b), (c) above.

\begin{definition}
\label{def-space-Y}
We denote by $Y$ the Banach space of scattering data $SD$ with $\rho \in Y_0$ and all other data in $H^1$.
\end{definition}

\begin{remark}
\label{rem-H1-Y}
Note that, for $SD \in Y$, the entries of $J$ all belong to $L^2 \cap L^\infty$.
\end{remark}

Let $Z_0 = H^{2,2}(\R_\infty)$. By Proposition \ref{lip prop}, the range of the direct scattering map actually lies in the following stronger space:

\begin{definition}
\label{def-space-Z}
We denote by $Z$ the set of scattering data $SD$ with $\rho \in Z_0$ and all other data in $H^2$. 
\end{definition}

We choose to consider $SD$ in the larger space in order to obtain uniform resolvent estimates for scattering data in bounded subsets of $Z$ later by a continuity-compactness argument (see Appendix \ref{app-cc}).
We will exploit the fact that, under the natural continuous embedding of $Z$ in $Y$, bounded subsets of $Z$ are identified with precompact subsets of $Y$.  We will prove:

\begin{theorem}
\label{thm:lambda-unique}
Suppose that the scattering data $J(\lam)$ are given by \eqref{J}--\eqref{J-} with $SD \in Y$.  Then RHP \ref{RHP.lambda} has a unique solution for each $x_0 \in \R$. 
\end{theorem}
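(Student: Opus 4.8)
The plan is to reduce the unique solvability of RHP \ref{RHP.lambda} to a vanishing lemma for the associated singular integral equation, following the strategy of \cite{Liu17} but carried out on the augmented contour $\Gamma$. First I would recast the RHP as a Beals--Coifman integral equation: writing $\bfN_\pm = \bfN_- (I + w_\pm)$ with $w_\pm = J_x^{\pm 1/\cdots}$ the triangular factors of $J_x = (I-w_-)^{-1}(I+w_+)$ coming from the factorizations \eqref{factor-J1}, \eqref{factor-J3}, \eqref{J+}, \eqref{J-}, one obtains $\mu = (1,0) + C_w \mu$ where $C_w h = C_+(h w_-) + C_-(h w_+)$ and $C_\pm$ are the Cauchy projectors on $\Gamma$. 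Because $SD \in Y$ forces all entries of $J$ to lie in $L^2 \cap L^\infty$ (Remark \ref{rem-H1-Y}), the operator $C_w$ is bounded on $L^2(\Gamma)$, and it is a Fredholm operator of index zero on $L^2(\Gamma)$ by the standard theory (the contour $\Gamma$ is admissible, the factors decay, and $C_\pm$ are bounded with $C_+ - C_- = I$). Hence unique solvability of RHP \ref{RHP.lambda} is equivalent to triviality of the kernel of $I - C_w$, i.e.\ to the statement that the only null vector $\bfN$ in the sense of Definition \ref{def-null-lambda} is $\bfN \equiv 0$.

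Next I would transfer the vanishing problem back to the $\zeta$-variable, where Zhou's machinery applies directly. A null vector $\bfN(x,\cdot)$ for RHP \ref{RHP.lambda} corresponds, under the inverse of the symmetry reduction $\zeta \mapsto \lambda = \zeta^2$ described in Section \ref{sec:scattering-lambda} (unfolding the row vector $N$ back into the $2\times 2$ Beals--Coifman matrix via $m^\sharp$ and the diagonal conjugation $\mathrm{diag}(\zeta^{\pm 1/2})$), to a null vector for the matrix RHP on the augmented $\zeta$-contour $\Sigma$ whose jump matrix $v$ is given by \eqref{v}--\eqref{v-circle--}. Here the key structural input is Proposition \ref{schwarz invariance}: $v(\zeta) + v(\zeta)^\dagger$ is positive definite on $\R$ and $v(\overline\zeta) = v(\zeta)^\dagger$ on $\Sigma \setminus \R$. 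This is precisely the hypothesis of Zhou's vanishing lemma \cite[Theorem 9.3]{Zhou89-1} (also invoked in the introduction as ``Zhou's vanishing lemma''), which I would apply as follows. Let $\mu$ be the homogeneous solution of the Beals--Coifman equation on $\Sigma$ induced by the null vector; form the scalar function $h(z) = \int_{\Sigma} \mu_+(\zeta) \mu_-(\zeta)^\dagger \, |d\zeta|$-type pairing — more precisely, consider $H(z) = \mu(z) \mu^*(\overline z)$ which is analytic off $\Sigma$, decays like $z^{-2}$, and whose boundary values satisfy $H_+ - H_- = \mu_- (v + v^\dagger) \mu_-^\dagger$ on $\R$ (using the Schwarz-reflection symmetry on $\Sigma \setminus \R$ to cancel contributions there). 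Integrating and using analyticity forces $\int_\R \mu_-(v+v^\dagger)\mu_-^\dagger = 0$, and positive-definiteness on $\R$ then gives $\mu_- = 0$ on $\R$, whence $\mu \equiv 0$ everywhere by analytic continuation and the jump relations. Finally, the explicit algebraic formulas relating $\mu$ and $\nu$ (the $\lambda$-variable homogeneous solution) — the diagonal $\zeta^{\pm 1/2}$ conjugation and row extraction — show that $\nu \equiv 0$ as well, so $\bfN \equiv 0$.

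Combining the two steps: the kernel of $I - C_w$ is trivial, and since $I - C_w$ is Fredholm of index zero, it is invertible; therefore the Beals--Coifman equation $\mu = (1,0) + C_w\mu$ has a unique solution, and reconstructing $\bfN(x,z) = (1,0) + \frac{1}{2\pi i}\int_\Gamma \frac{\mu(\zeta)(w_+(\zeta)+w_-(\zeta))}{\zeta - z}\,d\zeta$ gives the unique solution of RHP \ref{RHP.lambda}. The dependence on $x_0$ enters only through the data $\rho_0, \alpha_0, \balpha_0, n_{12}^-(x_0,\cdot), n_{21}^-(x_0,\cdot)$, all of which remain in the admissible class $Y$ for every choice of $x_0$ satisfying \eqref{x0-condition}, so the argument is uniform in $x_0 \in \R$ (for $x_0$ large enough that the cutoff condition holds, and the statement is vacuous otherwise or the data simply changes).

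The main obstacle I anticipate is \emph{not} the abstract Fredholm/vanishing scheme, which is by now standard, but rather the careful bookkeeping at the self-intersection points $\pm S_\infty$ of $\Gamma$ (and the corresponding eight-fold intersection points on $\Sigma$): one must check that the homogeneous solution $\mu$ genuinely lies in the Sobolev spaces $H^k_\pm(\Gamma)$ adapted to self-intersecting contours (Appendix \ref{app:Sobolev}), so that the Cauchy integrals, the pairing $H(z) = \mu(z)\mu^*(\overline z)$, and the integration-by-parts identity are all legitimate across those points. The product/matching conditions of Theorem \ref{thm:scattering data}(iii) are exactly what guarantees that the piecewise factorization assembles into an honest $k$-regular jump matrix, so that no spurious singularities or distributional contributions appear at $\pm S_\infty$ when one runs the vanishing argument; verifying this compatibility — rather than any single analytic estimate — is where the real work lies, and it is the place where Zhou's treatment of spectral singularities genuinely enters.
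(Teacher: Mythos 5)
Your proposal follows essentially the same route as the paper: reduction to the Beals--Coifman equation on $\Gamma$, Fredholm theory (justified in the paper via uniform rational approximation of $(\cdot)\rho(\cdot)$ for $\rho\in Y_0$), transfer of any null vector to the matrix RHP on $\Sigma$, Zhou's vanishing lemma applied through the symmetries of Proposition \ref{schwarz invariance}, and then $\nu_{11}=\mu_{11}$, $\nu_{12}=\mu_{12}/\zeta$ to kill $\nu$. The only mild discrepancy is where you locate the technical work: in the paper it sits in Lemma \ref{lemma:nu.to.mu}, i.e.\ the explicit partial-fraction and parity computation converting the integrals over $\Gamma_\infty^\pm$ into integrals over $\Sigma_\infty$, rather than in the matching conditions at $\pm S_\infty$ that you flag.
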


Following the pattern of the uniqueness result in \cite{JLPS17a,Liu17}, we will prove the existence and uniqueness of solutions in the following way. First, we show that RHP \ref{RHP.lambda} is equivalent to a Fredholm integral equation (the Beals-Coifman integral equation, \eqref{SIE-nu}, for an unknown function $\nu(x,\dotarg)$ on $\Gamma$.  By the Fredholm alternative, it suffices to show that the corresponding homogeneous equation, \eqref{SIE-nu-homo}, has only the trivial solution. In order to do so, in Subsection \ref{Mapping}, we derive similar integral equations associated to an equivalent Riemann-Hilbert Problem, RHP \ref{RHP.zeta}, on the contour $\Sigma$. These integral equations involve an unknown function $\mu$; the inhomogeneous equation is \eqref{SIE-mu} and the homogeneous equation is \eqref{SIE-mu-homo}.  We can use Zhou's uniqueness theorem to show that RHP \ref{RHP.zeta} is uniquely solvable, or, equivalently, \eqref{SIE-mu-homo} has only the trivial solution. Finally, we show that any solution $\nu$ of the homogeneous equation  \eqref{SIE-nu} induces a solution of \eqref{SIE-mu-homo}, It then follows from explicit formulae connecting $\nu$ and $\mu$ that $\nu=0$, establishing the Fredholm alternative for the \emph{original} Beals-Coifman equation \eqref{SIE-nu}.

\subsection{RHPs and  singular integral equations}
We now derive  the Beals-Coifman integral equation for RHP \ref{RHP.lambda}.  The unique solvability of RHP \ref{RHP.lambda}  is equivalent to the unique solvability of 
its associated integral equation.
We  define the nilpotent matrices  $W_x^+$ and $W_x^-$  {in the various parts of the contour} as
$$J_x(\lambda)= (J_{x-})^{-1} J_{x+}=(I-W_x^-)^{-1}(I+ W_x^+) $$
and the Beals-Coifman solution
\begin{equation}
\label{nu-def.}
\nu	=	\bfN^+ (I+ W_x^+)^{-1} = \bfN^-(I-W_x^-)^{-1}
\end{equation}
where
$$ 
(W_x^+,W_x^-) = 
\begin{cases}
	\left( 
		\lowmat{ \lam \overline{ \rho(\lambda) } e^{2i\lam x}}, 
		\upmat{\rho(\lam)e^{-2i\lam x}}
	\right), & \lambda\in\R_\infty,\\
	\\
	\left(
		\upmat{-\rho_0(\lambda)  e^{-2i\lam x}}, 
		\lowmat{-\lambda\overline{\rho_0(\lambda)}  e^{2i\lam x}  }
	\right),
	& \lambda\in(-S_\infty, S_\infty),	\\
	\\
	\left(
		\lowmat{e^{2i\lam x} S_1(\lambda)},
		\lowmat{e^{2i\lam x} S_2(\lambda)}
	\right),
	& \lambda\in\Gamma_\infty^+,		\\
	\\
	\left(
		\upmat{e^{-2i\lam x} S_3(\lambda)},
		\upmat{e^{-2i\lam x} S_4(\lambda)}
	\right),
	& \lambda\in\Gamma_\infty^-.
\end{cases}
$$
The coefficients $S_i(\lambda)$, $i=1,\cdots, 4$,  are not explicitly determined. Only the sums $S_1(\lambda) + S_2(\lambda)$ and $S_3(\lambda) + S_4(\lambda)$  identify to the entries $(2,1)$ and $(1,2)$ of the jump matrix $J_x(\lambda)$ respectively, in the corresponding part of the contour.
If $SD \in Y$, then
$W_x^\pm$ in $L^\infty \cap L^2$, while if $SD \in Z$, $W_x^\pm \in H^1$. 

We can write  the Beals-Coifman solution $\nu(x,\lambda)$ explicitly in terms of the Jost functions.  \bluetext{ From \eqref{nu-def.}, we have}  two equivalent formulas.
\begin{align*}
\nu(x,\lambda)
	&=	\begin{cases}
				\left( \dfrac{n_{11}^-(x,\lambda)}{\ba(\lambda)} \quad n_{12}^+(x,\lambda) \right)		
				\ttwomat{1}{0}{- e^{2i\lambda x}\lam \overline{\rho(\lam)}}{1} \\
				\\
				\left(n_{11}^+(x,\lambda) \quad \dfrac{n_{12}^-(x,\lambda)}{\alpha(\lambda)} \right) 		
				\ttwomat{1}{\rho(\lam) e^{-2i\lam x}}{0}{1}
			\end{cases}
			&\lambda \in \R_\infty\\
			\\
\nu(x,\lambda)
	&=	\begin{cases}
 				\left( \dfrac{n_{11}^-(x,\lambda)}{\ba(\lambda)} \quad n_{12}^+(x,\lambda) \right)	
 				\ttwomat{1}{0}{- e^{2i\lambda x} S_1(\lambda)}{1}	 \\
 				\\
				\left(   N^{(2 )}_{11+}(x,\lambda)   \quad     N^{(2 )}_{12+}(x,\lambda) \right) 
				 \ttwomat{1}{0}{e^{2i\lam x} S_2(\lambda)}{1} 
			\end{cases}
			&\lambda \in \Gamma_\infty^+\\
			\\
\nu(x,\lambda)
	&=	\begin{cases}	
				\left(   N^{(2 )}_{11+}(x,\lambda)   \quad     N^{(2 )}_{12+}(x,\lambda) \right)
				\ttwomat{1}{\rho_0(\lambda)  e^{-2i\lam x}}{0}{1}\\
				\\
 				\left(   N^{(2 )}_{11-}(x,\lambda)   \quad     N^{(2 )}_{12-}(x,\lambda) \right) 
 				\ttwomat{1}{0}{-\lambda\overline{\rho_0(\lambda)}  e^{2i\lam x}   }{1} 
 		\end{cases}
		&\lambda \in (-S_\infty, S_\infty)\\
		\\
\nu(x,\lambda)
	&=	\begin{cases}
 				\left(   N^{(2 )}_{11-}(x,\lambda)   \quad     N^{(2 )}_{12-}(x,\lambda) \right) 	
 				 \ttwomat{1}{-e^{-2i\lam x} S_3(\lambda)}{0 }   { 1}	 \\
 			\\
			\left(n_{11}^+(x,\lambda) \quad \dfrac{n_{12}^-(x,\lambda)}{\alpha(\lambda)} \right)  
			\ttwomat{ 1}{e^{-2i\lam x} S_4(\lambda) } {0 } {1 }
			\end{cases}
			&	\lambda \in \Gamma_\infty^-
\end{align*}
From (\ref{nu-def.}),  we have
$$\bfN^+ - \bfN^-=\nu\left(W_x^+ + W_x^- \right).$$
The Plemelj formula and the normalization condition (ii) in RHP \ref{RHP.lambda} provide the  Beals-Coifman integral equation:
\begin{align}
\label{SIE-nu}
\nu(x,\lam)	
	&=		\left( 1, 0 \right)+\left(\calC_{W_x}\nu\right)(\lam)
		\end{align}
where 
$$	\calC_{W_x}\nu =	C^+_{\Gamma}(\nu W^-_x)+
		C^-_{\Gamma}(\nu W^+_x).$$
RHP \ref{RHP.lambda} is equivalent to the  integral equation  \eqref{SIE-nu}  \cite[Proposition 3.3]{Zhou89-1}.
Similarly, if $\bfN$ is a null vector for RHP \ref{RHP.lambda} in the sense of Definition \ref{def-null-lambda}
and $\nu$ is defined in \eqref{nu-def.},  we have
\begin{equation}
\label{SIE-nu-homo}
\nu(x,\lam) = 
	\calC_{W_x}\nu(\lam).
\end{equation}
\bluetext{
If  $SD \in Y$, eq.  \eqref{SIE-nu} (resp.\ \eqref{SIE-nu-homo}) is seen as  an  integral equation for $\nu -1\in L^2(\Gamma)$ (resp.\ $\nu \in L^2(\Gamma)$), while if $SD \in Z$, 
it is an integral equation  for $\nu -1 \in  H^1(\Gamma)$ (resp.\ $\nu \in  H^1(\Gamma)$).
}

For $\lambda\in \R_\infty$, \eqref{SIE-nu} reads:
\begin{align*}
\nu_{11}(x,\lam) 	
	&=	1 +\int_{\R_\infty}
			\frac{\nu_{12}(x,s) s \overline{\rho(s)} e^{2is x}}{s-\lambda+i0} \frac{ds}{2\pi i} 
			-   \int_{-S_\infty}^{S_\infty} \frac{\nu_{12}(x,s) s \overline{\rho_0(s)} e^{2is x}}{s-\lam} \frac{ds}{2\pi i} \\
\nonumber
	&	\qquad + \int_{\Gamma_\infty^+}  \frac{\nu_{12}(x,s) (S_1(s)+S_2(s))e^{2is x}}{s-\lam} \frac{ds}{2\pi i}    
\end{align*}
\begin{align*}
\nu_{12}(x,\lam) 	
	&=	 \int_{\R_\infty} 
					\frac{\nu_{11}(x,s) \rho(s) e^{-2is x}}{s-\lam-i0} \frac{ds}{2\pi i} 
			-  \int_{-S_\infty}^{S_\infty} \frac{  \nu_{11}(x,s)\rho_0(s) e^{-2is x}  }{ s-\lambda }\dfrac{ds}{2\pi i} \\
\nonumber
	& \qquad
			+ \int_{\Gamma_\infty^-} \frac{\nu_{11}(x,s) (S_3(\lambda)+S_4(\lambda)) e^{-2is x}}{s-\lam} \frac{ds}{2\pi i}  .
\end{align*}
The integral equations for $\lambda\in (-S_\infty, S_\infty)$ and $\lambda\in \Gamma_\infty^\pm$  are obtained analogously. 
The solution to Problem \ref{RHP.lambda}
is given,  in terms of $\nu = (\nu_{11}, \nu_{12})$ by
\begin{align}
\label{N.recon}
\bfN(x,z) 	&= 	(1,0)+ \frac{1}{2\pi i}\int_{\Gamma} \frac{\nu(x,s)(W_x^+(s) + W_x^-(s) )}{s-z} \, ds.	
\end{align}
The goal is an existence and uniqueness  result for  solution to Problem \ref{RHP.lambda}.  To make use of the symmetry relations of the jump conditions  and Zhou's vanishing lemma, we need to consider the  equivalent RHP  in the $\zeta$ variable with jump contour $\R \cup i\R\cup \Sigma_\infty$ given by Figure \ref{aug-1}.

\begin{RHP}
\label{RHP.zeta}
Fix $x \in \bbR$. Find a matrix-valued  function 
 $\bfM(x,\dotarg)$ with the following properties:
\begin{enumerate}
\item[(i)](Analyticity) $\bfM(x, z)$ is a  $2\times 2$ matrix-valued  analytic function of $z$ for $z\in \C \setminus \Sigma$  where the contour $\Sigma$ is given by Figure \ref{aug-1}.

\item[(ii)] (Normalization) 
	$${\bfM}(x,z)=\twomat{1}{0}{0}{1}+\mathcal{O}(z^{-1}) \text{ as } z \rightarrow\infty.$$

\item[(iii)] (Jump condition) For each $\zeta\in\Sigma$, $\bfM$ has continuous 
boundary values
$\bfM_{\pm}(\lambda)$ as $z \to \zeta$ from $\Omega_\pm$. 
Moreover, the jump relation 
$$\bfM_+(x,\zeta)=\bfM_-(x,\zeta)e^{-ix\zeta^2\ad\sigma} v(\zeta)$$ 
holds, where $v(\zeta)$ is given by \eqref{v}-\eqref{v-circle--}.
\end{enumerate}
\end{RHP}

\begin{definition}
\label{def-null-zeta}
We say that a matrix-valued function $\bfM(x,z)$ is a \emph{null vector} for RHP \ref{RHP.zeta} if $\bfM(x,z)$ satisfies (i) and (iii) above and $\bfM(x,z) = \bigO{z^{-1}}$ as $|z| \rarr \infty$. 
\end{definition}

Observe that, given scattering data $SD$ for RHP \ref{RHP.lambda} in the space $Y$ from Definition \ref{def-space-Y}, the induced scattering data for RHP \ref{RHP.zeta} consist of bounded continuous functions, square-integrable on the unbounded contours. Thus RHP \ref{RHP.zeta} is well-defined with the $\bigO{z^{-1}}$ condition replaced by an $L^2$-condition on $\bfM_\pm - I$ 
(and the condition $\bfM_\pm \in L^2$ for an $L^2$-null vector).

\begin{proposition}
\label{vanishing-zeta}
The only $L^2$ null vector for RHP \ref{RHP.zeta} with scattering data induced from $SD \in Y$is the zero vector.
\end{proposition}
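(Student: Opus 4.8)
The plan is to reduce Proposition~\ref{vanishing-zeta} to Zhou's vanishing lemma \cite[Theorem 9.3]{Zhou89-1}, for which the only inputs, apart from the oscillatory factor, are the two symmetries recorded in Proposition~\ref{schwarz invariance}. Write $\widetilde v(\zeta)=e^{-ix\zeta^2\ad\sigma}v(\zeta)$ for the actual jump matrix in RHP~\ref{RHP.zeta}. The first step is to verify that $\widetilde v$ still has both symmetries: on $\R$ and on $i\R$ one has $\zeta^2\in\R$, so $e^{-ix\zeta^2\ad\sigma}$ acts by conjugation with a unitary matrix, and hence $\widetilde v(\zeta)+\widetilde v(\zeta)^\dagger$ is positive definite for $\zeta\in\R$ by Proposition~\ref{schwarz invariance}(i); on $\Sigma\setminus\R$ one uses $(\ad\sigma\,A)^\dagger=-\ad\sigma(A^\dagger)$ to get $(e^{-ix\zeta^2\ad\sigma})^\dagger=e^{-ix\overline{\zeta^2}\ad\sigma}$, whence $\widetilde v(\overline\zeta)=e^{-ix\overline{\zeta^2}\ad\sigma}v(\overline\zeta)=\big(e^{-ix\zeta^2\ad\sigma}v(\zeta)\big)^\dagger=\widetilde v(\zeta)^\dagger$ by Proposition~\ref{schwarz invariance}(ii). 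Since $SD\in Y$, the discussion preceding the proposition gives $\widetilde v-I\in L^2\cap L^\infty(\Sigma)$, so an $L^2$-null vector $\bfM$, arising from a solution of the homogeneous Beals--Coifman equation as a Cauchy transform over $\Sigma$ of an $L^1$ density, satisfies $\bfM(x,z)=O(|z|^{-1})$ uniformly as $z\to\infty$ off $\Sigma$.

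The core of the argument is the Schwarz reflection trick. Fix $x$ and set $H(z)=\bfM(x,z)\,\bfM(x,\overline z)^\dagger$ for $z\in\C\setminus\Sigma$; since $\overline\Sigma=\Sigma$ and $w\mapsto\overline{\bfM(x,\overline w)}$ is analytic where $\overline w\notin\Sigma$, the function $H$ is analytic on $\C\setminus\Sigma$ and $H(z)=O(|z|^{-2})$ at infinity. Using the reflection symmetry of $\widetilde v$ together with $\overline{\widetilde\Omega_+}=\widetilde\Omega_-$, one checks that the one-sided boundary values of $H$ agree across every smooth arc of $\Sigma\setminus\R$: at $\zeta_0\in\Sigma\setminus\R$, with ``$+$'' boundary value $\bfM_+$ and ``$-$'' boundary value $\bfM_-$, one finds $H_+(\zeta_0)=\bfM_-(x,\zeta_0)\widetilde v(\zeta_0)\bfM_-(x,\overline{\zeta_0})^\dagger$ and $H_-(\zeta_0)=\bfM_-(x,\zeta_0)\widetilde v(\overline{\zeta_0})^\dagger\bfM_-(x,\overline{\zeta_0})^\dagger$, and these coincide because $\widetilde v(\overline{\zeta_0})=\widetilde v(\zeta_0)^\dagger$. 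The self-intersection points of $\Sigma$ off $\R$ are isolated, and $\bfM$ (hence $H$) is bounded there by the $k$-regularity of Theorem~\ref{thm:scattering data}, so they are removable singularities. Thus $H$ extends analytically to $\C\setminus\R$.

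It remains to use positivity on $\R$. Keeping track of which of $\widetilde\Omega_\pm$ lies above and below each arc of $\R$, the same computation gives, for $\lambda\in\R$, $H_+(\lambda)+H_-(\lambda)=\bfM_-(x,\lambda)\big(\widetilde v(\lambda)+\widetilde v(\lambda)^\dagger\big)\bfM_-(x,\lambda)^\dagger$, a pointwise positive semidefinite matrix. Closing the contour in the upper, respectively lower, half-plane (legitimate because $H$ is analytic off $\R$, has $L^1$ boundary values there, and decays like $|z|^{-2}$, so the large semicircular arcs contribute nothing in the limit) gives $\int_\R H_+\,d\lambda=\int_\R H_-\,d\lambda=0$, hence $\int_\R\big(H_+(\lambda)+H_-(\lambda)\big)\,d\lambda=0$. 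A pointwise positive semidefinite integrand with vanishing integral is zero a.e.\ (test against fixed vectors), and since $\widetilde v+\widetilde v^\dagger$ is pointwise positive definite (factor it as $B^\dagger B$ with $B$ invertible) this forces $\bfM_-(x,\cdot)=0$ a.e.\ on $\R$, whence also $\bfM_+=\bfM_-\widetilde v=0$ on $\R$. Finally, $\bfM$ restricted to $\widetilde\Omega_1$ is analytic, continuous up to the boundary, and vanishes on the arc of $\R$ contained in $\partial\widetilde\Omega_1$; Schwarz reflection continues it across that arc, so it vanishes identically on $\widetilde\Omega_1$, and the jump relations then propagate the vanishing through all eight regions $\widetilde\Omega_1,\dots,\widetilde\Omega_8$, giving $\bfM\equiv0$.

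I expect the main obstacle to be bookkeeping rather than any single hard estimate: one must track the ``$\pm$'' labels and the orientations through the eight regions and the self-intersections of the augmented contour $\Sigma$ carefully enough both to obtain the cancellation of the jumps of $H$ across $\Sigma\setminus\R$ and to get the correct positive semidefinite form of $H_+ + H_-$ across $\R$; and because the only hypothesis available is $L^2$ boundary data, the decay of $\bfM$ and of $H$ that makes the contour-closing step rigorous must be extracted from the Cauchy-transform representation of the null vector rather than assumed. Everything else is Zhou's vanishing lemma in the form of \cite[Theorem 9.3]{Zhou89-1}.
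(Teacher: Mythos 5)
Your argument is correct and takes essentially the same route as the paper: the paper's entire proof is the one-line observation that the statement follows from Proposition \ref{schwarz invariance} together with Zhou's vanishing lemma \cite[Theorem 9.3]{Zhou89-1}. What you have done in addition is verify that the symmetries survive conjugation by $e^{-ix\zeta^2\ad\sigma}$ and then unpack the proof of Zhou's lemma itself (the reflection function $\bfM(x,z)\bfM(x,\overline{z})^\dagger$, jump cancellation off $\R$, positivity and contour closing on $\R$), which the paper treats as a black box.
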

\begin{proof}
The proof is a  direct consequence of Proposition \ref{schwarz invariance} and \cite[Theorem 9.3]{Zhou89-1}. 
\end{proof}

It is useful to formulate  Proposition \ref{vanishing-zeta} in terms of the homogeneous Beals-Coifman equation \bluetext{ associated to}  RHP \ref{RHP.zeta}, which we now derive.

The jump matrix  $v_x(\zeta)$ admits the following factorization
$$v_x(\zeta)=(1-w_x^-)^{-1}(1+w_x^+) .$$
We set 
\begin{equation}
\label{mu-def.}
\mu	=	\bfM^+ (1+ w_x^+)^{-1} = \bfM^-(1-w_x^-)^{-1}.
\end{equation}
In analogy with  RHP \ref{RHP.lambda}, the  Beals-Coifman integral equation for RHP \ref{RHP.zeta} is
\begin{equation}
\label{SIE-mu}
\mu=I+\calC_{w_x}\mu=I+ C^+_{\Sigma}(\mu w^-_x)+C^-_{\Sigma}(\mu w^+_x)
\end{equation}
where $I$ is the $2\times 2$ identity matrix.
If $\bfM$ is a null vector in the sense of Definition \ref{def-null-zeta} and $\mu$ is defined by \eqref{mu-def.},
then
\begin{equation}
\label{SIE-mu-homo}
\mu=\calC_{w_x}\mu.    
\end{equation}
We can now reformulate Proposition \ref{vanishing-zeta} as follows:

\begin{proposition}
\label{vanishing-zeta-mu}
Assume that $w^\pm$ are obtained from scattering data $SD$ in $Y$. Then, the only solution to \eqref{SIE-mu-homo} in $L^2(\Sigma)$ is the zero vector. 
\end{proposition}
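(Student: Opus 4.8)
The plan is to reduce the statement to Proposition~\ref{vanishing-zeta} by producing, from any $L^2(\Sigma)$ solution of the homogeneous equation \eqref{SIE-mu-homo}, an $L^2$ null vector for RHP~\ref{RHP.zeta} in the sense of Definition~\ref{def-null-zeta}; the reverse passage (from a null vector to a solution of \eqref{SIE-mu-homo}) has already been recorded above. First I would note that, since $SD\in Y$, the factors $w_x^\pm$ belong to $L^\infty(\Sigma)$, so that the operator $\calC_{w_x}\colon \phi\mapsto C_\Sigma^+(\phi w_x^-)+C_\Sigma^-(\phi w_x^+)$ is bounded on $L^2(\Sigma)$ and \eqref{SIE-mu-homo} is meaningful there; moreover $w_x^\pm\in L^2$ on the two unbounded components $\R$ and $i\R$ of $\Sigma$, the remaining piece $\Sigma_\infty$ being compact.

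Given $\mu\in L^2(\Sigma)$ with $\mu=\calC_{w_x}\mu$, I would set $h=\mu(w_x^++w_x^-)$, note that $h\in L^1(\Sigma)\cap L^2(\Sigma)$ by the preceding observation, and define
$$\bfM(x,z)=\frac{1}{2\pi i}\int_\Sigma\frac{h(s)}{s-z}\,ds,\qquad z\in\C\setminus\Sigma.$$
Then $\bfM(x,\dotarg)$ is analytic off $\Sigma$ and satisfies $\bfM(x,z)=\bigO{z^{-1}}$ as $z\to\infty$ because $h\in L^1(\Sigma)$. Since $\Sigma$ is admissible, the Cauchy projectors $C_\Sigma^\pm$ are bounded on $L^2(\Sigma)$ and obey the Plemelj relation $C_\Sigma^+-C_\Sigma^-=\mathrm{Id}$, so $\bfM$ has boundary values $\bfM_\pm=C_\Sigma^\pm h\in L^2(\Sigma)$. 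Applying $C_\Sigma^+-C_\Sigma^-=\mathrm{Id}$ to $\mu w_x^+$ and using the homogeneous equation $\mu=C_\Sigma^+(\mu w_x^-)+C_\Sigma^-(\mu w_x^+)$, one gets
$$\bfM_+=C_\Sigma^+(\mu w_x^-)+C_\Sigma^-(\mu w_x^+)+\mu w_x^+=\mu(1+w_x^+),$$
and symmetrically $\bfM_-=\mu(1-w_x^-)$; hence $\bfM_+=\bfM_-(1-w_x^-)^{-1}(1+w_x^+)=\bfM_-\,e^{-ix\zeta^2\ad\sigma}v(\zeta)$. Therefore $\bfM$ is an $L^2$ null vector for RHP~\ref{RHP.zeta} with scattering data induced from $SD\in Y$, and Proposition~\ref{vanishing-zeta} forces $\bfM\equiv0$. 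Then $\bfM_+\equiv0$, so $\mu=\bfM_+(1+w_x^+)^{-1}=0$, which is the assertion.

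Once Proposition~\ref{vanishing-zeta} is available this is essentially bookkeeping, and I do not expect a genuine obstacle. The points that need a little care are the familiar ones for this circle of ideas: the $L^2$-boundedness of the Cauchy projectors and the validity of the Plemelj jump relation on the self-intersecting contour $\Sigma$ (which follow from admissibility of $\Sigma$ together with the boundedness and continuity of the jump data guaranteed by $SD\in Y$), and the membership $h\in L^1(\Sigma)$, which is precisely what secures the decay of $\bfM$ at infinity needed to apply Proposition~\ref{vanishing-zeta}.
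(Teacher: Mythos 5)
Your argument is correct and is essentially the paper's: Proposition \ref{vanishing-zeta-mu} is presented there as a direct reformulation of Proposition \ref{vanishing-zeta} via the standard equivalence between the RHP and its Beals--Coifman singular integral equation, and your Cauchy-integral construction of $\bfM$ from $\mu$ simply spells out the converse direction of that equivalence which the paper leaves implicit. The only cosmetic point is that the decay of $\bfM$ should be phrased as the $L^2$ condition $\bfM_\pm\in L^2(\Sigma)$ (as the paper itself notes for $L^2$ null vectors) rather than a pointwise $\bigO{z^{-1}}$ bound, which is exactly what your observation $h\in L^1\cap L^2$ delivers.
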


\subsection{A Mapping Between Null Spaces}
\label{Mapping}
To complete the proof of Theorem \ref{thm:lambda-unique}, we show that any solution $\nu$ of \eqref{SIE-nu-homo}
induces a solution $\mu$ of \eqref{SIE-mu-homo} and that if $\mu=0$, then $\nu=0$. For notational brevity, we suppress
the dependence of $\mu$ and $\nu$ on $x$, which remains fixed throughout the discussion.

\begin{lemma}
\label{lemma:nu.to.mu}
Suppose that $W_x^\pm$ are generated from scattering data $SD \in Y$.  
For $\nu=(\nu_{1}, \nu_{2})$ a solution of the homogeneous Beals-Coifman equation (\ref{SIE-nu-homo}) {in $L^2(\Gamma)$}, define
\begin{equation}
\label{nu.to.mu-con}
\mu(x,\zeta)
		=\ttwomat	{\mu_{11}(x,\zeta)}
						{\mu_{12}(x,\zeta)}
						{\mu_{21}(x,\zeta)}
						{\mu_{22}(x,\zeta)} 
		=\ttwomat	{\nu_{11}(x,\zeta^2)}
						{\zeta \nu_{12}(x,\zeta^2)}
						{-\zeta \overline{\nu_{12}(x,\zetabar^2)}}
						{\overline{\nu_{11}(x,\zetabar^2)}}.
\end{equation}
Then $\mu \in L^2(\Sigma)$ solves \eqref{SIE-mu-homo}. 
\end{lemma}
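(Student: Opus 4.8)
The plan is to route the claim through the two Riemann--Hilbert problems. Recall that $\nu$ solves \eqref{SIE-nu-homo} if and only if the function $\bfN$ built from $\nu$ through $\bfN_\pm=\nu(I\pm W_x^\pm)$ (equivalently, $\bfN$ extended off $\Gamma$ as in \eqref{N.recon} with the $(1,0)$ term dropped) is a null vector of RHP \ref{RHP.lambda}; and similarly $\mu$ solves \eqref{SIE-mu-homo} if and only if the corresponding $\bfM$, defined from $\mu$ via \eqref{mu-def.}, is a null vector of RHP \ref{RHP.zeta}. Hence it suffices to show that the ``sharp'' lift
\[
 \bfM(x,\zeta)=\ttwomat{\bfN_1(x,\zeta^2)}{\zeta\,\bfN_2(x,\zeta^2)}{-\zeta\,\overline{\bfN_2(x,\zetabar^2)}}{\overline{\bfN_1(x,\zetabar^2)}}
\]
of a null vector $\bfN$ of RHP \ref{RHP.lambda} is a null vector of RHP \ref{RHP.zeta} whose Beals--Coifman density, computed from \eqref{mu-def.}, is exactly \eqref{nu.to.mu-con}.

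Verifying RHP \ref{RHP.zeta} for $\bfM$ proceeds property by property. Analyticity of $\bfM$ on $\C\setminus\Sigma$ holds because $\zeta\mapsto\zeta^2$ is holomorphic and maps $\C\setminus\Sigma$ onto $\C\setminus\Gamma$, carrying the augmented regions $\widetilde\Omega_\pm$ onto $\Omega_\pm$, while $\zeta\mapsto\overline{\bfN_j(x,\zetabar^2)}$ is holomorphic by Schwarz reflection; the normalization $\bfM=\bigO{\zeta^{-1}}$ is immediate from $\bfN=\bigO{\lambda^{-1}}$. For the jump condition one uses the algebraic relation, implicit in the construction in Proposition \ref{matrices-entry}, that the $\lambda$--jump matrix is the diagonal conjugate of the $\zeta$--jump matrix,
\[
 J_x(\lambda)=\diagmat{1}{\zeta}\,v_x(\zeta)\,\diagmat{1}{\zeta}^{-1},\qquad \lambda=\zeta^2
\]
(the half--integer powers of $\zeta$ appearing in the passage from $M$ to $N$ cancel in this conjugation). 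The first row of $\bfM_+=\bfM_-v_x$ then follows directly from $\bfN_+=\bfN_-J_x$. The second row is the delicate point: because it is built by Schwarz reflection, which interchanges the half--planes, the boundary values in the second row of $\bfM_+$ involve the \emph{opposite} boundary values of $\bfN$, and the required identity reduces to the statement that $v_x$ is a $\twomat{0}{1}{-\zeta}{0}$--conjugate of $\overline{J_x^{-1}}$. That statement is exactly the symmetry $v(\zetabar)=v(\zeta)^\dagger$ of Proposition \ref{schwarz invariance}(ii) on $\Sigma\setminus\R$, together with the scattering--data symmetries \eqref{a.sym}--\eqref{b.sym} on $\R$ (which yield $\br=-\overline r$ on $\R$ and $\br=\overline r$ on $i\R$); throughout, the $\widetilde\Omega_\pm$--sides of $\Sigma$ must be matched to the $\Omega_\pm$--sides of $\Gamma$, with due attention to the fact that the inner segment of $\Gamma$ runs from right to left, and on the arcs $\Gamma_\infty^\pm$ only the genuine jump entries, the sums $S_1+S_2$ and $S_3+S_4$, occur.

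It remains to identify the density. From $\mu=\bfM_-(1-w_x^-)^{-1}$, using $\diagmat{1}{\zeta}\,w_x^\pm=W_x^\pm\,\diagmat{1}{\zeta}$ (the same conjugation as above) and $\bfN_-=\nu(I-W_x^-)$, the first row of $\mu$ equals $\bfN_-(x,\zeta^2)\,(I-W_x^-)^{-1}(x,\zeta^2)\,\diagmat{1}{\zeta}=\nu(x,\zeta^2)\,\diagmat{1}{\zeta}$, which is the first row of \eqref{nu.to.mu-con}; the second row comes out the same way from the Schwarz formula, so $\mu$ is exactly \eqref{nu.to.mu-con} and therefore solves \eqref{SIE-mu-homo}. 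Finally $\mu\in L^2(\Sigma)$: on the parts of $\Sigma$ lying over the unbounded part of $\Gamma$ one has $|\zeta|^2=|\lambda|>S_\infty$ bounded below, so $L^2$--integrability of every entry except for the explicit factor $\zeta$ in $\mu_{12},\mu_{21}$ follows from $\nu\in L^2(\Gamma)$ under $|\zeta|^2\,d\zeta\sim\lambda^{1/2}\,d\lambda$, and the missing $\lambda^{1/2}$ weight is supplied because $SD\in Y$ (Definition \ref{def-space-Y}) places $\rho$ in $Y_0$, whence \eqref{SIE-nu-homo} and the weighted $L^2$ boundedness of the Cauchy projectors on $\R$ give $\langle\lambda\rangle^{1/4}\nu_{12}\in L^2$, which suffices.

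The step I expect to require the most care is the second--row jump condition for $\bfM$: reconciling the Schwarz reflection, which interchanges the half--planes, with Proposition \ref{schwarz invariance} and the scattering--data symmetries, while keeping the orientations of the pieces of $\Sigma$ and $\Gamma$ correctly aligned. A closely related nuisance is confirming that the individual coefficients $S_i$ on the arcs never enter the computation.
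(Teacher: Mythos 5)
Your proposal is correct in outline but takes a genuinely different route from the paper. The paper works entirely at the level of the singular integral equations: it cites \cite[Chapter 5]{Liu17} for the passage from \eqref{SIE-nu-homo} to \eqref{SIE-mu-homo} on the already-treated contours $\R$ and $\R\cup i\R$, and then does the only new work by hand, namely an explicit change of variables $\lambda=\zeta^2$ in the two arc integrals $I^{\pm}$ of \eqref{Sigma+}--\eqref{Sigma-}, using a partial-fraction splitting of $(\zeta^{2}-\zeta_0^{2})^{-1}$ and the parity of $\mu_{11},\mu_{12},m_{12}^-,m_{21}^-$ to recombine the two preimage arcs of $\Gamma_\infty^{\pm}$ into the four arcs of $\Sigma_\infty$. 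You instead lift the null vector $\bfN$ of RHP \ref{RHP.lambda} to a candidate null vector $\bfM$ of RHP \ref{RHP.zeta} via the ``sharp'' map plus Schwarz reflection, verify the jump using the diagonal conjugation $J_x=\mathrm{diag}(1,\zeta)\,v_x\,\mathrm{diag}(1,\zeta)^{-1}$ and the symmetry $v(\zetabar)=v(\zeta)^{\dagger}$ of Proposition \ref{schwarz invariance}, and then read off $\mu$ from \eqref{mu-def.}. Your route is more structural and makes transparent \emph{why} the second row of \eqref{nu.to.mu-con} has the form it does (it is forced by the Schwarz symmetry of $v$); the paper's route is more computational but avoids having to re-derive the equivalence ``null vector of the RHP $\Leftrightarrow$ $L^2$ solution of the homogeneous Beals--Coifman equation'' in the $\zeta$-picture, which is exactly where your argument quietly leans on the fact that the lifted $\bfM$ is again representable as a Cauchy integral over $\Sigma$ of an $L^2$ density. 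That representability is precisely what the paper's explicit substitution in $I^{\pm}$ establishes, so you should not treat it as free.

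Two smaller points to shore up. First, your $L^2(\Sigma)$ argument for $\mu_{12}$ requires $\langle\lambda\rangle^{1/4}\nu_{12}\in L^2(\R_\infty)$, which you justify by weighted boundedness of the Cauchy projectors; the paper's cleaner device is to first show only that $\mu\,w_x^{\pm}\in L^2(\Sigma)$ (the weight being absorbed by $(1+|\cdot|)\rho\in L^\infty$ from property (b) of $Y_0$, via \cite[Lemma 5.2.2]{Liu17}) and then deduce $\mu\in L^2(\Sigma)$ from the equation \eqref{SIE-mu-homo} itself and unweighted $L^2$-boundedness of $C^{\pm}_\Sigma$. Second, the correct constant reflection matrix is $\left(\begin{smallmatrix}0&1\\-1&0\end{smallmatrix}\right)$ acting on $\bfM$ in the $\zeta$-variable (the factors $\pm\zeta$ in \eqref{nu.to.mu-con} come from the sharp conjugation, not from the reflection), so the $\zeta$-dependent matrix $\left(\begin{smallmatrix}0&1\\-\zeta&0\end{smallmatrix}\right)$ you conjugate by is really the composition of the two; keeping them separate makes the verification of the second-row jump, and the cancellation of the individual $S_i$ in favour of the sums $S_1+S_2$ and $S_3+S_4$, considerably less error-prone.
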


\begin{remark}
\label{rem:mu-to-nu}
We can invert \eqref{nu.to.mu-con} to recover $\nu$ via the formulas
\begin{equation}
\label{mu.to.nu-con}
\nu_{11}(x,\zeta^2) = \mu_{11}(x,\zeta), \quad \nu_{12}(x,\zeta^2) = \mu_{12}(x,\zeta)/\zeta. 
\end{equation}
In particular, if $\mu = 0$, then $\nu=0$.
\end{remark}                  
\begin{proof}
\emph{Define} a matrix-valued function $\mu$ by \eqref{mu.to.nu-con} for a given solution $\nu$ of eq. \eqref{SIE-nu-homo}.
It is easy to see that
$$\mu_{11}(x,-\zeta)=\mu_{11}(x,\zeta) , \quad  \mu_{12}(x,-\zeta) =-\mu_{12}(x,\zeta).  $$
In \cite[Lemma 5.2.2]{Liu17} we have shown that for $\nu\in L^2(\Gamma)$ and $\rho\in Y_0$,  
$$\mu_{11}(x,\zeta)r(\zeta)=\nu_{11}(x,\zeta^2)\zeta\rho(\zeta^2), \,\, \mu_{12}(x,\zeta)\br(\zeta) =\zeta \nu_{12}(x,\zeta^2)\zeta\overline{ \rho(\zeta^2)} $$ 
are both square-integrable on the part of the $\Sigma$ contour outside the circle $\Sigma_\infty$. Thus $\mu w_x^\pm$ is an $L^2$ function on $\Sigma$. 
Once \bluetext{ \eqref{SIE-mu-homo} is obtained from  \eqref{SIE-nu-homo}},
 $\mu\in L^2(\Sigma)$ follows from the boundedness of Cauchy projection on $L^2$-functions.

In \cite[Chapter 5]{Liu17}, the second author established the transition  from \eqref{SIE-nu-homo} to  \eqref{SIE-mu-homo} when $\Gamma=\bbR$ and $\Sigma=\bbR\cup i\bbR$.  Thus, we  only consider   the contour integrals:
\begin{equation}
\label{Sigma+}
 I^+ := \int_{\Gamma_\infty^+}  \frac{\nu_{12}(x,s) (S_1(s)+S_2(s))e^{2is x}}{s-\lam} \frac{ds}{2\pi i}    
 \qquad
\end{equation}
and 
\begin{equation}
\label{Sigma-}
I^-:= \int_{\Gamma_\infty^-} \frac{\nu_{11}(x,s) (S_3(\lambda)+S_4(\lambda)) e^{-2is x}}{s-\lam} \frac{ds}{2\pi i}  
\end{equation}
Let $\lambda=\zeta^2$ and fix the branch $[0, 2\pi)$. 
\begin{align*}
I^+ &=\int_{\Gamma_\infty^+}  \frac{\nu_{12}(x,\lambda) n^{-}_{21} (x_0, \lambda) e^{2i\lambda( x- x_0)}}{(\lambda-\lam_0)\balpha(\lambda)\balpha_0(\lambda)}  \frac{d\lambda}{2\pi i}  
 \\
&
=\int_C \dfrac{\zeta^{-1} \mu_{12}(x,\zeta) \zeta m^{-}_{21} (x_0,\zeta) e^{2i\zeta^2( x- x_0)}}{(\zeta^2-\zeta^2_0)\ba(\zeta)\ba_0(\zeta)}  \dfrac{d\zeta^2}{2\pi i} \\
&=\int_C \left(  \dfrac{ \mu_{12}(x,\zeta) m^{-}_{21} (x_0,\zeta) e^{2i\zeta^2( x- x_0)}}{(\zeta-\zeta_0)\ba(\zeta)\ba_0(\zeta)} 
-\dfrac{ \mu_{12}(x,\zeta) m^{-}_{21} (x_0,\zeta) e^{2i\zeta^2( x- x_0)}}{(-\zeta-\zeta_0)\ba(\zeta)\ba_0(\zeta)}    \right)\dfrac{1}{2\zeta}\dfrac{d\zeta^2}{2\pi i} \\
&=\int_C \dfrac{ \mu_{12}(x,\zeta) m^{-}_{21} (x_0,\zeta) e^{2i\zeta^2( x- x_0)}}{(\zeta-\zeta_0)\ba(\zeta)\ba_0(\zeta)} \dfrac{d\zeta}{2\pi i} 
 - \int_C \dfrac{ \mu_{12}(x,\zeta) m^{-}_{21} (x_0,\zeta) e^{2i\zeta^2( x- x_0)}}{(-\zeta-\zeta_0)\ba(\zeta)\ba_0(\zeta)}  \dfrac{d\zeta}{2\pi i} \\
 &=I^+_1+I^+_2
\end{align*}
Setting $\lambda=R^2 e^{i\theta}$ with $\theta \in (\pi,0)$, 
we  integrate $I^+_1$   over the arc $\zeta={R} e^{i\eta}$ where $\eta$ goes from $\pi/2$ to $0$. For $I^+_2$, we make the change of variable $\zeta \to-\zeta$, then  using of the oddness of $\mu_{12}$ and $m_{21}$. We obtain 
$$ - \int \dfrac{ \mu_{12}(x,\zeta) m^{-}_{21} (x_0,\zeta) e^{2i\zeta^2( x- x_0)}}{(-\zeta-\zeta_0)\ba(\zeta)\ba_0(\zeta)}  \dfrac{d\zeta}{2\pi i} = \int \dfrac{ \mu_{12}(x,\zeta) m^{-}_{21} (x_0,\zeta) e^{2i\zeta^2( x- x_0)}}{(\zeta-\zeta_0)\ba(\zeta)\ba_0(\zeta)}  \dfrac{d\zeta}{2\pi i}.$$
For $I^+_2$, we integrate over the arc ${R}e^{i\eta}$ with $\eta$ going from $3\pi/2$ to $\pi$. This completes the change of variable for \eqref{Sigma+}. Similarly, 
 we have 
\begin{align*}
I^-&=-\int_{\Gamma_\infty^-}  \frac{\nu_{11}(x,\lambda) n^{-}_{12} (x_0, \lambda) e^{2i\lambda( x_0- x)}}{(\lambda-\lam_0)\alpha(\lambda)\alpha_0(\lambda)}  \frac{d\lambda}{2\pi i} 
 \\
&
=		-\int_C  
			\frac{\mu_{11}(x,\zeta) m^{-}_{12} (x_0, \zeta) e^{2i\zeta^2( x_0- x)}}{\zeta(\zeta^2-\zeta_0^2)a(\zeta) a_0(\zeta)}  
		\frac{d\zeta^2}{2\pi i}  \\
& =	-\int_C 
			\dfrac{1}{2\zeta^2} \left( \dfrac{1}{\zeta-\zeta_0}+
			\dfrac{1}{\zeta+\zeta_0} \right) \frac{\mu_{11}(x,\zeta) m^{-}_{12} (x_0, \zeta) e^{2i\zeta^2( x_0- x)}}{a(\zeta) a_0(\zeta)}  
		\frac{d\zeta^2}{2\pi i} \\
& =	-\int_C 
				\frac{\mu_{11}(x,\zeta) m^{-}_{12} (x_0, \zeta) e^{2i\zeta^2( x_0- x)}}{\zeta_0(\zeta-\zeta_0)a(\zeta) a_0(\zeta)}  
		\frac{d\zeta}{2\pi i} \\
& \quad 
		+ 
		\int_C 
			\frac{\mu_{11}(x,\zeta) m^{-}_{12} (x_0, \zeta) e^{2i\zeta^2( x_0- x)}}{\zeta_0(\zeta+\zeta_0)a(\zeta) a_0(\zeta)}  
		\frac{d\zeta}{2\pi i} \\
&=I^-_1+I^-_2.
\end{align*}
We write  $\lambda=R^2 e^{i\theta}$ with $\theta \in (\pi, 2\pi)$. For $I^-_1$, we  integrate over the arc $\zeta={R} e^{i\eta}$ where $\eta$ goes from $\pi/2$ to $\pi$. For $I^-_2$, we make the change of variable $\zeta\to -\zeta$, then making use of the evenness and  oddness of $\mu_{11}$ and $m_{12}$  respectively to obtain 
\begin{align*}
\int \dfrac{ \mu_{11}(x,\zeta) m^{-}_{12} (x_0,\zeta) e^{2i\zeta^2( x_0-x)}}{\zeta_0(\zeta+\zeta_0)\ba(\zeta)\ba_0(\zeta)}  \dfrac{d\zeta}{2\pi i} &=- \int \dfrac{ \mu_{11}(x,\zeta) m^{-}_{12} (x_0,\zeta) e^{2i\zeta^2( x_0-x)}}{\zeta_0(-\zeta-\zeta_0)\ba(\zeta)\ba_0(\zeta)}  \dfrac{d\zeta}{2\pi i}\\
                                                                                                                  &=- \int \dfrac{ \mu_{11}(x,\zeta) m^{-}_{12} (x_0,\zeta) e^{2i\zeta^2( x_0-x)}}{\zeta_0(\zeta-\zeta_0)\ba(\zeta)\ba_0(\zeta)}  \dfrac{d\zeta}{2\pi i}.
\end{align*}
For $I^-_2$, we integrate over the arc ${R}e^{i\eta}$ where $\eta$ goes from $3\pi/2$ to $2\pi$. 
Integrals involving $\mu_{21}$ and $\mu_{22}$ are derived using complex conjugations.
\end{proof}

\begin{proof}[Proof of Theorem \ref{thm:lambda-unique}]
First, for scattering data $SD \in Y$, the operator $(I-\calC_{W_x})$ is a Fredholm operator on $L^2(\Gamma)$. This follows from \cite[Lemma 2.60]{TO16} since we allow uniform rational approximation of the function  $(\dotarg)\rho(\dotarg)$ for $\rho\in Y_0$. Next, we claim that  $\ker_{L^2(\Gamma)} (I - \calC_{W_x})$ is trivial. If $\nu \in \ker_{L^2(\Gamma)} (I - \calC_{W_x})$, then
by Lemma \ref{lemma:nu.to.mu}, $\nu$ induces a vector $\mu \in \ker_{L^2(\Sigma)}(I-\calC_{w_x})$, which must be the zero vector by Proposition \ref{vanishing-zeta-mu}. It follows from Remark \ref{rem:mu-to-nu} that $\nu=0$. Finally,  from Fredholm theory,  $(I-\calC_{W_x})$ is invertible in $L^2(\Gamma)$, which is equivalent to unique solvabilty of RHP \ref{RHP.lambda}.
\end{proof}


\begin{corollary}
\label{prop:lambda-resolvent}
The resolvent $(I-\calC_{W_x})^{-1}$ exists for all $x \in \R$ and all $SD \in Y$.
\end{corollary}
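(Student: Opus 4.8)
The plan is to observe that Corollary \ref{prop:lambda-resolvent} is an immediate consequence of the proof of Theorem \ref{thm:lambda-unique}, applied with $x$ held fixed but arbitrary. First I would record that, for $SD \in Y$, the weights $W_x^\pm$ lie in $L^2(\Gamma) \cap L^\infty(\Gamma)$ for every $x \in \R$: on $\R_\infty$ the exponential factors $e^{\pm 2i\lambda x}$ are unimodular, so $\norm[L^2 \cap L^\infty]{W_x^\pm}$ restricted to $\R_\infty$ is independent of $x$, while on the bounded arcs $\Gamma_\infty^\pm$ (where $|\lambda| = S_\infty$) the factors $e^{\pm 2i\lambda x}$ are bounded, so the corresponding norms are finite for each $x$. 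In particular $\calC_{W_x}$ is a bounded operator on $L^2(\Gamma)$ for every $x \in \R$, and $(I - \calC_{W_x})$ is exactly the operator denoted $(I - \calC_{W,x})$ in the statement.

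Next I would invoke the two ingredients already assembled in the proof of Theorem \ref{thm:lambda-unique}. On the one hand, $(I - \calC_{W_x})$ is a Fredholm operator of index zero on $L^2(\Gamma)$: this follows from \cite[Lemma 2.60]{TO16}, using that $(\dotarg)\rho(\dotarg)$ admits uniform rational approximation for $\rho \in Y_0$ (property (c) of $Y_0$), a property that is unaffected by multiplication by the unimodular twist $e^{-2i\lambda x}$. On the other hand, $\ker_{L^2(\Gamma)}(I - \calC_{W_x})$ is trivial: any $\nu$ in this kernel solves the homogeneous Beals--Coifman equation \eqref{SIE-nu-homo}, hence by Lemma \ref{lemma:nu.to.mu} induces $\mu \in \ker_{L^2(\Sigma)}(I - \calC_{w_x})$, which vanishes by Proposition \ref{vanishing-zeta-mu}, so $\nu = 0$ by Remark \ref{rem:mu-to-nu}. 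Since these statements hold for every $x \in \R$ and every $SD \in Y$, the Fredholm alternative yields that $(I - \calC_{W_x})$ is invertible on $L^2(\Gamma)$ for each such $x$ and $SD$, i.e.\ $(I - \calC_{W,x})^{-1}$ exists.

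The only point that warrants a moment's attention — and the closest thing to an obstacle — is verifying that the hypotheses underlying Theorem \ref{thm:lambda-unique} are genuinely uniform in $x$, so that the conclusion is available at every $x$ and not merely at one distinguished value; this reduces to the elementary remarks above, namely that multiplication by the unimodular factors $e^{\pm 2i\lambda x}$ on $\R_\infty$ and by bounded factors on the arcs $\Gamma_\infty^\pm$ preserves membership of $W_x^\pm$ in $L^2 \cap L^\infty$ and preserves the rational-approximation property used to establish the Fredholm property. No quantitative control of the resolvent norm is asserted here — that uniform control, over $x \in \R$ and over bounded subsets of $Z$, is obtained later by the continuity-compactness argument of Appendix \ref{app-cc} — so pointwise-in-$x$ invertibility is all that the corollary requires.
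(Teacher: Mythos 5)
Your proposal is correct and follows exactly the route the paper intends: the corollary is stated as an immediate consequence of the proof of Theorem \ref{thm:lambda-unique}, whose argument (Fredholm property via \cite[Lemma 2.60]{TO16} and rational approximation, trivial kernel via Lemma \ref{lemma:nu.to.mu}, Proposition \ref{vanishing-zeta-mu}, and Remark \ref{rem:mu-to-nu}, then the Fredholm alternative) is carried out for arbitrary fixed $x \in \R$ and $SD \in Y$. Your added remarks on the $x$-uniformity of the hypotheses and on deferring the quantitative resolvent bound to the continuity-compactness argument are accurate but not needed beyond what the theorem's proof already supplies.
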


%
%

\section{Mapping Properties of the Inverse Scattering Map}
\label{sec:inverse}

Recall that the potential $q$ is reconstructed by solving the "right" Riemann-Hilbert problem \ref{RHP.lambda}  (for a solution normalized as $x \to +\infty$). As shown in Section \ref{aux-scatt},  the "left" Riemann-Hilbert problem (with jump matrix characterized by Theorem \ref{scattering data tilde}) can be conjugated to the "right" Riemann Hilbert problem, so we concentrate on the mapping properties of the reconstruction from the right. We omit the (standard) proof that the left and right reconstructions agree, as well as the proof that the inverse map composed with the direct map is the identity map on $H^{2,2}(\R)$. Thus, in the statements of Theorems \ref{thm:L^2-decay} and \ref{thm:L^2-smooth}, an assertion is made about the reconstructed potential on $\R$, but details of the proof are only given for the restriction of $q$ to a half-line of the form $(c,\infty)$.

We start  with the reconstruction formula for the potential $q$ from  given scattering data $J_\pm$ as characterized  in Theorem \ref{thm:scattering data}:
\begin{align}
\label{recon}
q(x) &=\left( -\dfrac{1}{\pi}\int_{\Gamma}\nu(x,\lambda)\left(W_x^+(\lambda) + W_x^- (\lambda) \right)d\lambda \right)_{12}\\
\nonumber
       &=\left( -\dfrac{1}{\pi}\int_{\Gamma}\nu(x,\lambda) 
       e^{-i\lambda x \ad\sigma} \left(J_+(\lambda) - J_- (\lambda) \right)d\lambda \right)_{12}
\end{align}
where the $12$ subscript denotes the second entry of the row vector, and $\Gamma$ is the contour shown in Figure \ref{aug-2}.

Let  $\mathbf{A}(\Omega_\pm)$ denote the space of analytic functions in the region $\Omega_\pm$  of the complex plane and  $R(\partial \Omega_\pm )$ the space of functions whose restrictions on $\partial\Omega_\pm$ are rational.  
Following  a reduction technique of \cite{Zhou98}, we  construct functions $\omega_\pm\in\bfA( \Omega_\pm )$ such that, for $k=2$:
\begin{enumerate}
\item[1.] $\omega_\pm \in R(\partial \Omega_\pm )$ and  $\omega_\pm-I=O(z^{-2})$ as $z\rightarrow \infty$.
\item[2.] $\omega_\pm$ has the same triangularity as $J_\pm$, and
\item[3.] $\omega_\pm(z)=J_\pm(z)+o((z-a)^{k-1})$ for 
$a = \pm S_\infty$.
\end{enumerate}

The construction of $\omega_\pm$  is given in \cite[Appendix I]{Zhou89-2}. For example,  consider the approximation of $J_-\restriction_{ \partial \Omega_2}$ . 
Since $(J_- -I)\restriction _{\partial \Omega_2}$ is in $H^2$, we construct a rational function $\omega_-$ such that $(\omega_--J_-)\restriction \partial \Omega_2$ vanishes at $\pm S_\infty$ to order  1. Explicitly 
$$\omega_-(\pm S_\infty)-I=\rho(\pm S_\infty), \qquad \omega'_-(\pm S_\infty)=\rho'(\pm S_\infty).$$
This is performed by the following steps:
\begin{enumerate}
\item[(i).]  Choose $z_0 \notin \overline{\Omega}_2$ and denote $p_{\pm}$ the Taylor polynomial of degree 1 of $(z-z_0)^n\rho(z)$ at $z=\pm S_\infty $.We choose $n\geq 6$.
\item[(ii).] By \cite[Lemma A1.2]{Zhou89-2}, there is  a polynomial $p(z)$ of degree at most 3 such that
$$p(z) -  p_\pm(z) = O(z \mp S_\infty)^2.$$
\item[(iii).] Set $\omega_-(z)= (z-z_0)^{-n}p(z)$. Clearly, $\omega_-(z)-\rho(z)$   vanishes at $\pm S_\infty$ to  order  1. Since $n\geq 6$,  $\omega_-\in H^{2,2}(\partial\Omega_2)$ and $\omega$ is analytic in $\Omega_2$.
\end{enumerate}

By construction,
\begin{equation}
\label{J.to.calJ}
J=\omega_-^{-1}(J_-\omega_-^{-1})^{-1}(J_+\omega_+^{-1})\omega_+
	\equiv 	\omega_-^{-1}\mathcal{J}_-^{-1}\calJ_+\omega_+
	\equiv 	\omega_-^{-1} \calJ \omega_+.
\end{equation}
The advantage of working with $\calJ$ is that  
$\calJ_\pm-I$ vanishes at  $\pm S_\infty$
to 
order $1$ :
\begin{equation}
\label{calJ}
\calJ_\pm(\lambda)=I+o((\lambda-a)^{1}), \quad a=\pm S_\infty.
\end{equation}
The continuity of $\calJ_\pm$ and its derivative at $\lambda=\pm S_\infty$ is a key point to perform the decay estimates for the reconstructed potential.
Notice that $\calJ_\pm$ are defined like  $J_\pm$ in Theorem \ref{thm:scattering data}  and they will be used when establishing estimates such as \eqref{upper-c} and \eqref{lower-c}.
For   $x\geq0$, $\calJ_x$ is the jump \bluetext{ matrix} 
for the  RHP
$$\mathcal{N}_+(x,\lambda)=\mathcal{N}_-(x,\lambda ) \calJ_x(\lambda) \quad \lambda\in \Gamma$$
if and only if  $J_x$ is the jump  \bluetext{ matrix} 
 for the RHP \ref{RHP.lambda}. Here $\bfN=\mathcal{N} e^{-i\lambda x\ad\sigma}\omega$ where $e^{-i\lambda x\ad \sigma}\omega\in \bfA L^{\infty}(\mathbb{C}\setminus\Gamma)\cap \bfA L^2(\bbC \setminus\Gamma)$   is  guaranteed by  construction. 
Note that  $\bfN$ and $\mathcal{N}$ give rise to the same $\nu$, solution of the associated Beals-Coifman equation.
 The potential is given by
$$q(x)=\left( -\dfrac{1}{\pi}\int_{\Gamma}\nu(x,\lambda) 
       e^{-i\lambda x \ad\sigma} \left(\calJ_+(\lambda) - \calJ_- (\lambda) \right)d\lambda \right)_{12}.$$
Due to the large $z$-behavior of $\omega_\pm(z)$, we have
$$ \left( \lim_{z\to\infty} 2iz N(x, z)\right)_{12}= \left( \lim_{z\to\infty} 2iz \mathcal{N}(x, z) \right)_{12} ,$$
which 
shows that   $\omega$ gives no contribution to the reconstruction of $q$ for $x\geq 0$.  We may  thus as well work with $\calJ$. 

The next step consists in augmenting
 the contour as in  Figure~\ref{fig:new.mod.contour} below. The newly modified contour is denoted $\Gamma_m$.
 The advantage of $\Gamma_m$ is that it reverses the orientation of the segment $ (S_\infty^-, S_\infty^+)$ and thus allows 
to prove usual estimates of the Cauchy projections when the contour is restricted to $\bbR$.
The added  (dashed) contours have no effect on the RHP since the jump matrices there are chosen to be the identity.
  
\begin{figure}[H]
\caption{The newly  modified contour $\Gamma_m$}

\vskip 0.5cm

\begin{tikzpicture}
\draw[thick,->-]	(-6,0) 	-- 	(-2,0);
\draw[thick,->-]	(-2,0)			arc(180:0:2);
\draw[thick,->-]	(-2,0)	--	(2,0);
\draw[thick,->-]	(-2,0)			arc(180:360:2);
\draw[thick,->-]	(2,0)	--	(6,0);
\draw[thick,dashed,->-, domain=-2:2] plot ({-\x}, {(4 - \x*\x)/4});
\draw[thick,dashed,->-, domain=-2:2] plot ({-\x}, {-(4 - \x*\x)/4});
\draw[black,fill=black]		(-2,0)		circle(0.06cm);
\draw[black,fill=black]		(2,0)		circle(0.06cm);
\node[below] at (-2.75,0)	{$(-S_\infty,0)$};
\node[below] at (2.55,0)	{$(S_\infty,0)$};
\node[above]	 at (3.5,0)				{$+$};
\node[below]	 at (3.5,0)				{$-$};
\node[above]  at (1.15,0)				{$+$};
\node[below]	 at (1.15,0)				{$-$};
\end{tikzpicture}
\label{fig:new.mod.contour}
\end{figure}
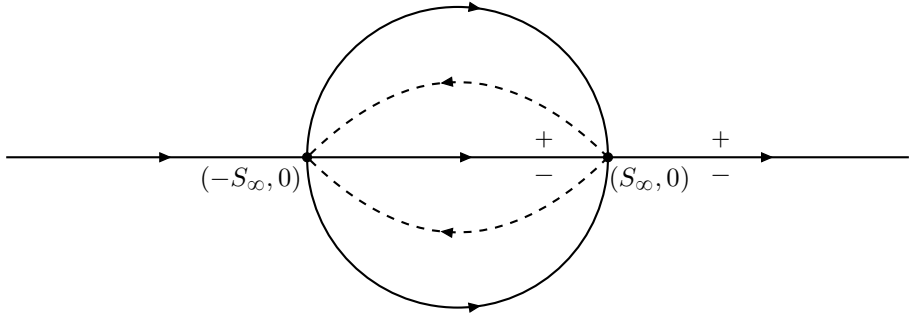
We redefine $\calJ_\pm$ as follows:
\begin{enumerate}
\item[1.] $\calJ_\pm=I$ on the added  (dashed) contours, 
\item[2.] $\calJ_\pm(\lambda)$ is the lower/upper triangular factor in the lower/upper triangular factorization of $\calJ$ ($\calJ^{-1}$) on $\bbR$ for $|\lambda|>|S_\infty|$, ($|\lambda|<|S_\infty|$) and
\item[3.] for $\lambda\in \Gamma_\infty$, $\calJ_\pm(\lambda)=I$ for $\imag \lambda\lessgtr 0$ and $\calJ_\pm(\lambda)=\calJ(\lambda)$ for $\imag \lambda\gtrless 0$.
\end{enumerate}
The newly defined $\calJ_\pm$ satisfy all properties
\bluetext{listed in}  Theorem \ref{thm:scattering data}.  To analyze the scattering map, we will use the revised reconstruction formula
\begin{equation}
\label{q.recon.gamma-m}
q(x)=\left( -\dfrac{1}{\pi}\int_{\Gamma_m}\nu(x,\lambda) 
       e^{-i\lambda x \ad\sigma} \left(\calJ_+(\lambda) - \calJ_- (\lambda) \right)d\lambda \right)_{12}.
\end{equation}
We will at first suppress dependence of the scattering data on $t$ (subsections \ref{sec:recon-decay} and \ref{sec:recon-smooth}), but recall it again in subsection \ref{sec:recon-time}.

Associated to the Riemann-Hilbert problem with jump matrix $\calJ_x$ is a Beals-Coifman integral equation where the Beals-Coifman operator is given by 
\begin{equation}
\label{C.calJx}
C_{\calJ_{x}  }\phi=
	C^+\left[ \phi(\calJ_{x+} -I)\right]+	
	C^-\left[\phi(I-\calJ_{x-})\right].
\end{equation}

Throughout the analysis we will use the following uniform resolvent bound.

\begin{proposition}
\label{prop:resolvent}
Suppose that $\calJ=\omega_-J\omega_+^{-1}$ where $\calJ$ has the form of \eqref{J}-\eqref{J-},  is constructed from scattering data in a bounded  
subset $B$ of $Z$ (see Definition \ref{def-space-Z}), and $\calJ$ admits an algebraic factorization $\calJ=\calJ_-^{-1}\calJ_+$ where $\calJ_\pm$  have \bluetext{the same triangularities  as in} 
Theorem \ref{thm:scattering data}.  Then, for fixed $a\in\bbR$ and all $x\geq a$ the estimate
\begin{equation}
\label{bdd resolvent}
\sup_{\calJ \in B} \norm[L^2 \to L^2]{(I-C_{\calJ_x})^{-1}} < \infty
\end{equation}
holds.
Finally, the map 
$$ \calJ \mapsto \left( x \mapsto (I-C_{\calJ_x})^{-1} \right) $$
is Lipschitz continuous into the space $C\left([a,\infty); \scrB(L^2 )\right)$.
\end{proposition}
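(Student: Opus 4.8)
The plan is to run the continuity--compactness argument of \cite{JLPS17a,Liu17}, using the abstract resolvent machinery collected in Appendix \ref{app-cc}. Write $SD$ for the scattering data underlying $\calJ$; by hypothesis $SD$ ranges over a bounded subset $B$ of $Z$, which under the embedding $Z\hookrightarrow Y$ is precompact in $Y$. Moreover, the passage $J\mapsto\calJ=\omega_-^{-1}\calJ\omega_+$ and the second augmentation to $\Gamma_m$ (on whose added ellipse the jump is the identity) produce an equivalent Riemann--Hilbert problem, so the solvability of $I-C_{\calJ_x}$ on $L^2(\Gamma_m)$ is equivalent to that of $I-\calC_{W_x}$ on $L^2(\Gamma)$. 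Hence, by Theorem \ref{thm:lambda-unique} and Corollary \ref{prop:lambda-resolvent}, $I-C_{\calJ_x}$ is invertible for every $x\in\R$ and every $SD$ in $Y$; equivalently, the modulated jump $\calJ_x$ still satisfies the hypotheses of Proposition \ref{schwarz invariance}, because $e^{-i\lambda x\ad\sigma}$ acts by a unitary conjugation on $\R$ and an anti-unitary one across $\Sigma\setminus\R$, so Zhou's vanishing lemma applies for each $x$. This pointwise invertibility is what I would upgrade to a uniform bound.

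On any compact interval $[a,T]$ the bound follows by a soft argument. Let $\overline B$ denote the (compact) closure of $B$ in $Y$. I would show that $(x,SD)\mapsto C_{\calJ_x}$ is jointly continuous from $[a,T]\times\overline B$ into $\mathcal B(L^2(\Gamma_m))$: continuity in $SD$ because, for $SD\in Y$, the entries of $J$ and of the triangular factors $\calJ_\pm-I$ lie in $L^2\cap L^\infty$ and depend continuously on $SD$ there, while the Cauchy projections are bounded on $L^2$; continuity in $x$ because $\calJ_{x,\pm}-I=e^{-i\lambda x\ad\sigma}(\calJ_\pm-I)$ varies continuously in $x$ as an $L^\infty$ multiplier, the prefactors being unimodular, $\calJ_\pm-I$ decaying at infinity along $\Gamma_m$, and $|e^{-2i\lambda x}-e^{-2i\lambda x'}|\le 2|\lambda|\,|x-x'|$ on bounded portions, so that a cutoff plus dominated convergence closes the estimate. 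Since the limit points of $B$ in $Y$ are again admissible data in $Y$, $I-C_{\calJ_x}$ is invertible at every point of the compact set $[a,T]\times\overline B$, and its inverse is therefore bounded there uniformly.

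The main obstacle is the regime $x\to+\infty$, where $C_{\calJ_x}$ does \emph{not} converge in operator norm, so a naive compactification of $[a,\infty]$ does not suffice. I would handle it, following \cite{Zhou98} and the reduction already set up in Section \ref{sec:inverse}, by splitting $C_{\calJ_x}$ into its contributions from $\R$ and from the semicircular arcs $\Gamma_\infty^\pm$. On the arcs the off-diagonal entries carry the factor $e^{\pm 2i\lambda(x-x_0)}$, which for $x>x_0$ decays exponentially and uniformly over $B$ since $\Gamma_\infty^\pm\subset\{\pm\imag\lambda>0\}$, so that part tends to $0$ in $\mathcal B(L^2)$. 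On $\R$ one writes $\rho$ (and $\rho_0$) as a rational summand---admissible because $\rho\in Y_0$ satisfies property (c), i.e.\ $\lambda\rho(\lambda)\to 0$ with uniform rational approximation---plus an arbitrarily small $L^2\cap L^\infty$ remainder; the rational summand is moved off $\R$ by a contour deformation into the half-plane where the relevant oscillatory factor decays, contributing an operator of exponentially small norm for $x$ large, while the remainder contributes an operator of norm at most any prescribed $\eps$. Thus there is $T$ with $\sup_{SD\in B}\norm[L^2\to L^2]{C_{\calJ_x}}<1/2$ for $x>T$, whence a Neumann series gives $\norm[L^2\to L^2]{(I-C_{\calJ_x})^{-1}}<2$ there; combined with the compact-interval bound this yields the claim. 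I expect the technically heaviest part to be making this splitting, deformation, and transfer back to $\Gamma_m$ uniform over the bounded set $B$---which is precisely what the abstract lemmas of Appendix \ref{app-cc} are meant to organize.
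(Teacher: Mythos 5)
Your overall architecture --- verify the hypotheses of Proposition \ref{prop:cc}, get pointwise invertibility from Corollary \ref{prop:lambda-resolvent}, handle compact $x$-intervals by continuity plus precompactness of $B$ in $Y$, and isolate the regime $x\to+\infty$ --- matches the paper's. But the way you close the $x\to+\infty$ step contains a genuine error. You claim that, after splitting off the arcs and rationally approximating $\rho$, one obtains $\sup_{SD\in B}\norm[L^2\to L^2]{C_{\calJ_x}}<1/2$ for large $x$ and can then invert by a Neumann series. The operator $C_{\calJ_x}$ itself does \emph{not} become small as $x\to+\infty$. Its contribution from $\R$ has the form $\phi\mapsto C^-\bigl[\phi\,\rho(\cdot)e^{-2i(\cdot)x}\bigr]$ (and its mirror image), i.e.\ a single Cauchy projector applied to $\phi$ times a unimodular oscillation times $\rho$; in Fourier variables the oscillation merely translates $\widehat{\phi\rho}$, and one can choose $\phi$ (depending on $x$) whose translated Fourier support lies entirely in the half-line retained by $C^-$, so the operator norm is bounded below uniformly in $x$ as soon as $\rho\not\equiv 0$. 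A single contour deformation does not rescue this: if $g$ is the boundary value of a bounded analytic function on $\C^-$ decaying in the interior, the operator $\phi\mapsto C^-(\phi g)$ still has norm comparable to $\sup_\R|g|$, because $\phi$ carries no analyticity. Decay in $x$ appears only for the \emph{composition} of the two Cauchy projectors with oppositely oscillating symbols placed on either side.

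That composition is exactly what the paper's proof exploits. Setting $\brevcalJ_\pm=\calJ_\pm^{-1}$, one has the parametrix identity $I-T_{\calJ_x}=(I-C_{\calJ_x})(I-C_{\brevcalJ_x})$ with
$$T_{\calJ_x}\phi=C^+\bigl(C^-\phi(\calJ_{x+}-\calJ_{x-})\bigr)\left(I-\brevcalJ_{x-}\right)+C^-\bigl(C^+\phi(\calJ_{x+}-\calJ_{x-})\bigr)\left(\brevcalJ_{x+}-I\right),$$
and it is $\norm[L^2\to L^2]{T_{\calJ_x}}$, not $\norm[L^2\to L^2]{C_{\calJ_x}}$, that tends to $0$ as $x\to+\infty$ (by Zhou's rational-approximation argument, \cite[Theorem 6.1]{Zhou89-1}): there the two oscillations $e^{\pm2i\lambda x}$ sit on opposite sides of the pair $C^+\circ C^-$, and the residue computation produces factors $e^{2i\lambda_j x}$ evaluated at poles strictly inside the correct half-planes. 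The resolvent is then recovered as $(I-C_{\calJ_x})^{-1}=(I-C_{\brevcalJ_x})(I-T_{\calJ_x})^{-1}$, bounded for $x\ge b$. Two smaller points: the decay of the arc contributions is only polynomial in $x$ (the entries of $\calJ_\pm-I$ vanish to first order at $\pm S_\infty$, but $\imag\lambda\to0$ there, cf.\ \eqref{upper-c}--\eqref{lower-c}), not exponential and uniform as you assert; and you need not make the large-$x$ smallness uniform over $B$ --- hypothesis (iii) of Proposition \ref{prop:cc} is a statement for each fixed datum, and uniformity over $B$ is supplied afterwards by precompactness of $B$ in $Y$ together with hypothesis (i).
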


\begin{proof}
We check hypotheses (i)--(iii) of Proposition \ref{prop:cc} with $X=L^2(\R)$, $Y$ as given in Definition \ref{def-space-Y}, and $Z$ as given in Definition \ref{def-space-Z}.

\smallskip

(i) The continuity of the map $(\calJ,x) \mapsto C_{\calJ_x}$ and the uniform continuity estimate follow immediately from \eqref{C.calJx}. 

\smallskip

(ii) The proof of Corollary \ref{prop:lambda-resolvent} applies with no essential change to show that $(I-C_{\calJ_x})^{-1}$ exists for all $x \in \R$ and $\calJ \in Y$.

\smallskip
(iii)  To prove this estimate we need to show that $(I-C_{\calJ_x})^{-1}$ is bounded as $x \to +\infty$ for each fixed $\calJ$. To do this we use a standard parametrix construction and 
approximation argument due to Zhou  \cite{Zhou89-1}. 

Let $\brevcalJ_\pm=(\calJ_\pm)^{-1}$. A standard computation shows that
$$
I-T_{ \calJ_x}=( I-C_{\calJ_{x}  } )(I-C_{\brevcalJ_{x}  })
$$
where 
$$
T_{ \calJ_x} \phi =
	C^+\left( C^-\phi(\calJ_{x+} -\calJ_{x-}) \right)
		\left(I-\brevcalJ_{x-}\right) +
	C^-\left( C^+\phi(\calJ_{x+} -\calJ_{x-}) \right)
		\left(\brevcalJ_{x+} -I\right).
$$
The operator $T_{\calJ_x}$ is compact so that $(I-C_{\brevcalJ_{x}})$ is a Fredholm regulator for $(I-C_{\calJ_x})$.  It suffices to show that 
$$ \lim_{x \to +\infty} \norm[L^2 \to L^2]{T_{\calJ_x}} = 0 $$
since
$$ (I-C_{\calJ_x})^{-1} = (I-C_{\brevcalJ_x}) (I - T_{\calJ_x})^{-1} $$
and $\norm[L^2 \to L^2]{(I-C_{\brevcalJ_x})}$ is bounded uniformly in $x$.
By mimicking the proof of \cite[Theorem 6.1]{Zhou89-1} (with some sign changes since we consider the limit $x \to +\infty$ rather than $x \to -\infty$) we can show by rational approximation that, for \emph{fixed} $\calJ$, $\norm[L^2 \to L^2]{T_{\calJ_x}} \to 0$ as $x \to -\infty$. Taking $b$ so that $\norm[L^2 \to L^2]{T_{\calJ_x}} < 1/2$ for $x \geq b$, we obtain a uniform bound on $\norm[L^2 \to L^2]{(I-C_{\calJ_x})^{-1}}$ for $x \geq b$. Since $x \mapsto (I-C_{\calJ_x})^{-1}$ is continuous, this implies that $\sup_{x \geq a} \norm[L^2 \to L^2]{(I - C_{\calJ_x})^{-1}}$ is bounded for any $a \in \R$. 

We can now apply Proposition \ref{prop:cc} to obtain the uniform bound  and the asserted Lipschitz continuity.

\end{proof}

\subsection{Decay property of the reconstructed potential}
\label{sec:recon-decay}
Denote
$$H^{0,2}(\R)=\left\{ q \in L^2(\R):  x^2 q(x) \in L^2(\R) \right\}.$$
\begin{theorem}
\label{thm:L^2-decay}

If $J$ is given as in Theorem \ref{thm:scattering data} and $q$ is defined by \eqref{q.recon.gamma-m},
then $q\in H^{0,2}(\R)$. 
Moreover, the map from data $\calJ=\calJ_-^{-1}\calJ_+$ defined  in \eqref{J.to.calJ} and  obeying the hypothesis of Theorem \ref{thm:lambda-unique}, to 
\bluetext{  $q \in H^{0,2}(\R)$} is Lipschitz continuous.
\end{theorem}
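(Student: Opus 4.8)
The plan is to prove the $H^{0,2}$-decay estimate for $q$ by showing that $x^2 q(x) \in L^\infty\bigl((c,\infty)\bigr) \cap L^2\bigl((c,\infty)\bigr)$ for every $c \in \R$, and then patching with the analogous estimate for the left reconstruction on $(-\infty,c]$. Starting from \eqref{q.recon.gamma-m}, write $q(x)$ as the $(12)$-entry of $-\tfrac{1}{\pi}\int_{\Gamma_m}\nu(x,\lambda)\,e^{-i\lambda x \ad\sigma}(\calJ_+(\lambda)-\calJ_-(\lambda))\,d\lambda$ and split $\nu = (1,0) + (\nu - (1,0))$, so that $q = q_{\mathrm{lin}} + q_{\mathrm{nl}}$ where $q_{\mathrm{lin}}$ is the linear (first-Born) term coming from $(1,0)$ and $q_{\mathrm{nl}}$ is the remainder involving $\nu - (1,0)$. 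The contour $\Gamma_m$ has a part on the real line (where the oscillatory factor $e^{-2i\lambda x}$ provides, via the Fourier transform, the mechanism converting $H^{2,2}$-regularity and decay of $\rho$ into $x^2$-decay of the potential) and the bounded ellipse/semicircular arcs $\Gamma_\infty^\pm$ (where the relevant jump entries are $C^\infty$ and the integrand is compactly supported in $\lambda$, so integration by parts in $\lambda$ gives $\bigO{x^{-N}}$ decay for any $N$, in particular $x^{-2}$).

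For the linear term: on $R_\infty$ the contribution is essentially $\int e^{-2i\lambda x}\rho(\lambda)\,d\lambda$ (plus the $\rho_0$ piece on $(-S_\infty,S_\infty)$, handled the same way); since $\left.\rho\right|_{\R_\infty} \in H^{2,2}(\R_\infty)$ by Theorem~\ref{thm:scattering data}(i) and Proposition~\ref{lip prop}, two integrations by parts in $\lambda$ (using the vanishing of $\calJ_\pm - I$ to order $1$ at $\pm S_\infty$ from \eqref{calJ}, which is exactly why we pass to $\calJ$) move two powers of $x$ onto the Fourier side, so that $x^2 q_{\mathrm{lin}}(x)$ is, up to bounded factors, a Fourier transform of an $H^{2,2}$ function restricted suitably, hence in $L^2 \cap L^\infty$. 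The arc contributions to $q_{\mathrm{lin}}$ decay faster than any polynomial by non-stationary phase / repeated integration by parts. For the nonlinear term: here I would use the uniform resolvent bound of Proposition~\ref{prop:resolvent}, which gives $\norm[L^2(\Gamma_m)]{\nu - (1,0)} \lesssim \norm[L^2]{\calJ_+ - \calJ_-} \lesssim \|SD\|$ uniformly for $x \geq c$ and $SD$ in a bounded set of $Z$. One then needs to extract two powers of $x$: the standard device (as in \cite{LPS15,JLPS17a}) is to write $\nu - (1,0) = \calC_{W_x}(1,0) + \calC_{W_x}(\nu - (1,0))$, integrate by parts in $\lambda$ inside the Cauchy operators to trade oscillation $e^{-2i\lambda x}$ for factors of $x^{-1}$ acting on $\partial_\lambda$ of the jump data, and iterate; the weighted estimates on $\partial_\lambda^j \calJ_\pm$ coming from $\rho \in H^{2,2}$ together with boundedness of the Cauchy projection on $L^2(\R)$ (valid precisely because the modified contour $\Gamma_m$ restores the standard orientation of $(-S_\infty,S_\infty)$) close the estimate, giving $x^2(\nu - (1,0)) \in L^2$ uniformly and hence $x^2 q_{\mathrm{nl}} \in L^2 \cap L^\infty$.

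For the Lipschitz continuity statement: given two sets of scattering data $\calJ^{(1)}, \calJ^{(2)}$ obeying the hypotheses, with solutions $\nu^{(1)}, \nu^{(2)}$ and potentials $q^{(1)}, q^{(2)}$, write $q^{(1)} - q^{(2)}$ from \eqref{q.recon.gamma-m} as a sum of a term with $\nu^{(1)} - \nu^{(2)}$ against $\calJ^{(1)}_+ - \calJ^{(1)}_-$ and a term with $\nu^{(2)}$ against $(\calJ^{(1)}_+ - \calJ^{(2)}_+) - (\calJ^{(1)}_- - \calJ^{(2)}_-)$. The second resolvent identity, $\nu^{(1)} - \nu^{(2)} = (I - C_{\calJ^{(1)}_x})^{-1}(C_{\calJ^{(1)}_x} - C_{\calJ^{(2)}_x})\nu^{(2)}$, together with the uniform resolvent bound \eqref{bdd resolvent} and the uniform bound on $\nu^{(2)}$, reduces everything to the Lipschitz dependence of $C_{\calJ_x}$ on $\calJ$ — which is immediate from \eqref{C.calJx} — and to the already-established (Proposition~\ref{post lip prop}, Proposition~\ref{lip prop}) Lipschitz dependence of the factors $\calJ_\pm - I$ on the scattering data in the weighted spaces $H^{2,2}(\dee\Omega_2)$ etc.; applying the same integration-by-parts bookkeeping as above then produces the $x^2$-weighted $L^2$ bound on $q^{(1)} - q^{(2)}$ with constant controlled by $\|\calJ^{(1)} - \calJ^{(2)}\|$.

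The main obstacle I anticipate is the bookkeeping in the nonlinear term: one must carefully track how many $\lambda$-derivatives land on $\calJ_\pm$ (limited to two by the $H^{2,2}$ regularity available), ensure that the boundary terms at the self-intersection points $\pm S_\infty$ and at the endpoints of the arcs vanish (this is exactly what the passage $J \rightsquigarrow \calJ$ and the order-$1$ vanishing \eqref{calJ} are arranged to guarantee), and verify that the Cauchy projections remain bounded on $L^2$ after the orientation fix on $\Gamma_m$ — i.e., that no logarithmic losses appear at the new self-intersections. Once the weighted mapping properties of the Cauchy operator on $\Gamma_m$ are in hand (this is the content of the appendices on Sobolev spaces on self-intersecting contours), the rest is a routine but lengthy adaptation of the arguments in \cite{LPS15,JLPS17a,Liu17}.
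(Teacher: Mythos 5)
Your overall architecture (the reconstruction formula \eqref{q.recon.gamma-m}, the uniform resolvent bound, Fourier mapping properties for the leading term, the second resolvent identity for Lipschitz continuity) matches the paper's, but there are two genuine gaps in your treatment of the decay. First, the split $\nu=(1,0)+(\nu-(1,0))$ is too coarse: Proposition \ref{prop:resolvent} gives $\norm[L^2_\lambda]{\nu-(1,0)}$ bounded \emph{uniformly} in $x$ but with no decay in $x$, and your proposed remedy (``integrate by parts inside the Cauchy operators and iterate'') is exactly where the work lies and is not carried out. The paper instead expands $(I-C_{\calJ_x})^{-1}I$ into the four pieces \eqref{int-1}--\eqref{int-4} (identity, first iterate, second iterate, resolvent acting on the second iterate) and exploits the triangularity of Theorem \ref{thm:scattering data}(ii): the $(1,2)$ entry of $\int_2$ vanishes identically; the $(1,2)$ entry of $\int_1$ sees only the strictly upper-triangular factor $\calJ_{-}-I$, which carries the $H^{2,2}$/$H^2$ regularity and hence the full $x^{-2}$ decay; and the terms $\int_3,\int_4$ pair one $(1+x^2)^{-1/2}$ factor (from $\calJ_+-I\in H^1$) against one $(1+x^2)^{-1}$ factor (from $\calJ_--I\in H^2$) via the estimates \eqref{C+}--\eqref{C22} of Lemma \ref{lemma:decay-est}. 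Without this bookkeeping your claim that ``$x^2(\nu-(1,0))\in L^2$ uniformly'' is unsupported; what is actually needed is decay in $x$ of the pairing $\int(\nu-(1,0))(W_x^++W_x^-)$, obtained by Cauchy--Schwarz between two $x$-decaying $L^2_\lambda$ norms, not a weighted bound on $\nu$ itself.

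Second, your assertion that the arc contributions decay like $x^{-N}$ for every $N$ ``by non-stationary phase / repeated integration by parts'' is false: the jump entries on $\Gamma_\infty^\pm$ are only $H^2$ (they involve the Jost function $n_{21}^{-}(x_0,\dotarg)$ and $1/\balpha$, not smooth functions), and the rational modification $\omega_\pm$ arranges only first-order matching of $\calJ_\pm$ with $I$ at $\pm S_\infty$, so integration by parts produces non-vanishing boundary terms after two steps. The correct rates are exactly \eqref{upper-c} and \eqref{lower-c}, namely $(1+x^2)^{-1/2}$ for the $H^1$ factor and $(1+x^2)^{-1}$ for the $H^2$ factor; it is precisely because these rates are limited that the triangular routing of the $(1,2)$ entry is indispensable. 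Your Lipschitz argument via the second resolvent identity does agree with the paper's.
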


\begin{definition}
\label{contour-Gamma}
 Define the  subsets of the contour $\Gamma_m$:
$$
\Gamma_\pm :=\bbR \cup \left( \lbrace \imag \lambda \gtrless 0 \rbrace \cap \Gamma_m\right),
$$
$$
\Gamma' :=\Gamma_\infty^\pm=   \text{either}  \; \Gamma_\infty \cap \lbrace \imag \lambda \gtrless 0 \rbrace \, \, \text{or}  \, \,\bbR.
$$
\end{definition}

\begin{lemma}
\label{lemma:decay-est}
(See \cite[Lemma 2.9]{Zhou98}) For $x\geq 0$,
\begin{align}
\label{C+}
\left\Vert  C^+_{ \Gamma' \to\Gamma_+  }  (I-\calJ_{x-})  \right\Vert_{L^2} &\leq \dfrac{c}{1+x^2}\norm[H^2]{\calJ_- -I}\\
\label{C-}
\norm[L^2]{ C^-_{ \Gamma'\to\Gamma_-  }  (I-\calJ_{x+}) }&\leq \dfrac{c}{ (1+x^2 )^{1/2}}\norm[H^1]{\calJ_+-I}\\
\label{upper-c}
\norm[L^2(\Gamma_\infty^+)]{\calJ_{x+} - I   } &\leq\dfrac{c}{(1+x^2)^{1/2}} \norm[H^1]{\calJ_+ - I   } \\
\label{lower-c}
\norm[L^2(\Gamma_\infty^-)]{\calJ_{x-} - I   } &\leq\dfrac{c}{1+x^2} \norm[H^2]{\calJ_- - I   } \\
\label{C11}
\left( \norm[L^2(\Gamma)]{ (  C_{\calJ_{x}  }  )^2 I} \right)_{11}&\leq \dfrac{c}{1+x^2}\norm[H^1]{\calJ_+-I}\norm[H^2]{\calJ_- -I} \\
\label{C22}
\left( \norm[L^2(\Gamma)]{ (  C_{\calJ_{x}  }  )^2 I} \right)_{22}&\leq \dfrac{c}{(1+x^2)^{1/2}}\norm[H^1]{\calJ_+-I}\norm[H^2]{\calJ_- -I}.
\end{align}
\end{lemma}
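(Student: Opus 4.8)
The plan is to decompose $\Gamma_m$ into its constituent pieces and to reduce each of \eqref{C+}--\eqref{C22} to the corresponding estimate of \cite[Lemma 2.9]{Zhou98}, feeding in the characterization of the scattering data from Theorem \ref{thm:scattering data} and Proposition \ref{lip prop}. First I would record two preliminary facts: $\Gamma_m$ is a finite union of smooth arcs and segments meeting at finitely many points, so $C^\pm$ are bounded on $L^2(\Gamma_m)$ (Appendix \ref{app:Sobolev}); and the added ellipse carries the trivial jump $\calJ_\pm\equiv I$, so that piece contributes nothing to any quantity in \eqref{C+}--\eqref{C22}. It then remains to bound the contributions of the unbounded rays $\R_\infty$, the bounded segment $(-S_\infty,S_\infty)$, and the two bounded semicircular arcs $\Gamma_\infty^\pm$.

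On the real axis, after the modification of Section \ref{sec:inverse} preceding Figure \ref{fig:new.mod.contour}, the factors $\calJ_\pm-I$ are, by the explicit factorizations \eqref{factor-J1} and \eqref{factor-J3}, built from $\rho$ (on $\R_\infty$) and $\rho_0$ (on $(-S_\infty,S_\infty)$), possibly with a factor of $\lambda$; Proposition \ref{lip prop} then gives $\calJ_+-I\in H^1(\R)$ and $\calJ_--I\in H^2(\R)$, and by \eqref{calJ} both vanish together with their first derivatives at $\pm S_\infty$, so neither has an interior jump. For such an $f$ and $x\geq0$ I would use the elementary oscillatory bound
$$
\norm[L^2(\R)]{C^{\mp}\!\left(e^{\pm 2i\lambda x}f\right)}
	\;\leq\;\frac{c}{(1+x^2)^{k/2}}\,\norm[H^k(\R)]{f},\qquad k=1,2,
$$
proved on the Fourier side (the phase shifts the bulk of the spectrum of $f$ out of the support of the projector $C^{\mp}$, and the remaining tail of $\widehat f$ decays at rate $\langle\xi\rangle^{-k}$) or, equivalently, by $k$ integrations by parts in the Cauchy integral with no boundary contribution since $f$ is globally $H^k$. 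Taking $k=2$ against $\norm[H^2]{\calJ_--I}$ and $k=1$ against $\norm[H^1]{\calJ_+-I}$ yields \eqref{C+} and \eqref{C-} in the case $\Gamma'=\R$.

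On the arc $\Gamma_\infty^+$ one has $\calJ_-\equiv I$ and $\calJ_+=\calJ$, lower triangular with single off-diagonal entry $e^{2i\lambda(x-x_0)}\,n_{21}^-(x_0,\lambda)/(\balpha\balpha_0)$; since $\imag\lambda\geq0$ there, $\left|e^{2i\lambda(x-x_0)}\right|\leq e^{2S_\infty x_0}$ for all $x\geq0$ and decays exponentially in $x$ away from the endpoints $\pm S_\infty$. Parametrizing $\lambda=S_\infty e^{i\theta}$, $\theta\in(0,\pi)$, using that the boundary value is $H^1$ in $\theta$ (Proposition \ref{lip prop}) and vanishes at $\theta=0,\pi$ (by \eqref{calJ}), a van der Corput/integration-by-parts estimate gives \eqref{upper-c}; the mirror computation on $\Gamma_\infty^-$, where the relevant entry of $\calJ_{x-}-I$ carries $e^{2i\lambda(x_0-x)}$ with $\imag\lambda\leq0$ and the boundary value is $H^2$, gives \eqref{lower-c}. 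Composing these arc bounds with the boundedness of $C^\pm$ gives \eqref{C+} and \eqref{C-} for the remaining choices $\Gamma'=\Gamma_\infty^\pm$ (one of which is trivial, since $I-\calJ_{x-}=0$ on $\Gamma_\infty^+$). Finally, \eqref{C11} and \eqref{C22} follow from \eqref{C+}--\eqref{lower-c} together with the triangularity statement in Theorem \ref{thm:scattering data}(ii): $C_{\calJ_x}I$ is the sum of a strictly upper- and a strictly lower-triangular term, so a diagonal entry of $(C_{\calJ_x})^2I$ is a sum of products of one strictly upper with one strictly lower factor, and Cauchy--Schwarz (placing one factor in $L^\infty$, the other in $L^2$) bounds it by the stated product of Sobolev norms times the product of the corresponding powers of $(1+x^2)^{-1/2}$.

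The step I expect to be the main obstacle is the bookkeeping at the self-intersection points $\pm S_\infty$: one must verify that the $\omega_\pm$-modification of Section \ref{sec:inverse} really produces factors $\calJ_\pm-I$ that are continuous across $\pm S_\infty$ and vanish there to first order along \emph{every} branch of $\Gamma_m$, so that all boundary terms in the integrations by parts vanish and the full $(1+x^2)^{-1}$ rate (rather than only $(1+x^2)^{-1/2}$) is recovered wherever $H^2$ data is available. This is exactly what property \eqref{calJ} and the construction in \cite[Appendix I]{Zhou89-2} are designed to provide, so once that is in hand the argument reduces to matching each piece of $\Gamma_m$ to the appropriate case of \cite[Lemma 2.9]{Zhou98} and tracking which Sobolev norm controls which power of $(1+x^2)$.
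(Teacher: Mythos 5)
First, a point of comparison: the paper does not prove this lemma at all --- it is imported verbatim as \cite[Lemma 2.9]{Zhou98} and used as a black box in the proof of Theorem \ref{thm:L^2-decay}. So you are supplying an argument where the paper supplies only a citation, and there is no internal proof to match yours against. With that said, your mechanisms for \eqref{C+}--\eqref{lower-c} are the right ones: the Fourier-shift bound $\Vert C^{\mp}(e^{\pm 2i\lam x}f)\Vert_{L^2(\R)}\lesssim (1+x^2)^{-k/2}\Vert f\Vert_{H^k}$ for the real-axis pieces (valid precisely because the $\omega_\pm$-modification makes $\calJ_\pm-I$ vanish to first order at $\pm S_\infty$, so the restrictions to $\R$ are globally $H^2$ resp.\ $H^1$ with no interior jump), and exponential damping $e^{-2(\imag\lam)(x-x_0)}$ combined with the endpoint vanishing \eqref{calJ} for the arcs. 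The difficulty you flag --- boundary values of the Cauchy integral on a branch of $\Gamma_m$ different from the support of the integrand, near $\pm S_\infty$ --- is real and is exactly what the $H^k_z\to H^k_\pm$ mapping property of Appendix \ref{app:Sobolev} is for; you identify it but do not close it.

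The genuine gap is in your treatment of \eqref{C11} and \eqref{C22}. The stated rates are \emph{not} products of the rates from \eqref{C+}--\eqref{lower-c} (that would give $(1+x^2)^{-3/2}$ for both entries), and they are \emph{asymmetric}: the $(1,1)$ entry decays like $(1+x^2)^{-1}$ while the $(2,2)$ entry only decays like $(1+x^2)^{-1/2}$, even though both are sums of products of one strictly upper with one strictly lower factor. A symmetric ``Cauchy--Schwarz with one factor in $L^\infty$'' argument cannot produce this asymmetry, and the stronger product rate your bookkeeping suggests is not achievable. The reason is the projector--phase pairing: $C^+$ gains decay only against the phase $e^{-2i\lam x}$ (carried by the $(1,2)$ entries) and $C^-$ only against $e^{+2i\lam x}$ (carried by the $(2,1)$ entries); in the expansion of $(C_{\calJ_x})^2 I$ each diagonal entry contains inner Cauchy projections in the ``wrong'' pairing, which contribute no decay at all, so exactly one factor in each product supplies decay --- the $H^2$-controlled one for the $(1,1)$ entry and the $H^1$-controlled one for the $(2,2)$ entry. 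To prove \eqref{C11}--\eqref{C22} you must expand $(C_{\calJ_x})^2 I$ using \eqref{C.calJx}, discard the terms killed by the nilpotency of $2\times 2$ strictly triangular matrices, and track entry by entry which projector meets which phase; this is the content of Zhou's Lemma 2.9 that your sketch elides.
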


\begin{proof}[Proof of Theorem \ref{thm:L^2-decay}]
Proposition \ref{prop:resolvent} and Lemma \ref{lemma:decay-est}
provide the tools for estimating the decay of the potential $q$, recalling that $\nu$ appearing in \eqref{recon} is equal to
$(I-C_{\calJ_x^\pm})^{-1} (1,0) $.   
We 
\bluetext{ decompose the following integral into the sums of 4 integrals:}
\begin{equation}
\label{recon integral}
\int  \left( \left(  I -  C_{\calJ_{x}  } \right)^{-1} I \right) e^{-i\lambda x\ad\sigma}(\calJ_+-\calJ_-)d\lam=\int_1+\int_2+\int_3+\int_4
\end{equation}
where the  integrals on the right-hand-side are defined in \eqref{int-1}--\eqref{int-4}.    We  extract information on $q(x)$ from the (1-2) entry. Here and thereafter, the integral sign without subscripts refers to an integral taken  on the entire contour  displayed in   Figure  \ref{fig:new.mod.contour}. We write 
\begin{equation}
\label{int-1}
\int_1:=\int_\bbR \left( \calJ_{x+}-\calJ_{x-} \right)+\int_{\Gamma_\infty^+}\left(  \calJ_{x+}-I\right)+\int_{\Gamma_\infty^-}\left( I- \calJ_{x-}\right).
\end{equation}
Notice that $\calJ_x^- -I$ is strictly upper triangular on $\Gamma$ and $\calJ^- -I$ is in $H^2$ so we conclude that the (1-2) entry of the integral above is in $H^{0,2}$ by mapping properties of the Fourier transform and \eqref{lower-c}. The second integral
\begin{equation}
\label{int-2}
\int_2:=\int \left( C_{\calJ_{x}} I \right) (\calJ_{x+} -\calJ_{x-})
\end{equation}
is zero on the (1-2) entry, it  thus 
makes
no contribution to the reconstruction of $q$. For the third integral,
\begin{align}
\label{int-3}
\int_3 &:=\int \left(  ( C_{\calJ_{x}})^2 I \right) (\calJ_{x+} -\calJ_{x-})\\
\nonumber
          &=\int \left( C^+\left(C^-\left( \calJ_{x+}-I \right)  \right) (I-\calJ_{x-}) \right)\left( \calJ_{x+}-I \right)\\
          \nonumber
          &\quad+ \int \left( C^-\left(C^+\left( I-\calJ_{x-} \right)  \right) (\calJ_{x+}-I) \right)\left(I- \calJ_{x-}\right).
\end{align}
The (1-2) entry is 
\begin{align*}
\int_{\Gamma_\infty^-} &\left( C^-_{\Gamma_+ \to \Gamma}\left(C^+_{\Gamma \to \Gamma_+}\left( I-\calJ_{x-} \right)  \right) (\calJ_{x+}-I) \right)\left(I- \calJ_{x-}\right)\\
&+\int_\bbR \left( C^-_{\Gamma^+ \to \Gamma}\left(C^+_{\Gamma \to \Gamma_+}\left( I-\calJ_{x-} \right)  \right) (\calJ_{x+}-I) \right)C^+_\bbR \left(I- \calJ_{x-}\right),
\end{align*}
and from \eqref{C+} and \eqref{lower-c},  we conclude that 
$$\left\vert \left( \int_3 \right)_{12} \right\vert \leq \dfrac{c}{(1+x^2)^2}.$$ Finally we set $$g=(1-C_{\calJ_{x}  } )^{-1} \left(  (  C_{\calJ_{x}  }  )^2 I \right)   $$
and write
\begin{equation}
\label{int-4}
\left\vert \, \int_4 \, \right\vert : = \left\vert \int \left[  \left( C^+g (I-\calJ_{x-}) \right)\left( \calJ_{x+}-I \right)  +\left( C^- g \left( \calJ_{x+}-I \right)  \right)\left(I- \calJ_{x-}\right)  \right]  \right\vert.
\end{equation}
Again,  the (1-2) entry is given by 
\begin{align*}
\int_{\Gamma_\infty^-} \left( C^- g \left( \calJ_{x+}-I \right)  \right)\left(I- \calJ_{x-}\right) 
+\int_\bbR  \left( C^-_{\Gamma^+\to\bbR} ~ g \left( \calJ_{x+}-I \right)  \right) C_\bbR^+ \left(I- \calJ_{x-}\right) 
\end{align*}
and from \eqref{C+}, \eqref{lower-c},  \eqref{C11}, and Proposition \ref{prop:resolvent}  we conclude that 
$$\left\vert \left( \int_4 \right)_{12} \right\vert \leq \dfrac{c}{(1+x^2)^2}.$$
The estimate for $x\in  (-\infty, a)$ is obtained by considering the RHP with jump condition described in Theorem \ref{scattering data tilde}. 
Lipschitz continuity of the map follows from Proposition
\ref{prop:resolvent} and equation \eqref{recon integral}.
\end{proof}

\subsection{Smoothness property of the reconstructed potential}
\label{sec:recon-smooth}
\begin{theorem}
\label{thm:L^2-smooth}
If the jump matrix $J$ is given by  Theorem \ref{thm:scattering data}  and $q$ is defined by \eqref{q.recon.gamma-m},
then $q\in H^{2,0}(\R)$. Moreover, the map from data $\calJ$ defined as in \eqref{J.to.calJ} and obeying the hypothesis of Theorem \ref{thm:lambda-unique} to 
\bluetext{$q \in H^{2,0}(\R)$} is Lipschitz continuous. 
\end{theorem}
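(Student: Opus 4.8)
The plan is to establish $q\in H^{2,0}(\R)$, i.e.\ $q_{xx}\in L^2(\R)$ — that $q\in L^2(\R)$ is already contained in Theorem~\ref{thm:L^2-decay} — by differentiating the reconstruction formula \eqref{q.recon.gamma-m} twice in $x$ and estimating the resulting terms, following the architecture of the proof of Theorem~\ref{thm:L^2-decay} with the roles of the $\lambda$-\emph{smoothness} and the $\lambda$-\emph{decay} of the scattering data interchanged. As in that proof, the reconstruction on $(-\infty,a)$ is handled by the conjugate (``left'') RHP of Theorem~\ref{scattering data tilde}, so it suffices to treat $x\geq a$, where the uniform resolvent bound \eqref{bdd resolvent} applies. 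The relevant input is Theorem~\ref{thm:scattering data}(i) together with Proposition~\ref{lip prop}: $\rho|_{\R_\infty}\in H^{2,2}(\R_\infty)$, hence $\langle\lambda\rangle^2\rho\in L^2(\R_\infty)$ and, as in Remark~\ref{rem-H1-Y}, $\langle\lambda\rangle\rho\in L^\infty(\R_\infty)$; while $\rho_0\in H^{2,2}(\R)$ and the data on the bounded arcs $\Gamma_\infty^\pm$ lie in $H^2\subset L^\infty$ and vanish to first order at $\pm S_\infty$ by the construction of $\calJ$.

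First I would record two facts. Since $e^{-i\lambda x\ad\sigma}$ multiplies strictly lower-triangular matrices by $e^{2i\lambda x}$ and strictly upper-triangular ones by $e^{-2i\lambda x}$, and $\calJ_\pm-I$ are strictly triangular with entries $\rho,\lambda\overline{\rho}$ on $\R_\infty$, $\rho_0,\lambda\overline{\rho_0}$ on $(-S_\infty,S_\infty)$, and the data of \eqref{J+}--\eqref{J-} on $\Gamma_\infty^\pm$, the $(1,2)$ entry of the integrand in \eqref{q.recon.gamma-m} always carries an oscillatory factor $e^{-2i\lambda x}$ (and $e^{-2i\lambda(x-x_0)}$ on $\Gamma_\infty^-$), whose $x$-derivatives bring down bounded multiples of $\lambda$. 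Moreover $\nu(x,\cdot)$ is differentiable in $x$ as an $L^2(\Gamma_m)$-valued map, with $(I-C_{\calJ_x})\partial_x\nu=(\partial_x C_{\calJ_x})\nu$, where $\partial_x C_{\calJ_x}$ is obtained from $C_{\calJ_x}$ by multiplying each jump entry by the corresponding $\mp 2i\lambda$; since this keeps the $\R_\infty$-data in $L^2\cap L^\infty$ through first order ($\lambda\rho\in L^2\cap L^\infty$), \eqref{bdd resolvent} gives $\partial_x\nu(x,\cdot)\in L^2(\Gamma_m)$ bounded uniformly for $x\geq a$. Then, exactly as in the decay proof, I insert $\nu=I+C_{\calJ_x}I+(C_{\calJ_x})^2(I-C_{\calJ_x})^{-1}I$ and split $q=\int_1+\int_2+\int_3+\int_4$, the analogues of \eqref{int-1}--\eqref{int-4}. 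The $(1,2)$ entry of $\int_1$ is, up to a constant, a Fourier transform of $\rho|_{\R_\infty}$, plus a partial Fourier transform of $\rho_0\mathbf{1}_{(-S_\infty,S_\infty)}$, plus the $\Gamma_\infty^-$ contribution; two $x$-derivatives multiply $\rho,\rho_0$ by $-4\lambda^2$, and since $\lambda^2\rho\in L^2(\R_\infty)$ and $\lambda^2\rho_0\in L^2(\R)$, Plancherel's theorem puts these and their second $x$-derivatives in $L^2(\R_x)$, while the $\Gamma_\infty^-$ contribution carries $e^{-2i\lambda(x-x_0)}$ with $\imag\lambda\leq 0$ and data vanishing to first order at $\pm S_\infty$, hence decays like $(1+x^2)^{-1}$ (Lemma~\ref{lemma:decay-est}) and stays so after two $x$-derivatives, which only produce the bounded factor $\lambda^2$ on $\Gamma_\infty^-$. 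The $(1,2)$ entry of $\int_2$ vanishes identically — as in the decay proof, $C_{\calJ_x}(1,0)$ has vanishing first component by the unipotence of the triangular jump factors, so its pairing with $e^{-i\lambda x\ad\sigma}(\calJ_+-\calJ_-)$ has zero $(1,2)$ entry — and this is stable under $x$-differentiation. Finally $\int_3$ and $\int_4$, together with their first two $x$-derivatives, are $O((1+x^2)^{-1/2})$ or better in $x$, hence in $L^2(\R_x)$: each Cauchy projection against $e^{\pm2i\lambda x}$ times a triangular-factor entry gains at least a factor $(1+x^2)^{-1/2}$ by the estimates \eqref{C+}--\eqref{C22} of Lemma~\ref{lemma:decay-est}, and the resolvent $(I-C_{\calJ_x})^{-1}$ — and, after differentiation, its $x$-derivatives, expressed through $(I-C_{\calJ_x})^{-1}$ and $\partial_x C_{\calJ_x}$ — is controlled uniformly for $x\geq a$ by Proposition~\ref{prop:resolvent}.

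I expect the main obstacle to be the weight bookkeeping in $\int_3$ and $\int_4$: two $x$-derivatives naively produce a factor $\lambda^2$, while $\rho$ only supplies $\langle\lambda\rangle^2\rho\in L^2$ and $\langle\lambda\rangle\rho\in L^\infty$ — in particular $\lambda^2\overline{\rho}$ lies in $L^2$ but not $L^\infty$, and $\lambda^2\rho$ lies in $L^2$ but not $H^2$, so it neither pairs with another merely-$L^2$ factor nor feeds a further $(1+x^2)^{-1}$-gain. The resolution, carried out by the methods of \cite[\S3]{LPS15} and \cite[Lemma~2.9 and \S2]{Zhou98} transplanted to the augmented contour $\Gamma_m$, is to distribute the $\lambda$-powers produced by $\partial_x^2$ across the several oscillatory factors in each product — using Leibniz's rule and the distinct phases appearing in a multiple Cauchy integral — and to exploit the $O(\lambda^{-1})$ decay of the Cauchy transforms of the $\rho$-factors to absorb the extra powers of $\lambda$, so that no single factor is asked for more regularity or decay than is available while the Cauchy operators still supply enough $(1+x^2)^{-1/2}$-factors for $L^2(\R_x)$-integrability. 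Once $q,q_x,q_{xx}\in L^2(\R)$ has been established, Lipschitz continuity of the map $\calJ\mapsto q\in H^{2,0}(\R)$ follows, exactly as in Theorem~\ref{thm:L^2-decay}, from the second resolvent identity together with the uniform bound \eqref{bdd resolvent}.
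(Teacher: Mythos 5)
Your overall strategy --- differentiate \eqref{q.recon.gamma-m} twice in $x$ and rerun the decomposition $\int_1+\dots+\int_4$ with smoothness of the data traded for decay --- is not the paper's route, and the step you yourself flag as ``the main obstacle'' is a genuine gap, not a technicality. Concretely: to differentiate the resolvent part you need $\partial_x\nu$, and your identity $(I-C_{\calJ_x})\partial_x\nu=(\partial_xC_{\calJ_x})\nu$ requires the operator $\partial_xC_{\calJ_x}$, whose triangular-factor entries are the original ones multiplied by $\mp 2i\lam$. On $\R_\infty$ the entry $\lam\overline{\rho}$ becomes $\lam^2\overline{\rho}$, which lies in $L^2$ but not in $L^\infty$; a Beals--Coifman operator $\phi\mapsto C^+(\phi W^-)+C^-(\phi W^+)$ with a merely-$L^2$ symbol is not bounded on $L^2$ (the product of two $L^2$ factors is only $L^1$), so $(\partial_xC_{\calJ_x})\nu$ need not lie in $L^2(\Gamma_m)$ and the uniform bound \eqref{bdd resolvent} cannot be invoked to conclude that $\partial_x\nu\in L^2$ uniformly in $x$. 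The same loss of one power of $\lam$ per derivative recurs in your treatment of $\int_3$ and $\int_4$, and your proposed cure --- ``distribute the $\lam$-powers across the oscillatory factors and exploit the $O(\lam^{-1})$ decay of the Cauchy transforms'' --- is exactly the family of bilinear estimates that would have to be proved; as written it restates the difficulty rather than resolving it.

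The paper avoids this entirely by a different idea, which is the content of Proposition \ref{prop:recon}: the matrix Beals--Coifman solution satisfies the $x$-part of the Lax pair, so the whole product $\nu\,e^{-i\lam x\ad\sigma}(\calJ_+-\calJ_-)$ obeys the first-order ODE \eqref{nu.de.ad}. Differentiating under the integral sign then replaces $d/dx$ by the operator $-i\lam\ad\sigma+\offmat{q}{-\lam\qbar}+P$ applied to the integrand, so each $x$-derivative costs exactly one power of $\lam$ landing on the jump data $\calJ_\pm-I$ (where $\rho\in H^{2,2}(\R_\infty)$ supplies it), plus zeroth-order terms controlled by the already-established facts $q\in L^2\cap L^\infty$ from Theorem \ref{thm:L^2-decay} and, for the second derivative, $q\in H^1$. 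No differentiation of the resolvent, and no redistribution of $\lam$-weights among iterated Cauchy transforms, is ever needed. If you want to salvage a direct-differentiation proof you must supply the estimates you allude to in your last paragraph; the intended argument is the Lax-equation one.
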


In order to study smoothness properties of the reconstructed potential, we first show that the functions $M_\pm$ solving RHP \ref{RHP.zeta} solve a differential equation in the $x$ variable. It follows that the same is true of the solution $\mu$ of \eqref{SIE-mu} since $\mu$ is obtained from either $M_+$ or $M_-$ through postmultiplication by a matrix 
of the form $e^{-ix\zeta^2 \ad \sigma} A(\zeta)$. We can then change variables to find a differential equation in $x$ obeyed by the (matrix-valued) solution $\nu$ of RHP \ref{RHP.lambda}. 

\begin{proposition}
\label{prop:recon}
The functions $M_\pm$ obey the differential equation
\eqref{LS.m} where $M$, $P$ and $Q$ are constructed from the solution $\mu$ of \eqref{SIE-mu} as follows:
\begin{align*}
M(x,\zeta)&=I+\int_{\Sigma} \dfrac{\mu(x,s)\left( w_x^+(s)+w_x^-(s)  \right)}{s-\zeta}\dfrac{ds}{2\pi i}\\
Q(x) 	&=	-\frac{1}{2\pi} \ad \sigma  
						\left( \int_{\Sigma} 
									\mu(x,\zeta) 
										\left(
											w_x^+(\zeta)+ w_x^-(\zeta)
										\right) 
								\, d\zeta
						\right)\\
P(x)	&=	Q(x) i (\ad \sigma)^{-1} Q(x). 
\end{align*}
\end{proposition}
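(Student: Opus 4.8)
The plan is to show that the matrix $M(x,\zeta)$ built from $\mu$ by the stated Cauchy integral is the (unique) solution of RHP~\ref{RHP.zeta}, that it is everywhere invertible and differentiable in $x$, and that it has a full asymptotic expansion $M(x,\zeta) = I + M_1(x)\zeta^{-1} + M_2(x)\zeta^{-2} + \bigO{\zeta^{-3}}$ as $\zeta\to\infty$ with coefficients smooth in $x$. Invertibility is immediate because each jump matrix in \eqref{v}--\eqref{v-circle--} has determinant one, so $\det M$ is entire with $\det M\to 1$, whence $\det M\equiv 1$ by Liouville. Expanding $(s-\zeta)^{-1}$ under the integral identifies $M_1(x) = -\tfrac{1}{2\pi i}\int_\Sigma \mu(x,s)\bigl(w_x^+(s)+w_x^-(s)\bigr)\,ds$, so the reconstruction formula is exactly $Q(x) = -\tfrac{1}{2\pi}\ad\sigma\bigl(\int_\Sigma \mu(w_x^+ + w_x^-)\,ds\bigr) = i\,\ad\sigma\bigl(M_1(x)\bigr)$; in particular $Q$ is off-diagonal. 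Moreover the symmetry $M(x,-\zeta) = \sigma M(x,\zeta)\sigma$, inherited from \eqref{a.sym}--\eqref{b.sym} and the even/odd structure of the Jost integral equations under $\zeta\mapsto-\zeta$, forces $M_1$ off-diagonal and $M_2$ diagonal, so $\ad\sigma(M_2)=0$. Since $(\ad\sigma)^{-1}\ad\sigma$ is the identity on off-diagonal matrices, we get $P = Q\,i(\ad\sigma)^{-1}Q = Q\,i\,(\ad\sigma)^{-1}\bigl(i\,\ad\sigma M_1\bigr) = -Q M_1$.

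The key step is a no-jump argument for the residual $\widetilde M := (LM)M^{-1}$, where $LM := \partial_x M + i\zeta^2\ad\sigma(M) - \zeta Q M - P M$; the Proposition asserts precisely $LM\equiv 0$. Write the jump in RHP~\ref{RHP.zeta} as $M_+ = M_- V$ with $V(x,\zeta) = e^{-ix\zeta^2\ad\sigma}v(\zeta)$ and $v$ independent of $x$. Directly from the definition of $e^{-ix\zeta^2\ad\sigma}$ one has $\partial_x V = -i\zeta^2\ad\sigma(V)$. Differentiating $M_+ = M_- V$ in $x$ and using that $\ad\sigma$ is a derivation, a short computation gives $LM_+ = (LM_-)V$, hence $(LM_+)M_+^{-1} = (LM_-)M_-^{-1}$, i.e. $\widetilde M_+ = \widetilde M_-$ on every smooth arc of $\Sigma$. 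Thus $\widetilde M$ continues analytically across $\Sigma$; since $M$ (hence $M^{-1}$ and $\widetilde M$) is continuous at the self-intersection points by the matching conditions built into RHP~\ref{RHP.zeta}, Morera's theorem together with removability of point singularities shows $\widetilde M$ is entire.

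It then remains to read off the behaviour of $\widetilde M$ at infinity. Substituting the expansion of $M$ into $LM$ and using $\ad\sigma(I)=0$, the coefficient of $\zeta^1$ is $i\,\ad\sigma(M_1) - Q$, which vanishes by the choice of $Q$, and the coefficient of $\zeta^0$ is $i\,\ad\sigma(M_2) - Q M_1 - P$, which vanishes since $M_2$ is diagonal and $P = -Q M_1$. Hence $LM = \bigO{\zeta^{-1}}$ and therefore $\widetilde M = (LM)M^{-1} = \bigO{\zeta^{-1}}$ as $\zeta\to\infty$. An entire function vanishing at infinity is identically zero, so $LM\equiv 0$; that is, $M$ solves \eqref{LS.m} with the stated $Q$ and $P$, and passing to boundary values on $\Sigma$ shows $M_\pm$ solve \eqref{LS.m} as well.

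The hard part will be the preliminary step: establishing that $M$ is $C^1$ in $x$ and carries the two-term asymptotic expansion with smooth coefficients. The difficulty is that $\partial_x$ applied to $w_x^\pm$ produces a factor $\zeta^2$, so one needs the scattering data in the stronger space $Z$ (Definition~\ref{def-space-Z}, available by Proposition~\ref{lip prop}) together with the uniform resolvent bounds of Corollary~\ref{prop:lambda-resolvent} and Proposition~\ref{prop:resolvent} in order to differentiate the Beals--Coifman equation \eqref{SIE-mu} and to justify expanding $(s-\zeta)^{-1}$ under the integral over the unbounded parts of $\Sigma$ in spite of the oscillatory factors $e^{\pm 2i\zeta^2 x}$; everything downstream of that is the formal computation above.
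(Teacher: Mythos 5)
Your argument is correct and is essentially the same one the paper relies on: the paper gives no details here, deferring to Proposition 5.3.1 of \cite{Liu17}, whose proof is exactly this standard Beals--Coifman dressing argument (the residual $(LM)M^{-1}$ has no jump across $\Sigma$ because the conjugated jump matrix satisfies $\partial_x V=-i\zeta^2\ad\sigma(V)$, is entire after removing the intersection points, and vanishes at infinity once $Q$ and $P$ are chosen to kill the $\zeta^1$ and $\zeta^0$ coefficients), with the only modification being the extra bounded contour $\Sigma_\infty$, which you correctly note contributes nothing to the large-$\zeta$ expansion. Your identification of the remaining analytic work (differentiability of $\mu$ in $x$ and the two-term expansion, using the space $Z$ and the resolvent bounds) matches where the cited proof actually spends its effort.
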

The proof of the above proposition is a slight modification of the proof of Proposition 5.3.1 in \cite{Liu17}. Here we need take into account of the integration along the additional circle $\Sigma_\infty$. 
\begin{proof}[Proof of Theorem \ref{thm:L^2-smooth}]
We first notice that $\mu$ given by \eqref{mu-def.} solves the linear problem \eqref{LS.m}:
$$
\frac{d}{dx}\mu = \left( -i\zeta^2 \ad \sigma + \zeta Q(x) + P(x)\right)\mu.
$$
 We now use the change of variable $\zeta\to\lambda$ 
 to obtain 
$$
\frac{d}{dx}\nu	=	\left( -i\lambda \ad \sigma   +\offmat{q}{-\lam \qbar} + P \right)\nu.
$$
We further write
\begin{multline}
\label{nu.de.ad}
\dfrac{d}{dx} \left( \nu e^{-i\lambda x \ad\sigma} \left(\calJ_+ - \calJ_-  \right) \right)	=	\\
\left( -i\lambda \ad \sigma   +\offmat{q}{-\lam \qbar} + P \right)   \left( \nu  e^{-i\lambda x \ad\sigma} \left(\calJ_+- \calJ_- \right)  \right).
\end{multline}
Unlike RHP \ref{RHP.lambda} in which $\nu$  appears as a row vector, here $\nu$ is a $2\times 2$ matrix:
$$  \nu =\Twomat{\nu_{11}(x,\lambda)}
									{ \nu_{12}(x,\lambda)}
									{-\lambda \overline{\nu_{12}(x,\overline{\lambda}) } }
									{\overline{ \nu_{11}(x,\overline{\lambda} ) } } $$
and its first row $(\nu_{11}, \nu_{12})$ is the solution to RHP \ref{RHP.lambda}.
We  integrate both sides  of \eqref{nu.de.ad} along the contour  shown in Figure \ref{fig:new.mod.contour} 
\begin{multline*}
\dfrac{d}{dx} \int \left( \nu e^{-i\lambda x \ad\sigma} \left(\calJ_+ - \calJ_-  \right) \right)	=\\
		\int 
			\left( -i\lambda \ad \sigma   +\offmat{q}{-\lam \qbar} +P \right)   
			\left( \nu  e^{-i\lambda x \ad\sigma} 
				\left(\calJ_+- \calJ_- \right)  
			\right).
\end{multline*}
The potential $q$ is given by the (1-2) entry of this matrix form integral.
Using  that $\calJ_-\in H^{2,2}(\Gamma)$,  the (1-2) entry of 
$$\int -i\lambda\ad\sigma \left( \nu  e^{-i\lambda x \ad\sigma} \left(\calJ_+- \calJ_- \right) \right)$$
is an $L^2$-function of $x$, following the same argument  as in the proof of Theorem \ref{thm:L^2-decay}. To show that the (1-2) entry of 
$$
\int \left( \offmat{q}{-\lam \qbar}
+P \right)   \left( \nu  e^{-i\lambda x \ad\sigma} \left(\calJ_+- \calJ_- \right)  \right)
$$
is an $L^2$-function of $x$, we  use that $q\in L^2\cap L^\infty$ which comes from the fact that 
$|q|\leq {c}/{(1+x^2 )^2}$  shown in Theorem \ref{thm:L^2-decay}.
This proves that $q_x\in L^2$.  To estimate  $q_{xx}$, we differentiate  \eqref{nu.de.ad} with respect to $x$. 
Explicitly, we have that 
$$
\dfrac{d^2}{dx^2} \int \left( \nu e^{-i\lambda x \ad\sigma} \left(\calJ_+ - \calJ_-  \right) \right) =\int_1+\int_2
$$
where
$$\int_1:=\int \left( -i\lambda \ad \sigma   +\offmat{q}{-\lam \qbar} + P \right)^2 \left( \nu e^{-i\lambda x \ad\sigma} \left(\calJ_+ - \calJ_-  \right) \right).  $$
and 
$$ \int_2 :=\int \left( \offmat{q}{-\lam \qbar}_x + P_x \right)\left( \nu e^{-i\lambda x \ad\sigma} \left(\calJ_+ - \calJ_-  \right) \right)   $$
 Again following the previous argument and using that $\calJ_-\in H^{2,2}$ and $q\in H^1$, we conclude that $q_{xx}\in L^2 (-a, +\infty)$. A similar argument using scattering data given by Theorem \ref{scattering data tilde} and solving the corresponding RHP shows that $q_{xx}\in L^2(-\infty, a)$. 
Lipschitz continuity of the map follows from the uniform boundedness of the resolvent operator given by \eqref{bdd resolvent} and  Proposition \ref{prop:cc}. This completes the proof of Theorem \ref{thm:L^2-smooth}.
\end{proof}

\subsection{Time Evolution of the Reconstructed Potential}
\label{sec:recon-time}

 We now recall the explicit time-dependence of $q(x,t)$ on $t$ through the law of evolution \eqref{v.ev.zeta}, and write the reconstruction formula as
$$
q(x,t)=\left( -\dfrac{1}{\pi}\int_{\Gamma_m}\nu(x,\lambda,t) 
       e^{it\theta(x,t,\lam)} \left(\calJ_+(\lam) - \calJ_- (\lam) \right)d\lam \right)_{12}	
$$
where $\Gamma_m$ is the contour shown in Figure \ref{aug-2} and
$$ \theta(x,t,\lam) =-2 \lam^2 - (x/t)\lambda. $$
To study the time-evolution of $q(x,t)$, it will be convenient to work in the $\zeta$ variable
and write
\begin{equation}
\label{q.recon.t}
q(x,t) =\left( -\dfrac{1}{\pi}\int_{\Sigma_m}\mu(x,\zeta,t) 
       e^{it\theta(x,\zeta^2,t)}\left(\widetilde{\calJ_+}(\zeta) - \widetilde{\calJ_-} (\zeta) \right)d\zeta \right)_{12}
\end{equation}
where $\Sigma_m$, shown in Figure~\ref{fig:Gamma-m}, is the inverse image of $\Gamma_m$ under the map $\zeta\mapsto \lambda=\zeta^2$, 
$\widetilde{J^\pm}$ are the scattering data corresponding to $\calJ^\pm$ under the change of variables
and $\mu$ solves the Beals-Coifman integral equation corresponding to the scattering data $\widetilde{J^\pm}$.

\begin{figure}[H]
\caption{The Modified Contour $\Sigma_m$}

\medskip

\begin{tikzpicture}[scale=0.5]
\draw[thick,->-]	(0,4)		--	(0,0);
\draw[thick,->-]	(0,0)		--	(-4,0);
\draw[thick,->-]	(0,0)		--	(4,0);
\draw[thick,->-]	(0,-4)		--	(0,0);
\draw[thick,->-]	(0,-6)		--	(0,-4);
\draw[thick,->-]	(-4,0)		--	(-6,0);
\draw[thick,->-]	(4,0)		--	(6,0);
\draw[thick,->-]	(0,6)		--	(0,4);
\draw[thick,->-]	(0,4)				arc(90:180:4);
\draw[thick,->-]	(0,4)				arc(90:0:4);
\draw[thick,->-]	(0,-4)				arc(270:180:4);
\draw[thick,->-]	(0,-4)				arc(270:360:4);
\draw[dashed,->-,>=stealth]		(4,0)	--	(0,4);
\draw[dashed,->-,>=stealth]		(-4,0)	--	(0,4);
\draw[dashed,->-,>=stealth]		(-4,0)	--	(0,-4);
\draw[dashed,->-,>=stealth]		(4,0)	--	(0,-4);
\end{tikzpicture}
\label{fig:Gamma-m}
\end{figure}
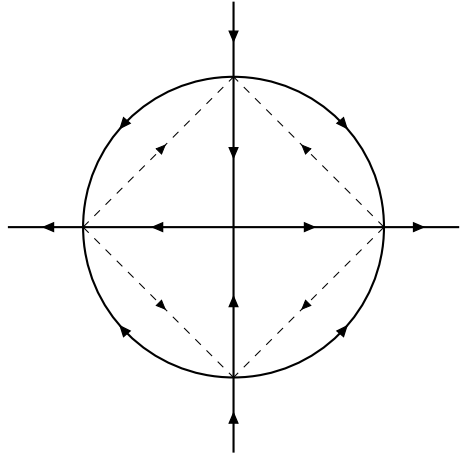

The continuity of the direct and inverse maps implies that, given Cauchy data $q_0 \in H^{2,2}(\R)$, we may approximate  $q_0$ by a sequence $\{ q_n \}$ from $\calS(\R)$ that converges in $H^{2,2}(\R)$ to $q_0$ and obtain a sequence of approximants $q_n(x,t)$ for $q(x,t)$ which converge in $H^{2,2}(\R)$ as $n \to \infty$ uniformly in $t$ in any bounded interval. It follows that, in order to prove that $q(x,t)$ given by \eqref{q.recon.t} is a weak solution of \eqref{DNLS2}, it suffices to assume that $q_0 \in \calS(\R)$ and argue by approximation.

The following proposition can be proved using the same technique used to prove \cite[Proposition 7.0.4]{Liu17}).
\begin{proposition}
\label{prop:RHP.t}
Suppose that  $M_\pm$
solve the RHP \eqref{RHP.zeta}. Let 
\begin{align*}
 Q(x,t) 	&= {  -  } \frac{1}{2\pi} \ad \sigma  
						\left[ \int_{\Sigma_m} \mu(x,\zeta) 
								\left(\widetilde{J^+}(\zeta)  - \widetilde{J^-}(\zeta) \right) d\zeta
						\right]=\twomat{0}{q}{-\qbar}{0}    \\
P(x,t) 	&=	i Q(x,t) (\ad \sigma )^{-1} Q(x,t) = 
						\frac{i}{2} \diagmat{|q|^2}{-|q|^2},
\end{align*}
and $A(x,t)$ given as in \eqref{RHP.A}. 
 Then $M_\pm$  are fundamental solutions of  the Lax equations \eqref{RHP.Lax}.
\end{proposition}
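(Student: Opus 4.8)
The plan is to run the classical dressing argument: once a Riemann--Hilbert problem has a jump matrix that evolves in $t$ by conjugation with $e^{-2it\zeta^4\sigma}$, its solution automatically satisfies a second Lax equation. This is the $t$-analogue of Proposition~\ref{prop:recon} and of \cite[Proposition 7.0.4]{Liu17}, the only new feature here being that the contour $\Sigma$ carries the circle $\Sigma_\infty$ and the associated self-intersection points. By the approximation remark preceding the statement I may assume $q_0\in\calS(\R)$, so that the scattering data are Schwartz class, the relevant resolvents exist and are bounded locally in $(x,t)$ (Section~\ref{sec:unique-solv} and Proposition~\ref{prop:resolvent}, Corollary~\ref{prop:lambda-resolvent}), and hence $\mu$, and therefore $\bfM_\pm$, are smooth in $(x,t)$ and analytic in $\zeta\in\C\setminus\Sigma$ with a complete asymptotic expansion $\bfM=I+\zeta^{-1}M_1+\zeta^{-2}M_2+\cdots$ at $\zeta=\infty$. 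Since every jump matrix in \eqref{v}--\eqref{v-circle--} has unit determinant, $\det\bfM\equiv1$, so $\bfM^{-1}$ is bounded on compact subsets of $\C\setminus\Sigma$ with continuous boundary values, and it remains bounded up to the self-intersection points of $\Sigma$ thanks to the matching conditions built into the RHP (Theorem~\ref{thm:scattering data}). That $\bfM_\pm$ already satisfy \eqref{RHP.Lax.x} with $Q$ and $P$ as in the statement is Proposition~\ref{prop:recon} (equivalently \cite[Proposition 5.3.1]{Liu17}); only \eqref{RHP.Lax.t} must be established.

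First I would introduce the matrix
\[
\calA(x,t,\zeta):=(\partial_t\bfM_\pm)\,\bfM_\pm^{-1}+2i\zeta^4\,\ad\sigma(\bfM_\pm)\,\bfM_\pm^{-1},
\]
and show that it does not depend on whether one uses $\bfM_+$ or $\bfM_-$. Writing the time-evolved jump relation of RHP~\ref{RHP.zeta} as $\bfM_+=\bfM_-\,v_x(t;\zeta)$ with $v_x(t;\zeta)=e^{-i(x\zeta^2+2t\zeta^4)\ad\sigma}v(\zeta)$, the relation $\partial_t v_x=-2i\zeta^4\,\ad\sigma(v_x)$ supplied by \eqref{v.ev.zeta}, combined with the product rule, gives in one line that $\calA$ computed from $\bfM_+$ agrees with $\calA$ computed from $\bfM_-$ across every smooth arc of $\Sigma$. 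This identity is local in $\zeta$, so it is unaffected by the self-intersections; and since $\bfM$, $\bfM^{-1}$, and $\partial_t\bfM$ stay bounded near those points, Riemann's removable-singularity theorem shows $\calA$ extends analytically across them. Hence $\calA(x,t,\cdot)$ is entire in $\zeta$ for each fixed $(x,t)$.

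Next I would identify $\calA$. From the expansion of $\bfM$ one has $\partial_t\bfM=\bigO{\zeta^{-1}}$ and $2i\zeta^4\,\ad\sigma(\bfM)\,\bfM^{-1}=2i\zeta^{3}\,\ad\sigma(M_1)+\bigO{\zeta^{2}}$, so $\calA=\bigO{\zeta^{3}}$; by Liouville, $\calA(x,t,\zeta)=\sum_{k=0}^{3}\zeta^k A_k(x,t)$ is a matrix polynomial of degree at most $3$. Substituting the expansion into the defining identity $\partial_t\bfM+2i\zeta^4\,\ad\sigma(\bfM)=\calA\bfM$ and matching the coefficients of $\zeta^3,\zeta^2,\zeta^1,\zeta^0$ expresses $A_3,A_2,A_1,A_0$ in succession through $M_1,\dots,M_4$, which are in turn expressed in terms of $Q$, $P$, and their $x$-derivatives by matching powers of $\zeta$ in the $x$-equation \eqref{LS.m} (the off-diagonal part of $M_1$ being fixed by the reconstruction formula \eqref{q-recon}). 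For example, the $\zeta^3$-matching gives $A_3=2i\,\ad\sigma(M_1)=2\offmat{q}{-\qbar}$, the leading term of \eqref{RHP.A}; pushing through $\zeta^2,\zeta^1,\zeta^0$ reproduces the remaining terms of \eqref{RHP.A} exactly as in \cite[Proposition 7.0.4]{Liu17}. This bookkeeping only involves the unbounded rays of $\Sigma$, since $\Sigma_\infty\subset\{|\zeta|\le R\}$ and contributes nothing to the $\zeta=\infty$ expansion. With $\calA=A$ as in \eqref{RHP.A} we obtain \eqref{RHP.Lax.t}, which together with \eqref{RHP.Lax.x} shows that $\bfM_\pm$ are fundamental solutions of \eqref{RHP.Lax}.

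The main obstacle is not the order-by-order algebra, which is routine and already carried out in \cite{Liu17}, but the two structural inputs to the second paragraph: that $\bfM_\pm$ are genuinely differentiable in $t$, and that $\calA$ has no hidden singularities. The first is handled by the reduction to Schwartz data and the uniform resolvent bound of Proposition~\ref{prop:resolvent}. The second requires $\det\bfM\equiv1$, so that $\bfM^{-1}$ never blows up on the rays, together with the boundedness of $\bfM$ near the corners of the augmented contour $\Sigma$; this boundedness is exactly what Zhou's matching/product conditions of Theorem~\ref{thm:scattering data} guarantee, and those conditions are preserved under the time flow because that flow acts on the jump matrices by the conjugation $e^{-2it\zeta^4\ad\sigma}$, which fixes the identity and hence fixes the product relation at the self-intersection points. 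Once these are secured, the Liouville argument and the identification of the coefficients proceed without further difficulty.
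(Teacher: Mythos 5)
Your proof is correct and takes essentially the same route as the paper, which simply invokes the dressing/Liouville technique of \cite[Proposition 7.0.4]{Liu17}: the jump-free quantity $(\partial_t\bfM_\pm)\bfM_\pm^{-1}+2i\zeta^4\ad\sigma(\bfM_\pm)\bfM_\pm^{-1}$, removability of the singularities at the self-intersection points, and coefficient matching at $\zeta=\infty$ against \eqref{RHP.A}. The additional care you take with the circle $\Sigma_\infty$ and the corner points is precisely the modification the paper alludes to when adapting Liu's argument to the augmented contour.
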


Given a fundamental solution of the Lax equations \eqref{RHP.Lax}, it now follows by a standard argument  \cite[Appendix B]{LPS15}, that $q(x,t)$, defined as the $(1,2)$ entry of $Q(x,t)$, solves the integrable equation \eqref{DNLS2}. Thus:

\begin{proposition}
\label{prop:q-solves}
Suppose that $q_0 \in H^{2,2}(\R)$ and let $J_\pm$ be the corresponding scattering data. Then
$q(x,t)$ defined by \eqref{q.recon.t} solves \eqref{DNLS2}.
\end{proposition}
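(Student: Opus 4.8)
The plan is to deduce the result from the algebraic fact, already established in Proposition \ref{prop:RHP.t}, that the reconstructed objects solve the Lax pair, combined with a density argument that reduces verification of the PDE to Schwartz initial data.

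\emph{Reduction to Schwartz data.} First I would note that the composition of the time evolution of the scattering data (Proposition \ref{scattering-continuous}) with the inverse scattering map (Theorems \ref{thm:L^2-decay} and \ref{thm:L^2-smooth}) is continuous, and locally Lipschitz, from $H^{2,2}(\R)$ into $C([-T,T],H^{2,2}(\R))$; in particular $t\mapsto q(\dotarg,t)$ is continuous into $H^{2,2}(\R)$. Given $q_0\in H^{2,2}(\R)$, choose $q_n\in\calS(\R)$ with $q_n\to q_0$ in $H^{2,2}(\R)$; the corresponding reconstructed potentials $q_n(x,t)$ then converge to $q(x,t)$ in $C([-T,T],H^{2,2}(\R))$. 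Since $H^{2,2}(\R)\hookrightarrow L^\infty(\R)$, the nonlinear terms $q^2\qbar_x$ and $|q|^4q$ are continuous maps on $C([-T,T],H^{2,2}(\R))$, so if each $q_n$ solves \eqref{DNLS2} in the sense of distributions then $q$ does too. Hence it suffices to treat $q_0\in\calS(\R)$.

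\emph{Schwartz data: zero curvature.} For $q_0\in\calS(\R)$, the decay and smoothness of the scattering data permit differentiation under the integral sign in the Beals--Coifman representation, so the boundary values $M_\pm$ of the solution of RHP \ref{RHP.zeta} with time-evolved data are $C^1$ in $x$ and $t$. By Proposition \ref{prop:RHP.t}, these $M_\pm$ are simultaneous fundamental solutions of the linear equations \eqref{RHP.Lax.x}--\eqref{RHP.Lax.t}, with $Q$, $P$ reconstructed from $\mu$ and $A$ given by \eqref{RHP.A}. Writing $U$ and $V$ for the coefficient operators on the right-hand sides of \eqref{RHP.Lax.x} and \eqref{RHP.Lax.t}, the existence of a common fundamental solution forces the zero-curvature condition
$$ U_t - V_x + [U,V] = 0 , $$
and, equating coefficients of powers of $\zeta$, this identity is equivalent to $q$, the $(1,2)$ entry of $Q$, solving \eqref{DNLS2}; this is precisely the computation carried out in \cite[Appendix B]{LPS15}, which I would cite rather than reproduce. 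Finally, at $t=0$ the time-evolved scattering data reduce to the direct scattering data of $q_0$, so \eqref{q.recon.t} evaluated at $t=0$ is the inverse scattering map applied to those data; since that composition is the identity on $H^{2,2}(\R)$ (the fact recorded in Section \ref{sec:inverse}), $q(\dotarg,0)=q_0$.

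\emph{Main obstacle.} The algebraic step is classical; the real work lies in making the reduction rigorous. One needs to know that the inverse map lands in $C([-T,T],H^{2,2}(\R))$ with Lipschitz bounds uniform over the approximating sequence and over $t\in[-T,T]$, so that distributional limits of solutions remain solutions with the stated regularity. This rests entirely on the uniform resolvent bound of Proposition \ref{prop:resolvent} together with the mapping properties of Section \ref{sec:inverse}; the subtlety is bookkeeping --- checking that those estimates are uniform in the relevant parameters and that the scattering data of the approximants remain in a fixed bounded subset of $Z$.
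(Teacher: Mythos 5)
Your proposal is correct and follows essentially the same route as the paper: reduce to $q_0\in\calS(\R)$ by the density/continuity argument, then invoke Proposition \ref{prop:RHP.t} to realize $M_\pm$ as simultaneous fundamental solutions of the Lax pair \eqref{RHP.Lax} and conclude via the standard zero-curvature computation of \cite[Appendix B]{LPS15}. The extra details you supply (passing the nonlinearity to the limit in $C([-T,T],H^{2,2}(\R))$, checking $q(\dotarg,0)=q_0$) are consistent with, and slightly more explicit than, the paper's own brief treatment.
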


\subsection{Proof of Theorem \ref{Theorem-main}}

Combining the results of subsections \ref{sec:recon-decay}, \ref{sec:recon-smooth}, and \ref{sec:recon-time} we can now prove the main theorem.

\begin{proof}[Proof of Theorem \ref{Theorem-main}]
Given initial data $q_0 \in H^{2,2}(\R)$, the direct scattering map has the continuity properties asserted in Proposition \ref{post lip prop}, so that the time-evolved scattering data has the continuity properties asserted in Proposition \ref{scattering-continuous}.
By Theorem  \ref{thm:lambda-unique}, RHP \ref{RHP.lambda} is uniquely solvable for each $x,t$, and by Theorems \ref{thm:L^2-decay} and \ref{thm:L^2-smooth}, the map from scattering data $J(\cdot,t)$ to reconstructed potential $q(\cdot,t)$ is Lipschitz continuous into $H^{2,2}(\R)$. The map $(q_0,t) \mapsto q(x,t)$ defined by the composition of the direct scattering map, the flow map, and the inverse scattering map (i) maps $(q_0,0)$ to $q_0$ (by standard arguments which we omit here), (ii) is jointly continuous in $(q_0,t)$ (by the continuity of the direct and inverse maps),  and (iii) is locally Lipschitz continuous in $q_0$ (by the Lipschitz continuity asserted in Theorems \ref{thm:L^2-decay} and \ref{thm:L^2-smooth}). Finally, it follows from Proposition \ref{prop:q-solves} that $q(x,t)$ is a weak solution of \eqref{DNLS2}. 
\end{proof}

\section*{Acknowledgements}

We  thank  Deniz Bilman  and  Peter Miller  for helpful discussions on Zhou's ideas.
This work was supported by a grant from the Simons Foundation/SFARI (359431, PAP).
CS  is supported in part by  Discovery Grant 2018-04536 from the Natural Sciences and Engineering Research Council of Canada.



\appendix

\section{Sobolev Spaces on Self-Intersecting Contours}
\label{app:Sobolev}

In this Appendix, we define the Sobolev spaces $H^k_\pm(\Gamma)$ and $H^k_z(\Gamma)$ needed for the analysis of RHP \ref{RHP.lambda}. These spaces were  introduced by Zhou \cite{Zhou89-1};
see \cite[\S 2.6--2.7]{TO16} for a  \bluetext{ discussion on} 
 their role in the analysis of Beals-Coifman integral equations associated to RHP's.  

If $\Gamma = \Gamma_1 \cup \ldots \cup \Gamma_n$ and the $\Gamma_i$ are either half-lines, line segments, or arcs, the space $H^k(\Gamma)$ consists of  functions $f$ on $\Gamma$ with the property that $\left. f \right|{\Gamma_i} \in H^k(\Gamma_i)$. The space $H^k(\Gamma_i)$ \bluetext{ is well defined} 
since each $\Gamma_i$ can be parameterized by arc length and functions on $\Gamma_i$ viewed as functions on a subset of $\R$.  A function $f \in H^k(\Gamma_i)$ has a representative which is continuous, together with its derivatives $f^{(j)}$ up to order $k-1$. Limits of $f^{(j)}$ at the endpoints of  $\Gamma_i$ are well-defined for $0 \leq j \leq k-1$. The space $H^k_+(\Gamma)$ (resp.\ $H^k_-(\Gamma)$) consists of  the  functions of $H^k(\Gamma)$ which are continuous together with their derivatives up to order $k-1$ along the solid and dashed components shown in Figure \ref{fig-Gamma+-comp} (resp.\ Figure \ref{fig-Gamma--comp}). 

\begin{center}
\begin{figure}[H]
\begin{subfigure}{0.45\textwidth}
\caption{Boundary components of $\Omega^+$}
\bigskip
\begin{tikzpicture}[scale=0.47]
\draw[white]						(-6,0)		--	(-6,6)		--	(6,6)	--	(6,0);
\draw[white,fill=gray!10]		(-6,0)		--	(-3,0)		-- 	(-3,0) arc(-180:0:3)	--	(6,0)	--
										(6,-6)		--	(-6,-6)	--	(-6,0);
\draw[white,fill=gray!10]		(-3,0)	 arc(180:0:3)	--	(-3,0);
\draw[black, fill=black] 		(3,0) 		circle [radius=0.15];
\draw [black, fill=black] 		(-3,0) 	circle [radius=0.15];
\draw[thick,->-]		(-3,0) arc(180:0:3);
\draw[thick,dashed,->-]		(-3,0)	 arc(180:360:3);
\draw [thick,->-] 					(-6,0) -- (-3,0);
\draw [thick,->-,dashed]		(3,0)	--	(-3,0);
\draw [thick,->-] 					(3,0) -- 	(6,0);
\node [below] at (0,-0.5) 		{$\Omega^+$};
\node [above] at (0,0.5) 		{$\Omega^-$};
\node [below] at (4,-3) 		{$\Omega^-$};
\node [above] at (4, 3) 		{$\Omega^+$};
\node[below] at (2.3,0)		{\footnotesize{$S_\infty$}};
\node[below] at (-2.1,0)		{\footnotesize{$-S_{\infty}$}};
\end{tikzpicture}
\label{fig-Gamma+-comp}
\end{subfigure}
\qquad
\begin{subfigure}{0.45\textwidth}
\caption{Boundary components of $\Omega^-$}
\bigskip
\begin{tikzpicture}[scale=0.47]
\draw[white]						(-6,0)		--	(-6,-6)		--	(-6,6)		--	(-6,0);
\draw[white,fill=gray!10]		(-6,0)		--	(-3,0)		-- 	(-3,0) arc(180:0:3)	--	(6,0)	--
										(6,6)		--	(-6,6)		--	(-6,0);
\draw[white,fill=gray!10]		(-3,0)	 arc(-180:0:3)	--	(-3,0);
\draw[black, fill=black] 		(3,0) 		circle [radius=0.15];
\draw [black, fill=black] 		(-3,0) 	circle [radius=0.15];
\draw[thick,->-,dashed]		(-3,0) arc(180:0:3);
\draw[thick,->-]					(-3,0)	 arc(180:360:3);
\draw [thick,->-] 					(-6,0) -- (-3,0);
\draw [thick,->-,dashed]		(3,0)	--	(-3,0);
\draw [thick,->-] 					(3,0) -- 	(6,0);
\node [below] at (0,-0.5) 		{$\Omega^+$};
\node [above] at (0,0.5) 		{$\Omega^-$};
\node [below] at (4,-3) 		{$\Omega^-$};
\node [above] at (4, 3) 		{$\Omega^+$};
\node[below] at (2.3,0)		{\footnotesize{$S_\infty$}};
\node[below] at (-2.1,0)		{\footnotesize{$-S_{\infty}$}};
\end{tikzpicture}
\label{fig-Gamma--comp}
\end{subfigure}
\caption{Boundary Components of $\Omega^\pm$}
\end{figure}
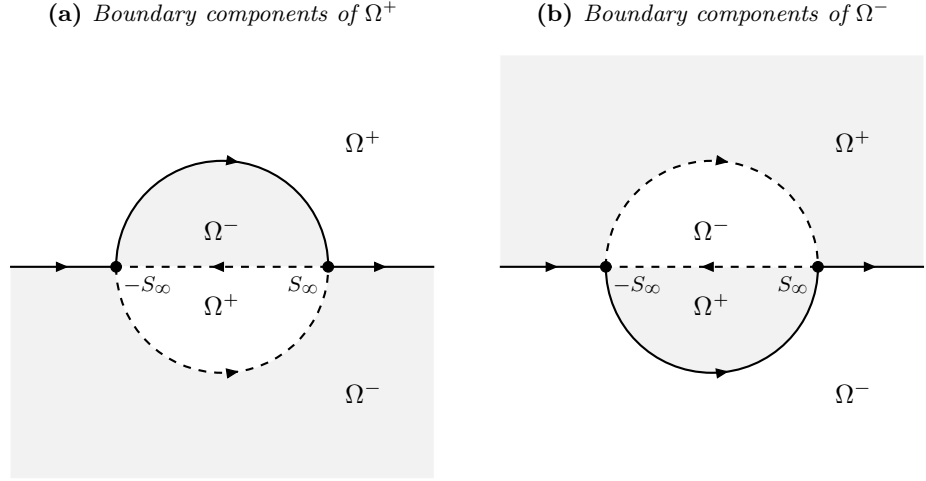
\end{center}
To describe the continuity conditions, let 
$$(f_i^{j})_\pm = \lim_{z \rarr S_{\pm\infty}, z \in \Gamma_i} f^{(j)}(z)$$ 
where the contours $\Gamma_i$ are as shown in Figure \ref{fig-zero-sum} \bluetext{ (a)--(b)}. 
A function $f \in H^k_+(\Gamma)$ obeys the conditions
\begin{equation}
\label{Hk+}
(f^j_1)_- 	= 	(f^j_2)_-,	\quad	(f^j_3)_- 	= 	(f^j_4)_-, \qquad\quad
(f^j_2)_+	=	\bluetext{ (f^j_1)_+},	\quad	(f^j_3)_+	=	(f^j_4)_+
\end{equation}
for $0 \leq j \leq k-1$, where in each case the first condition comes from continuity across the solid contour, and the second from continuity across the dashed contour.
Similarly, a function $f \in H^k_-(\Gamma)$ obeys the conditions
\begin{equation}
\label{Hk-}
(f^j_1)_- 	= 	(f^j_3)_-,	\quad	(f^j_2)_- 	= 	(f^j_4)_-, \qquad\quad
(f^j_3)_+	=	\bluetext{ (f^j_1)_+},	\quad	(f^j_2)_+	=	(f^j_4)_+
\end{equation}
for $0 \leq j \leq k-1$. 

The space $H^k_z(\Gamma)$ consists of those  functions in $H^k(\Gamma)$ which obey the following zero-sum conditions at the two intersection points $\pm S_\infty$.
\begin{equation}
\label{Hk0}
\begin{aligned}
(f_1^{j})_-  + (f_4^{j})_-  - (f_2^{j})_- - (f_3^{j})_- 		&= 0	\\
(f_2^{j})_+  + (f_3^{j})_+  -  (f_4^{j})_+ -  \bluetext{  (f_1^{j})_+	}	& = 0
\end{aligned}
\end{equation}
where the $\pm$ signs are determined by the orientation of the contour as indicated in Figures
\ref{fig-Gamma+-comp} and \ref{fig-Gamma--comp}.

It is easy to see from \eqref{Hk+}, \eqref{Hk-}, and \eqref{Hk0} that $H^k_\pm(\Gamma) \subset H^k_z(\Gamma)$. In \cite[Lemma 2.51]{TO16}, it is shown that if $f \in H^k_z(\Gamma)$, then the Cauchy projectors $C^\pm_\Gamma f \in H^k_\pm(\Gamma)$.  This mapping property is very natural since $C^+_\Gamma f$ (resp.\ $C^-_\Gamma f$) is the boundary value of a function analytic in $\Omega_+$ (resp.\ $\Omega^-$). It  follows 
that $$H^k_z(\Gamma) = H^k_+(\Gamma)  + H^k_-(\Gamma)$$ 
with the decomposition given by  $f = C_\Gamma^+(f) + (-C_\Gamma^-(f))$.

\section{The Continuity-Compactness Argument}
\label{app-cc}

In this appendix, we give the abstract functional-analytic argument needed to prove uniform resolvent estimates required in section \ref{sec:inverse} for the Lipschitz continuity of the inverse scattering map. Proposition \ref{prop:cc} can also be used to simplify proofs of analogous uniform estimates in \cite{JLPS17a,LPS15}. In what follows $\scrB(X)$ denotes the Banach space of bounded operators on the Banach space $X$.

\begin{proposition}
\label{prop:cc}
Let $X$, $Y$, and $Z$ be Banach spaces and suppose that there is a continuous embedding $i:Z \rarr Y$ with the property that bounded subsets of $Z$ map to precompact subsets of $Y$.  Suppose that $C_{J,x}$ is a family of 
bounded
operators on a Banach space $X$ indexed by $J \in Y$ and $x \in \R$. Finally, suppose that:
\begin{itemize}
\item[(i)]		The map $(J,x) \mapsto C_{J,x}$ is continuous as a map
				from $Y \times \R$ into $\scrB(X)$, and the estimate
				$$ \sup_{x \in \R} 
						\norm[\scrB(X)]{C_{J,x} - C_{J',x}} 
				\lesssim 
				\norm[Y]{J-J'}
				$$
				holds,
\item[(ii)]		The resolvent $(I - C_{J,x})^{-1}$ exists for each $x \in \R$ and 
				$J \in Y$, and
\item[(iii)]	For each $J \in Y$, the estimate
				$$ \sup_{x \in [a,\infty)} \norm[\scrB(X)]{(I-C_{J,x})^{-1}} < \infty $$
				holds.
\end{itemize} 
Then for any bounded subset $B$ of $Z$, 
$$\sup_{J \in B} \left( \sup_{x \in [a,\infty)} \norm[\scrB(X)]{(I-C_{J,x})^{-1}} \right) < \infty $$
and the map 
$$ J \mapsto \left\{ x \mapsto (I-C_{J,x})^{-1} \right\} $$
is locally Lipschitz continuous as a map from $Z$ into $C([a,\infty);\scrB(X))$.
\end{proposition}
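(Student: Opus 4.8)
The plan is to establish the uniform resolvent bound first, by a compactness-and-contradiction argument that exploits the precompactness of bounded subsets of $Z$ inside $Y$, and then to deduce the local Lipschitz continuity essentially for free from the second resolvent identity.

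For the uniform bound over a bounded set $B \subset Z$, I would argue by contradiction: suppose there are $J_n \in B$ and $x_n \in [a,\infty)$ with $\norm[\scrB(X)]{(I-C_{J_n,x_n})^{-1}} \to \infty$. Since $B$ is bounded in $Z$, the hypothesis on $i$ makes $\{i(J_n)\}$ precompact in $Y$, so after passing to a subsequence I may assume $J_n \to J_\ast$ in $Y$ for some $J_\ast \in Y$. Hypothesis (iii) gives $M := \sup_{x \geq a}\norm[\scrB(X)]{(I-C_{J_\ast,x})^{-1}} < \infty$ (necessarily $M>0$), and hypothesis (i) gives $\sup_{x\in\R}\norm[\scrB(X)]{C_{J_n,x}-C_{J_\ast,x}} \lesssim \norm[Y]{J_n-J_\ast}\to 0$, so for $n$ large this supremum is below $1/(2M)$. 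Then I would use the factorization
$$ I - C_{J_n,x} = (I-C_{J_\ast,x})\Bigl( I - (I-C_{J_\ast,x})^{-1}(C_{J_n,x}-C_{J_\ast,x})\Bigr), $$
observe that the operator in parentheses has norm $\leq 1/2$ and hence is invertible by Neumann series with inverse of norm $\leq 2$, and conclude $\norm[\scrB(X)]{(I-C_{J_n,x})^{-1}} \leq 2M$ uniformly in $x \geq a$ for all large $n$---contradicting the choice of $(J_n,x_n)$. This yields $M_B := \sup_{J\in B}\sup_{x\geq a}\norm[\scrB(X)]{(I-C_{J,x})^{-1}} < \infty$.

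For the local Lipschitz statement I would fix a point of $Z$ and a bounded neighborhood $B$ of it with constant $M_B$ as above. For $J,J'\in B$ the second resolvent identity
$$ (I-C_{J,x})^{-1} - (I-C_{J',x})^{-1} = (I-C_{J,x})^{-1}(C_{J,x}-C_{J',x})(I-C_{J',x})^{-1} $$
combined with hypothesis (i) and the boundedness of $i$ gives $\sup_{x\geq a}\norm[\scrB(X)]{(I-C_{J,x})^{-1}-(I-C_{J',x})^{-1}} \lesssim M_B^2\,\norm[Z]{J-J'}$. Applying the same identity with $J$ fixed and $x$ varying, together with the continuity of $x\mapsto C_{J,x}$ from (i) and the uniform bound, shows $x\mapsto(I-C_{J,x})^{-1}$ is continuous and bounded on $[a,\infty)$, so it is a genuine element of $C([a,\infty);\scrB(X))$ with the supremum norm; the previous estimate is then exactly the claimed local Lipschitz bound for $J\mapsto\{x\mapsto(I-C_{J,x})^{-1}\}$.

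The one delicate point---and the reason the three-space formalism is needed---is that the subsequential limit $J_\ast$ produced by precompactness need not belong to $i(Z)$, only to $Y$; this is precisely why hypotheses (ii) and (iii) are imposed for every $J\in Y$ and not merely for scattering data coming from potentials, and it is the only step where the gap between $Y$ and $Z$ is genuinely exploited. Everything else is routine operator-theoretic bookkeeping with Neumann series and the resolvent identity.
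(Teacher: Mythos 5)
Your proof is correct and rests on exactly the same three ingredients as the paper's: the compact embedding $i:Z\to Y$, the pointwise-in-$J$ bounds (ii)--(iii), and a Neumann-series/second-resolvent perturbation giving continuity of $J\mapsto\{x\mapsto(I-C_{J,x})^{-1}\}$ in the sup-over-$x$ norm. The only difference is presentational: the paper phrases the uniform bound as ``a continuous map sends the precompact set $i(B)$ to a set with compact closure, hence bounded,'' whereas you unpack that same compactness argument as a subsequence-and-contradiction argument; both are fine.
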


\begin{remark}
1. In applications, (i) is easy to prove from the explicit form of the Beals-Coifman integral operators, (ii) follows from Fredholm theory and a vanishing theorem for the RHP, and (iii) follows from the continuity of the map $$x \mapsto (I-C_{J,x})^{-1}$$ and the fact that, in the limit $x \rarr \infty$, the integral kernel of the operator is highly oscillatory. 
2. In applications, the bound in hypothesis (iii) is typically only true for half-lines. One can replace $[a,\infty)$ by $(-\infty,a]$ and obtain the same result.
\end{remark}

\begin{proof}
Denote by $C([a,\infty),\scrB(X))$ the Banach space of continuous $\scrB(X)$-valued functions of $x \in [a,\infty)$ equipped with the norm
$$ \norm[C([a,\infty), \scrB(X))]{f} = \sup_{x \in \R} \norm[\scrB(X)]{f(x)}. $$
Consider the map 
\begin{equation}
\label{Y-res}
Y \ni J \mapsto \left(  x \mapsto (I-C_{J,x})^{-1} \right) \in C([a,\infty),\scrB(X)).
\end{equation}
Assumptions (i), (ii), (iii) and the second resolvent formula show that this map is well-defined and continuous. Using the injection $i$ we can identify bounded subsets of $Z$ with precompact subsets of $Y$. We can then use the continuity of the map \eqref{Y-res} to conclude that the image of any bounded subset of $Z$ has compact closure in $C([a,\infty),\scrB(X))$
 and hence is bounded. 
 The local Lipschitz continuity now follows from the
 identity 
 $$ 
 (I - C_{J,x})^{-1} - (I- C_{J',x})^{-1} = 
 (I - C_{J,x})^{-1} 
 	\left(C_{J',x} - C_{J,x}\right) 
 (I- C_{J',x})^{-1} 
 $$
 (the ``second resolvent formula'')
 owing to the uniform bounds.

\end{proof}

%
%

\end{document}